\renewcommand{\@seccntformat}[1]{{\csname the#1\endcsname}.\hspace{.5em}}
\newtheorem{thm}{Theorem}[section]
\newtheorem{prob}[thm]{Problem}
\newtheorem{cor}[thm]{Corollary}
\newtheorem{conj}[thm]{Conjecture}
\newtheorem{lem}[thm]{Lemma}
\newcommand{\pf}{\noindent{\it Proof.} }
\def\Q{\mathbb Q}
\renewcommand{\qed}{\hfill$\Box$\medskip}
\numberwithin{equation}{section}
\begin{document}

\renewcommand{\thefootnote}{*}

\begin{center}
{\Large\bf Some $q$-analogues of (super)congruences of Beukers,\\[5pt] Van Hamme and Rodriguez-Villegas}
\end{center}

\vskip 2mm \centerline{Victor J. W. Guo$^1$  and Jiang Zeng$^{2}$}
\begin{center}
{\footnotesize $^1$Department of Mathematics, Shanghai Key Laboratory of PMMP,
East China Normal University,\\ 500 Dongchuan Road, Shanghai 200241,
 People's Republic of China\\
{\tt jwguo@math.ecnu.edu.cn,\quad htap://math.ecnu.edu.cn/\textasciitilde{jwguo}}\\[10pt]
$^2$Universit\'e de Lyon; Universit\'e Lyon 1; Institut Camille
Jordan, UMR 5208 du CNRS;\\ 43, boulevard du 11 novembre 1918,
F-69622 Villeurbanne Cedex, France\\
{\tt zeng@math.univ-lyon1.fr,\quad
htap://math.univ-lyon1.fr/\textasciitilde{zeng}} }
\end{center}

\vskip 0.7cm \noindent{\small{\bf Abstract.}}
For any odd prime $p$ we obtain $q$-analogues of Van Hamme's supercongruence:
$$
\sum_{k=0}^{\frac{p-1}{2}}{2k\choose k}^3\frac{1}{64^k} \equiv 0
\pmod{p^2} \quad\text{for}\quad p\equiv 3\pmod 4,
$$
and Rodriguez-Villegas' Beukers-like supercongruences involving products of three binomial coefficients.
For example, we prove that
\begin{align*}
\sum_{k=0}^{\frac{p-1}{2}} {2k\brack k}_{q^2}^3 \frac{q^{2k}}{(-q^2;q^2)_k^2 (-q;q)_{2k}^2}
&\equiv 0\pmod{[p]^2}  \quad\text{for}\quad p\equiv 3\pmod 4,  \\
\sum_{k=0}^{\frac{p-1}{2}}{2k\brack k}_{q^3}\frac{(q;q^3)_k (q^{2};q^3)_{k} q^{3k} }{ (q^{6};q^{6})_k^2 }
&\equiv 0 \pmod{[p]^2}\quad\text{for}\quad p\equiv 2\pmod{3},
\end{align*}
where $[p]=1+q+\cdots+q^{p-1}$, $(a;q)_n=(1-a)(1-aq)\cdots(1-aq^{n-1})$, and ${n\brack k}_q$ denotes the $q$-binomial coefficient.
Actually, our results
give  $q$-analogues of Z.-H. Sun's and Z.-W. Sun's generalizations of the above Beukers-like supercongruences.
Our proof uses the theory of basic hypergeometric series including a new $q$-Clausen-type summation formula.

\vskip 3mm \noindent {\it Keywords}: congruences, supercongruences, basic hypergeometric series, $q$-binomial theorem, $q$-Chu-Vandermonde formula, the Lagrange interpolation,
the Newton interpolation

\vskip 3mm \noindent {\it 2000 Mathematics Subject Classifications}: Primary 11B65, Secondary 05A10, 05A30

\section{Introduction}
In 1985, Beukers \cite{Beukers} made the following conjecture: for any odd prime $p$,
\begin{align}
&\hskip -2mm\sum_{k=0}^{\frac{p-1}{2}}(-1)^k{\frac{p-1}{2}\choose k} {2k\choose k}^2\frac{1}{16^k} \notag \\
&\equiv
\begin{cases}
4x^2-2p \pmod{p^2}, &\text{if $p=x^2+y^2$ with $x$ odd,}\\
0\pmod{p^2}, &\text{if $p\equiv 3\pmod 4$}
\end{cases}  \pmod{p^2}.\label{eq:beukers-0}
\end{align}
Beukers himself proved this congruence modulo $p$, which is equivalent to
\begin{align}
\sum_{k=0}^{\frac{p-1}{2}}{2k\choose k}^3\frac{1}{64^k}
\equiv
\begin{cases}
4x^2 \pmod{p}, &\text{if $p=x^2+y^2$ with $x$ odd,}\\
0\pmod{p}, &\text{if $p\equiv 3\pmod 4$}
\end{cases}  \pmod{p}.\label{eq:beukers}
\end{align}
 Complete proofs of \eqref{eq:beukers-0}
have been given by Ishikawa \cite{Ishikawa}, Ahlgren \cite{Ahlgren}, and Mortenson \cite{Mortenson5}.
It is interesting that Van Hamme \cite{Hamme} proved the following company congruence of \eqref{eq:beukers-0}.
\begin{align}
\sum_{k=0}^{\frac{p-1}{2}}{2k\choose k}^3\frac{1}{64^k}
\equiv
\begin{cases}
4x^2-2p \pmod{p^2}, &\text{if $p=x^2+y^2$ with $x$ odd,}\\
0\pmod{p^2}, &\text{if $p\equiv 3\pmod 4$}
\end{cases}  \pmod{p^2}.\label{eq:beukers-vanhamme}
\end{align}
Finite fields analogues of classical hypergeometric series \cite{Greene}
play important parts in \cite{Ahlgren, AO, Mortenson5}.

Motivated by his joint work \cite{COR} with Candelas and de la Ossa on Calabi-Yau manifolds over finite fields,
Rodriguez-Villegas \cite{RV} discovered numerically some Beukers-like supercongruences, such as, for
any prime $p>3$,
\begin{align}
\sum_{k=0}^{p-1}{3k\choose k}{2k\choose k}^2\frac{1}{108^k}
&\equiv 0\pmod{p^2}\quad\text{if}\quad p\equiv 2\pmod{3}, \label{eq:RV-B1}\\
\sum_{k=0}^{p-1}{4k\choose k}{2k\choose k}^2\frac{1}{256^k}
&\equiv 0\pmod{p^2}\quad\text{if}\quad p\equiv 5,7\pmod{8}, \label{eq:RV-B2} \\
\sum_{k=0}^{p-1}{6k\choose 3k}{3k\choose k}{2k\choose k}\frac{1}{1728^k}
&\equiv 0\pmod{p^2}\quad\text{if}\quad p\equiv 3\pmod{4}.  \label{eq:RV-B3}
\end{align}
Note that Rodriguez-Villegas \cite{RV} made  conjectures concerning the numbers in \eqref{eq:RV-B1}--\eqref{eq:RV-B3} for general $p$,
which were finally proved by Mortenson \cite{Mortenson5} and Z.-W. Sun \cite{Sun0}.
Recently, by using the properties of generalized Legendre polynomials,
Z.-H. Sun \cite{SunZH2}, among other things, proved the following result.
\begin{thm} {{\rm\cite[Theorem~2.5]{SunZH2}}}
Let $p$ be an odd prime and let $a$ be a $p$-adic integer with $\langle a\rangle_p\equiv 1\pmod 2$. Then
\begin{align}
\sum_{k=0}^{p-1}{2k\choose k}{a\choose k}{-1-a\choose k}\frac{1}{4^k}\equiv 0 \pmod{p^2}, \label{eq:sun-lengendre-1}
\end{align}
where $\langle a \rangle_p$ denotes the least nonnegative residue of $a$ modulo $p$.
\end{thm}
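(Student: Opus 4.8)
The plan is to recognize the left-hand side of \eqref{eq:sun-lengendre-1} as a truncated ${}_3F_2$ and to exploit a Clausen-type factorization. First I would put the summand into hypergeometric form: since $\binom{2k}{k}4^{-k}=(\tfrac12)_k/k!$ and $\binom{-1-a}{k}=(-1)^k\binom{a+k}{k}$, one has $\binom{a}{k}\binom{-1-a}{k}=(-a)_k(1+a)_k/(k!)^2$, so the sum becomes
$$\sum_{k=0}^{p-1}\frac{(\tfrac12)_k\,(-a)_k\,(1+a)_k}{(k!)^3},$$
a truncation of ${}_3F_2(\tfrac12,-a,1+a;1,1;1)$. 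This is invariant under $a\mapsto -1-a$, matching the fact that the hypothesis $\langle a\rangle_p\equiv 1\pmod 2$ is preserved by $a\mapsto -1-a$ (as $p-1$ is even).

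The guiding principle is Clausen's formula, which gives, as an identity of series,
$${}_3F_2\!\left(\tfrac12,-a,1+a;1,1;1\right)={}_2F_1\!\left(-\tfrac a2,\tfrac{1+a}2;1;1\right)^{2},$$
and by Gauss's summation the inner ${}_2F_1$ equals $\Gamma(\tfrac12)/\bigl(\Gamma(1+\tfrac a2)\,\Gamma(\tfrac{1-a}2)\bigr)$, which vanishes exactly when $(1-a)/2$ is a nonpositive integer, i.e. when $a$ is an odd positive integer. Thus the full sum is a perfect square that degenerates to $0$ under an oddness condition on $a$, and the theorem is the $\bmod\,p^2$ refinement of this vanishing. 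This is precisely why a terminating $q$-Clausen-type summation is the right engine: I would prove a finite $q$-analogue of Clausen's identity expressing the truncated $q$-series as the square (up to an explicit factor) of a shorter $q$-hypergeometric sum, and then specialize $q\to1$.

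Concretely I would proceed in three steps. (i) Prove the new terminating $q$-Clausen formula and use it to write the truncated sum, modulo $[p]^2$, as the square of the truncated ${}_2F_1$-type sum $\sum_k(-a/2)_k(\tfrac{1+a}2)_k/(k!)^2$ (in its $q$-form); the point is that Clausen's product structure must survive truncation up to a tail divisible by $[p]^2$. (ii) Evaluate the inner sum by the $q$-Chu--Vandermonde / $q$-Gauss summation; the resulting closed form carries a factor that is $\equiv 0\pmod{[p]}$ exactly when $\langle a\rangle_p$ is odd, which is where the parity hypothesis is consumed. (iii) Squaring then yields divisibility by $[p]^2$, whence \eqref{eq:sun-lengendre-1} follows on letting $q\to1$, since $[p]\to p$.

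I expect the main obstacle to be step (i): establishing the terminating $q$-Clausen identity and, above all, controlling the truncation so that the factorization holds modulo $[p]^2$ rather than merely modulo $[p]$. Matching the summation range of the ${}_3F_2$-truncation (up to $k=p-1$) with the Cauchy square of the ${}_2F_1$-truncation, and proving that the omitted cross terms are divisible by the \emph{square} of the cyclotomic factor, is the delicate bookkeeping on which the supercongruence — as opposed to the mere congruence modulo $p$ — ultimately rests.
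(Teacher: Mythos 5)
Your guiding idea --- factor the truncated sum, Clausen-style, into pieces each divisible by $[p]$, with the parity of $\langle a\rangle_p$ controlling the vanishing --- is indeed the engine of the paper: \eqref{eq:sun-lengendre-1} is obtained there by letting $s=0$, $-\frac{r}{m}=a$ and $q\to 1$ in Theorem~\ref{thm:2k2k2k-2}, whose proof is exactly such a factorization. But your step (i), which you yourself flag as the main obstacle, is a genuine gap, and it is where the whole theorem lives: you need the truncated ${}_3F_2$ to agree modulo $[p]^2$ with the square of a truncated ${}_2F_1$, and ``bookkeeping of cross terms'' is not a method --- no argument is offered for why the omitted terms should be divisible by $[p]^2$. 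The paper never proves any truncated-Clausen congruence of this kind; it sidesteps truncation control altogether. The congruence of Lemma~\ref{lem:one} first rewrites the sum over $0\leqslant k\leqslant\frac{p-1}{2}$, modulo $[p]^2$, into a form to which the identity of Theorem~\ref{thm:important-thm} (with $q\to q^m$, $x=q^r$, $n=\frac{p-1}{2}$) applies \emph{exactly}; the factorization step is an identity, not a congruence, so there are no tails or cross terms to estimate. (Passing from the range $\frac{p-1}{2}$ to $p-1$ in \eqref{eq:new-2mpp} is then a separate, easy observation that the extra summands are individually $\equiv 0\pmod{[p]^2}$.)

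Second, the factorization that actually works is not the perfect square you propose. Theorem~\ref{thm:important-thm} factors the sum into a product of two \emph{different} conjugate sums, one attached to $x=q^r$ (i.e.\ to the parameter $-a$) and one to $q/x=q^{m-r}$ (i.e.\ to $1+a$); no halving of parameters occurs. This matters twice over. In the $q$-setting, the halved parameters $-\tfrac a2,\tfrac{1+a}{2}$ force $q^{1/2}$ into the formula --- compare Jackson's $q$-analogue of Clausen's formula quoted after Theorem~\ref{thm:important-thm} --- and arithmetic modulo $[p]$ does not tolerate $q^{1/2}$; the raison d'\^etre of the paper's new formula is precisely to have a Clausen-type identity free of square roots. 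Moreover, the vanishing modulo $[p]$ of each factor is obtained not from $q$-Gauss/$q$-Chu--Vandermonde, as in your step (ii), but from Andrews' terminating $q$-analogue of Watson's formula \eqref{eq:andrews} via Lemma~\ref{thm:factor-1}: the parity hypothesis is consumed by the built-in ``$0$ if $n$ is odd'' case of \eqref{eq:andrews}, together with the relation $\langle -\frac{r}{m}\rangle_p+\langle -\frac{m-r}{m}\rangle_p=p-1$, which guarantees that \emph{both} conjugate factors vanish simultaneously. So while you have identified the right kind of tool, your plan would have to be reshaped into the paper's form --- the Lemma~\ref{lem:one} rewriting followed by an exact, unhalved, product-of-conjugates identity --- before it could close; as written, step (i) is an unproved (and differently shaped) assertion rather than a proof outline.
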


It is not difficult to see that, taking $a=-\frac{1}{2}, -\frac{1}{3}, -\frac{1}{4}, -\frac{1}{6}$ in \eqref{eq:sun-lengendre-1},
we obtain the $p\equiv 3\pmod{4}$ case of \eqref{eq:beukers}, and \eqref{eq:RV-B1}--\eqref{eq:RV-B3}.
The first aim of this paper is to give a $q$-analogue of \eqref{eq:beukers-vanhamme} and \eqref{eq:sun-lengendre-1}.

On the other hand, for any prime $p\geqslant 5$, Mortenson \cite{Mortenson1,Mortenson4} proved  the following four
supercongruences conjectured by Rodriguez-Villegas\cite[(36)]{RV}:
\begin{align}
\sum_{k=0}^{p-1}{2k\choose k}^2\frac{1}{16^k}
&\equiv \left(\frac{-1}{p}\right)\pmod{p^2}, \label{eq:RV1} \\
\sum_{k=0}^{p-1}{3k\choose 2k}{2k\choose k}\frac{1}{27^k}
&\equiv \left(\frac{-3}{p}\right)\pmod{p^2}, \label{eq:RV2} \\
\sum_{k=0}^{p-1}{4k\choose 2k}{2k\choose k}\frac{1}{64^k}
&\equiv \left(\frac{-2}{p}\right)\pmod{p^2}, \label{eq:RV3} \\
\sum_{k=0}^{p-1}{6k\choose 3k}{3k\choose k}\frac{1}{432^k}
&\equiv \left(\frac{-1}{p}\right)\pmod{p^2}, \label{eq:RV4}
\end{align}
where $(\frac{\cdot}{p})$ denotes the Legendre symbol modulo $p$. Z.-H. Sun~\cite{SunZH2} gave
an elementary proof of \eqref{eq:RV1}--\eqref{eq:RV4} by using generalized Legendre polynomials. See also
\cite{SunZH1,Tauraso,Tauraso1} for several simple proofs of \eqref{eq:RV1}.
Z.-W. Sun \cite[(1.4)]{Sun13} obtained the following generalization of \eqref{eq:RV1}:
\begin{align}
\sum_{k=0}^{\frac{p-1}{2}}\frac{{2k\choose k}{2k+2s\choose k+s}}{4^{2k+s}}
\equiv \left(\frac{-1}{p}\right) \pmod{p^2},\quad\text{for}\ 0\leqslant s\leqslant \frac{p-1}{2}, \label{eq:sun13}
\end{align}
where $(\frac{\cdot}{p})$ denotes the Legendre symbol modulo $p$. Note that McCarthy and Osburn \cite{MO}
have studied some related interesting supercongruences.

Recall that  the {\it $q$-shifted factorials} are defined by $(a;q)_0=1$ and
$$(a;q)_n=(1-a)(1-aq)\cdots(1-aq^{n-1})\ \text{for}\ n=1,2,\ldots,$$
and the {\it $q$-integer} is defined as $[p]=\frac{1-q^p}{1-q}$.
The {\it $q$-binomial coefficients} ${n\brack k}$ are then defined by
$$
{n\brack k}={n\brack k}_q
=\begin{cases}
\displaystyle\frac{(q^{n-k+1};q)_k}{(q;q)_k}, &\text{if $0\leqslant k\leqslant n$},\\[10pt]
0,&\text{otherwise.}
\end{cases}
$$

In the last decade, several authors have studied $q$-analogues of congruences and supercongruences,
see \cite{Andrews99,Pan,SP,Straub,Tauraso2}.
Throughout the paper we will often use the fact that for any prime $p$, the $q$-integer
$[p]$ is always an irreducible polynomial in $\Q[q]$.
Namely, $\Q[q]/[p]$ is a field.
Therefore, rational functions $a(q)/b(q)$ are
well defined modulo $[p]$ or $[p]^r$ ($r\geqslant 1$) on condition that $b(q)$ is relatively prime to $[p]$.
In a previous paper~\cite{GZ}, we give
some $q$-analogues of \eqref{eq:RV1} and
partial $q$-analogues of \eqref{eq:RV2}--\eqref{eq:RV4} such as
\begin{align}
\sum_{k=0}^{p-1}\frac{(q;q^2)_k^2}{(q^2;q^2)_k^2}
&\equiv \left(\frac{-1}{p}\right)q^{\frac{1-p^2}{4}}\pmod{[p]^2}, \label{eq:q-RV1}  \\
\sum_{k=0}^{p-1}\frac{(q;q^3)_k (q^2;q^3)_k}{(q^3;q^3)_k^2}
&\equiv \left(\frac{-3}{p}\right)q^{\frac{1-p^2}{3}}\pmod{[p]}, \label{eq:q-RV2} \\
\sum_{k=0}^{p-1}\frac{(q;q^4)_k (q^3;q^4)_k}{(q^4;q^4)_k^2}
&\equiv \left(\frac{-2}{p}\right)q^{\frac{3(1-p^2)}{8}}\pmod{[p]}, \label{eq:q-RV3} \\
\sum_{k=0}^{p-1}\frac{(q;q^6)_k (q^5;q^6)_k}{(q^6;q^6)_k^2}
&\equiv \left(\frac{-1}{p}\right)q^{\frac{5(1-p^2)}{12}}\pmod{[p]}. \label{eq:q-RV4}
\end{align}

The second aim of this paper is to give a $q$-analogue of \eqref{eq:sun13}  and a
generalization of \eqref{eq:q-RV1}--\eqref{eq:q-RV4}.

\section{The main results}
Motivated by Z.-W. Sun's generalization \cite[Theorem 1.1(i)]{Sun0} of Van Hamme' congruence \eqref{eq:beukers-vanhamme}:
\begin{align}
\sum_{k=0}^{\frac{p-1}{2}}{2k\choose k}^2{2k\choose k+s}\frac{1}{64^k}
\equiv 0 \pmod{p^2},  \label{eq:BS}
\end{align}
where $0\leqslant s\leqslant \frac{p-1}{2}$ and $s\equiv\frac{p+1}{2}\pmod{2}$, we give a unified $q$-analogue
of \eqref{eq:beukers-vanhamme} and \eqref{eq:BS} as follows.
\begin{thm}\label{thm:main1}
Let $p$ be an odd prime and $0\leqslant s\leqslant \frac{p-1}{2}$. Then we have the following congruence modulo $[p]^2${\rm:}
\begin{align}
&\hskip -2mm\sum_{k=0}^{\frac{p-1}{2}} {2k\brack k}_{q^2}^2{2k\brack k+s}_{q^2} \frac{q^{2k}}{(-q^2;q^2)_k^2 (-q;q)_{2k}^2}  \nonumber\\
&\equiv \begin{cases}
\displaystyle (-1)^s q^{\frac{p-1}{2}-s^2}
{\frac{p-1}{2}\brack \frac {p-2s-1}{4}}_{q^4}^2
\frac{(q^2;q^2)_{\frac{p-2s-1}{2}}(q^2;q^2)_{\frac{p+2s-1}{2}}}{(q^4;q^4)_{\frac{p-1}{2}}^2},
&\text{if $s\equiv \frac{p-1}{2}\pmod 2$,} \\[5pt]
0, &\text{otherwise.}
\end{cases}\label{eq:2.2}
\end{align}
\end{thm}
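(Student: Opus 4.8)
The plan is to recast the left-hand side as a terminating basic hypergeometric series and then to linearise the squared $q$-binomial factor by means of a $q$-Clausen-type summation formula. First I would use the definition ${2k\brack k}_{q^2}=(q^2;q^2)_{2k}/(q^2;q^2)_k^2$ together with the splitting identities $(-q^2;q^2)_k(q^2;q^2)_k=(q^4;q^4)_k$, $(-q;q)_{2k}(q;q)_{2k}=(q^2;q^2)_{2k}$ and $(q;q)_{2k}=(q;q^2)_k(q^2;q^2)_k$ to clear every minus-signed $q$-shifted factorial. This rewrites the summand as a mixed-base $q$-hypergeometric term (involving bases $q$, $q^2$ and $q^4$) and turns the whole sum into a terminating series to which the standard transformation and summation machinery applies.

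The heart of the argument is the new $q$-Clausen-type summation formula announced in the abstract. In the classical limit, Clausen's identity expresses the square of a ${}_2F_1$ as a single ${}_3F_2$, and the factor ${2k\brack k}_{q^2}^2$ in the summand is exactly the square that this $q$-analogue is built to absorb. Applying it linearises the product of the three $q$-binomial coefficients and collapses the sum to a single terminating series, which I would then evaluate in closed form by the $q$-Chu--Vandermonde formula (and, where the parameters are balanced, by $q$-Pfaff--Saalsch\"utz). The outcome should be an explicit product of $q$-shifted factorials together with a $q^4$-binomial coefficient.

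A convenient feature is that the parity dichotomy in \eqref{eq:2.2} is not a separate case but is encoded in this closed form. Writing $\frac{p-1}{2}=m$, the index $\frac{p-2s-1}{4}=\frac{m-s}{2}$ is a nonnegative integer precisely when $s\equiv m\pmod 2$, so the $q$-binomial ${\frac{p-1}{2}\brack \frac{p-2s-1}{4}}_{q^4}$ vanishes by convention whenever $s\not\equiv \frac{p-1}{2}\pmod 2$. Producing the single product form therefore yields both branches of the statement at once, and the remaining task is to reduce that product modulo $[p]^2$, using $[p]\mid(1-q^{jp})$ and a careful expansion of the $q^4$-binomials whose arguments straddle a multiple of $p$.

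The main obstacle is the passage from a congruence modulo $[p]$ to the supercongruence modulo $[p]^2$. Evaluating the linearised series at a primitive $p$-th root of unity controls only the leading order, whereas the second-order term demands an exact polynomial identity in $q$ rather than a mere congruence. This is where I expect the Lagrange and Newton interpolation techniques to be decisive: they let one pin down the relevant polynomials in $q$ by matching enough values, or enough finite differences, thereby upgrading the $q$-Clausen evaluation to an exact identity from which the $[p]^2$ information can be read off. Establishing that exact identity, and tracking the error terms to second order, is the delicate part of the proof.
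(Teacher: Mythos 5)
Your proposal misses the one idea that makes the paper's proof work, and the route you sketch cannot deliver the theorem at full strength. The paper does not use the $q$-Clausen-type formula for this theorem at all (that formula is reserved for Theorem~\ref{thm:2k2k2k-2}). Instead, the entire modulo-$[p]^2$ content comes from a single term-by-term congruence (Lemma~\ref{lem:one}): from the polynomial identity $(1-q^{p-2j+1})(1-q^{p+2j-1})+(1-q^{2j-1})^2q^{p-2j+1}=(1-q^p)^2$ one gets
\[
{\frac{p-1}{2}+k\brack 2k}_{q^2}\equiv (-1)^k {2k\brack k}_{q^2}\frac{q^{kp-k^2}}{(-q;q)_{2k}^2}\pmod{[p]^2},
\]
which replaces the factor ${2k\brack k}_{q^2}/(-q;q)_{2k}^2$ in each summand by $(-1)^kq^{k^2-kp}{\frac{p-1}{2}+k\brack 2k}_{q^2}$. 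After this substitution there is no congruence left to analyze: the truncated sum becomes a terminating ${}_4\phi_3$ that is evaluated \emph{exactly} by the identity \eqref{eq:key}, i.e.\ by Andrews' terminating $q$-analogue of Watson's formula \eqref{eq:andrews} --- not by $q$-Chu--Vandermonde or $q$-Pfaff--Saalsch\"utz --- and it is this Watson-type evaluation that produces the parity dichotomy you correctly anticipated. Your proposed mechanism for the $[p]^2$ upgrade (evaluation at a $p$-th root of unity plus Lagrange/Newton interpolation) is not workable: root-of-unity evaluation only ever sees the sum modulo $[p]$, and in the paper the interpolation techniques serve to prove the exact $q$-Clausen identity in Section~4, not to promote mod-$[p]$ information to mod $[p]^2$.

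There is also a structural reason your Clausen-based plan cannot prove the statement as given. A Clausen-type identity factors the sum into a product of two truncated sums; if each factor is known only modulo $[p]$, the product is pinned down modulo $[p]^2$ only when both factors are $\equiv 0\pmod{[p]}$. This is exactly why Theorem~\ref{thm:2k2k2k-2}, whose proof \emph{does} go through the $q$-Clausen formula, yields $0$ modulo $[p]^2$ in the vanishing case but only a modulo-$[p]$ evaluation in the nonvanishing case \eqref{eq:new-2mpp-003}. Theorem~\ref{thm:main1}, by contrast, asserts an explicit nonzero value modulo $[p]^2$ when $s\equiv\frac{p-1}{2}\pmod 2$; your approach would at best recover that branch modulo $[p]$.
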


Letting $s=0$ in \eqref{eq:2.2}, we immediately get the following neat $q$-analogue
of \eqref{eq:beukers}.
\begin{cor}
Let $p$ be an odd prime. Then we have the following congruence{\rm:}
\begin{align}
&\hskip -2mm\sum_{k=0}^{\frac{p-1}{2}} {2k\brack k}_{q^2}^3 \frac{q^{2k}}{(-q^2;q^2)_k^2 (-q;q)_{2k}^2}  \nonumber\\
&\equiv \begin{cases}
\displaystyle q^{\frac{p-1}{2}}
{\frac{p-1}{2}\brack \frac {p-1}{4}}_{q^4}^2\frac{1}{(-q^2;q^2)_{\frac{p-1}{2}}^2},
&\text{if $p\equiv 1\pmod 4$,} \\[5pt]
0, &\text{otherwise,}
\end{cases}  \pmod{[p]^2}.  \label{eq:beukers-q}
\end{align}
\end{cor}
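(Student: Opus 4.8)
The plan is to obtain this corollary simply as the $s=0$ specialization of Theorem~\ref{thm:main1}, followed by a one-line simplification of the closed form. First I would set $s=0$ in \eqref{eq:2.2}. On the left-hand side the factor ${2k\brack k+s}_{q^2}$ becomes ${2k\brack k}_{q^2}$, so the summand collapses to the cube ${2k\brack k}_{q^2}^3\,q^{2k}/\big((-q^2;q^2)_k^2(-q;q)_{2k}^2\big)$, which is exactly the left-hand side of \eqref{eq:beukers-q}. At the same time the parity condition $s\equiv\frac{p-1}{2}\pmod 2$ becomes the requirement that $\frac{p-1}{2}$ be even, i.e.\ $p\equiv 1\pmod 4$; in all other cases the sum is $\equiv 0\pmod{[p]^2}$, matching the case split in \eqref{eq:beukers-q}.

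It then remains to simplify the nonzero value. Setting $s=0$ in the top branch of \eqref{eq:2.2} replaces $(-1)^s$ by $1$, replaces $q^{\frac{p-1}{2}-s^2}$ by $q^{\frac{p-1}{2}}$, sends the index $\frac{p-2s-1}{4}$ to $\frac{p-1}{4}$, and turns the numerator $(q^2;q^2)_{\frac{p-2s-1}{2}}(q^2;q^2)_{\frac{p+2s-1}{2}}$ into $(q^2;q^2)_{\frac{p-1}{2}}^2$. The resulting expression is
$$
q^{\frac{p-1}{2}}{\frac{p-1}{2}\brack\frac{p-1}{4}}_{q^4}^2\,\frac{(q^2;q^2)_{\frac{p-1}{2}}^2}{(q^4;q^4)_{\frac{p-1}{2}}^2}.
$$
The only computation needed is the elementary factorization $(q^4;q^4)_n=(q^2;q^2)_n(-q^2;q^2)_n$, obtained from $(1-q^{2j})(1+q^{2j})=1-q^{4j}$ for $1\leqslant j\leqslant n$. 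With $n=\frac{p-1}{2}$ this gives $(q^2;q^2)_n/(q^4;q^4)_n=1/(-q^2;q^2)_n$, and squaring rewrites the displayed value as $q^{\frac{p-1}{2}}{\frac{p-1}{2}\brack\frac{p-1}{4}}_{q^4}^2/(-q^2;q^2)_{\frac{p-1}{2}}^2$, which is precisely the right-hand side of \eqref{eq:beukers-q}.

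Since each step is either a direct substitution into \eqref{eq:2.2} or the one-line $q$-product identity above, I do not expect any genuine obstacle here: the entire analytic content resides in Theorem~\ref{thm:main1}, and the corollary is purely a bookkeeping simplification. If anything, the point deserving the most care is verifying that the parity condition collapses correctly and that the two $q$-shifted factorials in the numerator both become $(q^2;q^2)_{\frac{p-1}{2}}$, so that the telescoping with $(q^4;q^4)_{\frac{p-1}{2}}^2$ is exact.
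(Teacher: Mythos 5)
Your proposal is correct and coincides with the paper's own derivation: the corollary is obtained there exactly by setting $s=0$ in \eqref{eq:2.2}, with the parity condition collapsing to $p\equiv 1\pmod 4$ and the value simplified via $(q^4;q^4)_n=(q^2;q^2)_n(-q^2;q^2)_n$. There is nothing missing; the analytic content indeed lives entirely in Theorem~\ref{thm:main1}.
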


Letting $q\to 1$ in \eqref{eq:beukers-q}, and using
the congruence (see \cite[Lemma 3.4]{SunZH1.5})
\begin{align}
{\frac{p-1}{2}\choose \frac {p-1}{4}}^2\frac{1}{2^{p-1}}\equiv 4x^2-2p\pmod{p^2}, \label{eq:CDE-Sun}
\end{align}
where $p\equiv 1\pmod{4}$ is a prime and $p=x^2+y^2$ with $x\equiv 1\pmod{4}$, we obtain
Van Hamme's congruence \eqref{eq:beukers}.  Note that \eqref{eq:CDE-Sun}
is easily proved by the Beukers-Chowla-Dwork-Evans congruence \cite{CDE,Pan2}:
\begin{align*}
{\frac{p-1}{2}\choose \frac{p-1}{4}}\equiv\frac{2^{p-1}+1}{2}\left(2x-\frac{p}{2x}\right) \pmod{p^2},
\end{align*}
where the numbers $p$ and $x$ are the same as in \eqref{eq:CDE-Sun}.

\medskip
\noindent{\it Remark.} Let $p$ be an odd prime of the form $4k+3$. Then by the antisymmetry of the $k$-th
and $(\frac{p-1}{2}-k)$-th terms modulo $[p]$, we can easily prove that
\begin{align*}
&\hskip -2mm\sum_{k=0}^{\frac{p-1}{2}} {2k\brack k}_{q^2}^3 \frac{q^{k-2k^2}}{(-q^2;q^2)_k^2 (-q;q)_{2k}^2}  \equiv 0 \pmod{[p]}.
\end{align*}

Our second result is a unified $q$-analogue of Z.-H. Sun's congruence \eqref{eq:sun-lengendre-1} and Z.-W. Sun's generalization of
\eqref{eq:RV-B1}--\eqref{eq:RV-B3}.
\begin{thm}\label{thm:2k2k2k-2}
Let $p$ be an odd prime and $m$, $r$ two positive integers with $p\nmid m$.
Let $s\leqslant\min\{\langle -\frac{r}{m}\rangle_p, \langle -\frac{m-r}{m}\rangle_p\}$ be
a nonnegative integer. If $\langle -\frac{r}{m}\rangle_p\equiv s+1\pmod 2$, then
\begin{align}
\sum_{k=s}^{\frac{p-1}{2}}\frac{(q^m;q^m)_{2k}(q^r;q^m)_k (q^{m-r};q^m)_{k} q^{mk} }{ (q^m;q^m)_{k-s}(q^m;q^m)_{k+s} (q^{2m};q^{2m})_k^2 }
&\equiv 0 \pmod{[p]^2}, \label{eq:new-2mpp-002}  \\
\sum_{k=s}^{p-1}\frac{(q^m;q^m)_{2k}(q^r;q^m)_k (q^{m-r};q^m)_{k} q^{mk} }{ (q^m;q^m)_{k-s}(q^m;q^m)_{k+s} (q^{2m};q^{2m})_k^2 }
&\equiv 0 \pmod{[p]^2}.  \label{eq:new-2mpp}
\end{align}
If $\langle -\frac{r}{m}\rangle_p\equiv s\pmod 2$, then the following congruence holds modulo $[p]${\rm:}
\begin{align}
&\hskip -2mm
\sum_{k=s}^{\frac{p-1}{2}}\frac{(q^m;q^m)_{2k}(q^r;q^m)_k (q^{m-r};q^m)_{k} q^{mk} }{ (q^m;q^m)_{k-s}(q^m;q^m)_{k+s} (q^{2m};q^{2m})_k^2 }
\notag\\
&\equiv\frac{q^{(s+\frac{p-1}{2})m} (q^m; q^{2m})_{\frac{\langle -\frac{r}{m}\rangle_p-s}{2}}
(q^m; q^{2m})_{\frac{\langle -\frac{m-r}{m}\rangle_p-s}{2}}
(q^{-m\langle -\frac{r}{m}\rangle_p};q^m)_s (q^{-m\langle -\frac{m-r}{m}\rangle_p};q^m)_s }
 {(q^{2m}; q^{2m})_{\frac{\langle-\frac{r}{m}\rangle_p+s}{2}} (q^{2m}; q^{2m})_{\frac{\langle-\frac{m-r}{m}\rangle_p+s}{2}}}. \label{eq:new-2mpp-003}
\end{align}
\end{thm}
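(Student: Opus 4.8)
The plan is to isolate the easy evaluation modulo $[p]$ from the genuine supercongruence, routing both through a $q$-Clausen-type summation. First I would fix notation: set $c=\langle -\frac{r}{m}\rangle_p$ and $d=\langle -\frac{m-r}{m}\rangle_p$, and record that $\frac{r}{m}+\frac{m-r}{m}=1$ forces $c+d=p-1$, whence $c\equiv d\pmod 2$; thus the two parity hypotheses are symmetric under $r\leftrightarrow m-r$. I would rewrite the summand as $T_k={2k\brack k+s}_{q^m}\dfrac{(q^r;q^m)_k(q^{m-r};q^m)_k\,q^{mk}}{(q^{2m};q^{2m})_k^2}$, using $(q^m;q^m)_{2k}/\bigl((q^m;q^m)_{k-s}(q^m;q^m)_{k+s}\bigr)={2k\brack k+s}_{q^m}$, and note that the $q$-binomial coefficient already annihilates the terms with $k<s$, so the lower summation limit may be taken to be $0$; this is what lets the reflection maps used below act on a symmetric range of indices.

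To prove the explicit congruence \eqref{eq:new-2mpp-003}, which is only modulo $[p]$, I would work in the field $\Q[q]/[p]$, where $q^p\equiv1$ gives $q^r\equiv q^{-mc}$ and $q^{m-r}\equiv q^{-md}$. After this substitution $(q^{-mc};q^m)_k=\prod_{j=0}^{k-1}(1-q^{m(j-c)})$ contains the vanishing term $1-q^0$ as soon as $k>c$, and likewise $(q^{-md};q^m)_k$ vanishes once $k>d$; hence both the $\frac{p-1}{2}$-sum and the $(p-1)$-sum truncate at $k=\min(c,d)\le\frac{p-1}{2}$ and agree modulo $[p]$. The resulting terminating series is exactly of the shape to which the new $q$-Clausen-type summation formula applies, factoring it as a product of two ${}_2\phi_1$'s, each summable by the $q$-Chu--Vandermonde identity. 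The two evaluations yield a ``$c$-factor'' and a ``$d$-factor'' whose product, after matching the $q$-shifted factorials against the indices $\frac{c\pm s}{2}$ and $\frac{d\pm s}{2}$, is precisely the right-hand side of \eqref{eq:new-2mpp-003} when $c\equiv s\pmod 2$; when $c\equiv s+1\pmod 2$ the half-integrality of $\frac{c-s}{2}$ forces a vanishing numerator factor, so the value is $0$. I would establish the $q$-Clausen-type formula itself by the Newton/Lagrange interpolation device, verifying that both sides are polynomials of equal degree in a freed parameter that agree at sufficiently many points.

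The supercongruences \eqref{eq:new-2mpp-002} and \eqref{eq:new-2mpp} modulo $[p]^2$ are the hard direction, since the reduction above controls only one power of $[p]$. The product structure from the $q$-Clausen-type formula supplies the second power: when $c\equiv s+1\pmod 2$ the product of the $c$-factor and the $d$-factor is $\equiv0\pmod{[p]}$, so in the field $\Q[q]/[p]$ one of them vanishes, and by the symmetry $r\leftrightarrow m-r$ (interchanging $c$ and $d$) so does the other; hence the terminating (substituted) sum, being their exact product, is genuinely divisible by $[p]^2$. What is lost in replacing $q^r$ by $q^{-mc}$ is a first-order correction: from $r+mc=pt$ one has $q^r\equiv q^{-mc}\bigl(1+t(q-1)[p]\bigr)\pmod{[p]^2}$, so the original sum equals the substituted one plus $[p]$ times an explicit correction $C$, and it remains to prove $C\equiv0\pmod{[p]}$. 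I would deduce this from a termwise antisymmetry of the kind used in the Remark after the Corollary, pairing the $k$-th term with its reflection, $k\mapsto p-1-k$ for \eqref{eq:new-2mpp} and $k\mapsto\frac{p-1}{2}-k$ for \eqref{eq:new-2mpp-002}. For \eqref{eq:new-2mpp} the same pairing also disposes of the tail $k>\max(c,d)$, where $(q^r;q^m)_k$ and $(q^{m-r};q^m)_k$ each already carry a factor $[p]$ and the term is outright $\equiv0\pmod{[p]^2}$; the fixed point of the reflection is checked separately.

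I expect the genuine obstacle to be this second-order bookkeeping: proving that the correction $C$ cancels to the required order, so that the $[p]^2$ produced by the two vanishing factors is not spoiled. Concretely, the antisymmetry that yields only a mod-$[p]$ statement in the Remark must here be sharpened to control $C$ modulo $[p]$, which forces one to compute the reflected ratio $T_{p-1-k}/T_k$ (and its $\frac{p-1}{2}$-analogue) precisely and to track how its dependence on the parity of $c$ produces the sign $-1$; this, together with the proof of the new $q$-Clausen-type summation formula, is the technical heart of the argument.
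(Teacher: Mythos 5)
Your proposal assembles the right ingredients (the relation $c+d=p-1$ with $c=\langle-\frac{r}{m}\rangle_p$, $d=\langle-\frac{m-r}{m}\rangle_p$, a $q$-Clausen factorization, terminating evaluations of the two factors), but it applies them in an order that creates a gap the paper never faces. You reduce modulo $[p]$ \emph{first}, replacing $q^r$ by $q^{-mc}$ and $q^{m-r}$ by $q^{-md}$, and then assert that the substituted sum, ``being their exact product, is genuinely divisible by $[p]^2$.'' That exactness fails on two counts. First, Theorem \ref{thm:important-thm} pairs $x$ with $q/x$ (in base $q^m$: the arguments must be $x$ and $q^m/x$); with $x=q^{-mc}$ one gets $q^m/x=q^{m(1+c)}=q^{-md}\cdot q^{mp}$, which equals $q^{-md}$ only modulo $[p]$, not as a polynomial identity. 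Second, the summand of the substituted sum carries $(q^m;q^m)_{2k}/(q^{2m};q^{2m})_k^2$, whereas the right-hand side of \eqref{eq:important-thm} demands $(q^{2m};q^{2m})_{n+k}/(q^{2m};q^{2m})_{n-k}$; these agree only via Lemma \ref{lem:one}, which is itself a congruence, not an identity. So your substituted sum is \emph{not} an exact product, and each appeal to a mod-$[p]$ equivalence costs a power of $[p]$: your route can only certify divisibility by $[p]$, which is why you are left with an uncontrolled correction term $C$. Nothing in your sketch proves $C\equiv0\pmod{[p]}$ — the antisymmetry pairing is speculative, and you yourself flag it as unresolved — so the proof of \eqref{eq:new-2mpp-002} and \eqref{eq:new-2mpp} is incomplete as written.

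The paper's arrangement eliminates the correction term entirely by inverting your order of operations: Lemma \ref{lem:one} is a congruence modulo $[p]^2$ (built on the identity $(1-q^{p-2j+1})(1-q^{p+2j-1})+(1-q^{2j-1})^2q^{p-2j+1}=(1-q^p)^2$) which rewrites $(q^m;q^m)_{2k}/(q^{2m};q^{2m})_k^2$ so that the whole sum becomes, modulo $[p]^2$, \emph{exactly} the Clausen-type sum \eqref{eq:pf-thm-1} with $q\to q^m$, $n=\frac{p-1}{2}$, and $x=q^r$ left untouched; Theorem \ref{thm:important-thm} then factors this exactly into the product \eqref{eq:factor-main}. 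Only at that point is each factor reduced modulo a single power of $[p]$ (Lemma \ref{thm:factor-1}, where $q^{m(1-p)}\equiv q^m$ and the replacement $q^r\mapsto q^{-mc}$ is harmless), and each factor vanishes mod $[p]$ by Andrews' terminating $q$-analogue of Watson's formula \eqref{eq:andrews} — not by $q$-Chu--Vandermonde, which enters only inside the proof of Theorem \ref{thm:important-thm}. Two first-order vanishings of an exact product give the second power of $[p]$ with no bookkeeping. A further, smaller gap: for the tail of \eqref{eq:new-2mpp} you dispose only of $k>\max(c,d)$, where both Pochhammer factors carry $[p]$; for $\frac{p-1}{2}<k\leqslant\max(c,d)$ you need the observation that ${2k\brack k+s}_{q^m}=(q^m;q^m)_{2k}/\bigl((q^m;q^m)_{k-s}(q^m;q^m)_{k+s}\bigr)$ is itself $\equiv0\pmod{[p]}$ there, which combined with the one vanishing Pochhammer factor makes each term $\equiv0\pmod{[p]^2}$; no reflection pairing is needed (nor would one be available).
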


Letting $s=0$, $-\frac{r}{m}=a$ and $q\to 1$ in \eqref{eq:new-2mpp}, we obtain \eqref{eq:sun-lengendre-1}.
On the other hand, it is not difficult to see that (see \cite{GZ}), for any prime $p\geqslant 5$,
\begin{align}\label{eq:values}
(-1)^{\langle -\frac{1}{3}\rangle_p}
=\left(\frac{-3}{p}\right),\quad
(-1)^{\langle -\frac{1}{4}\rangle_p}
=\left(\frac{-2}{p}\right), \quad
(-1)^{\langle -\frac{1}{6}\rangle_p}
=\left(\frac{-1}{p}\right).
\end{align}
Taking $r=1$ and $m=3,4,6$ in \eqref{eq:new-2mpp-002}, we obtain
\begin{cor}
Let $\geqslant 5$ be a prime and let $s$ be a nonnegative integer. Then there hold the following congruences modulo $[p]^2${\rm:}
\begin{align*}
\sum_{k=s}^{\frac{p-1}{2}}{2k\brack k+s}_{q^3}\frac{(q;q^3)_k (q^{2};q^3)_{k} q^{3k} }{ (q^{6};q^{6})_k^2 }
&\equiv 0,\ \text{if }s\leqslant\frac{p-1}{3}\ \text{and }s\equiv \frac{1+(\frac{-3}{p})}{2}\pmod{2},\\
\sum_{k=s}^{\frac{p-1}{2}}{2k\brack k+s}_{q^4}\frac{(q;q^4)_k (q^{3};q^4)_{k} q^{4k} }{ (q^{8};q^{8})_k^2 }
&\equiv 0, \ \text{if }s\leqslant\frac{p-1}{4}\ \text{and } s\equiv \frac{1+(\frac{-2}{p})}{2}\pmod{2},\\
\sum_{k=s}^{\frac{p-1}{2}}{2k\brack k+s}_{q^6}\frac{(q;q^6)_k (q^{5};q^6)_{k} q^{6k} }{ (q^{12};q^{12})_k^2 }
&\equiv 0,\ \text{if }s\leqslant\frac{p-1}{6}\ \text{and }s\equiv \frac{1+(\frac{-1}{p})}{2}\pmod{2}.
\end{align*}
\end{cor}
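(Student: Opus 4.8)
The plan is to obtain all three congruences as instances of \eqref{eq:new-2mpp-002}, taking $r=1$ together with $m=3$, $m=4$ and $m=6$ in turn. The hypothesis $p\nmid m$ of Theorem~\ref{thm:2k2k2k-2} is automatic: for $p\geqslant 5$ the prime $p$ divides none of $3,4,6$, whose only prime factors are $2$ and $3$. So the whole content of the corollary lies in checking that, after the substitution, the summand and the two side conditions of Theorem~\ref{thm:2k2k2k-2} reduce to the forms displayed in the corollary; the congruence modulo $[p]^2$ is then supplied verbatim by \eqref{eq:new-2mpp-002}.

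First I would rewrite the summand. Directly from the definition of the $q$-binomial coefficient,
$$
\frac{(q^m;q^m)_{2k}}{(q^m;q^m)_{k-s}\,(q^m;q^m)_{k+s}}={2k\brack k+s}_{q^m},
$$
while the remaining factor $(q^r;q^m)_k(q^{m-r};q^m)_k\,q^{mk}/(q^{2m};q^{2m})_k^2$ becomes, at $r=1$, exactly $(q;q^m)_k(q^{m-1};q^m)_k\,q^{mk}/(q^{2m};q^{2m})_k^2$. For $m=3,4,6$ this produces the numerators $(q;q^3)_k(q^2;q^3)_k$, $(q;q^4)_k(q^3;q^4)_k$, $(q;q^6)_k(q^5;q^6)_k$ and the denominators $(q^6;q^6)_k^2$, $(q^8;q^8)_k^2$, $(q^{12};q^{12})_k^2$, which is precisely the corollary's left-hand sides.

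Next I would translate the two hypotheses. For the parity condition I use \eqref{eq:values}: since $(-1)^{\langle -1/m\rangle_p}$ equals $(\frac{-3}{p}),(\frac{-2}{p}),(\frac{-1}{p})$ for $m=3,4,6$, we get $\langle -\frac1m\rangle_p\equiv\frac12(1-(\frac{\cdot}{p}))\pmod 2$, so the requirement $\langle -\frac{1}{m}\rangle_p\equiv s+1\pmod 2$ becomes $s\equiv\frac12(1+(\frac{\cdot}{p}))\pmod 2$; the two expressions agree for each value $\pm1$ of the Legendre symbol, as one checks directly. For the range condition one notes that $-\frac1m-\frac{m-1}{m}\equiv -1\pmod p$, so $\langle -\frac1m\rangle_p+\langle -\frac{m-1}{m}\rangle_p=p-1$, whence $\min\{\langle -\frac1m\rangle_p,\langle -\frac{m-1}{m}\rangle_p\}\leqslant\frac{p-1}{2}$; a short case split on the residue of $p$ modulo $m$ then pins this minimum down to $\lfloor\frac{p-1}{m}\rfloor$, which is the bound ``$s\leqslant\frac{p-1}{m}$'' appearing in the corollary.

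Since Theorem~\ref{thm:2k2k2k-2} already carries all the analytic weight, there is no genuine obstacle here; the only point that demands a little care is the last one, namely verifying that for each of $m=3,4,6$ it is indeed $\lfloor\frac{p-1}{m}\rfloor$---and not its companion $p-1-\lfloor\frac{p-1}{m}\rfloor$---that equals the smaller of the two least nonnegative residues, which follows by treating the cases $p\equiv 1\pmod m$ and $p\not\equiv 1\pmod m$ separately.
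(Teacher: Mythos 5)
Your proposal is correct and follows exactly the paper's own route: the paper likewise obtains the corollary by specializing \eqref{eq:new-2mpp-002} at $r=1$ and $m=3,4,6$, using \eqref{eq:values} to translate the parity hypothesis into the stated condition on the Legendre symbol. The routine verifications you spell out---rewriting $\frac{(q^m;q^m)_{2k}}{(q^m;q^m)_{k-s}(q^m;q^m)_{k+s}}$ as ${2k\brack k+s}_{q^m}$, and checking via $p\equiv\pm1\pmod m$ that $\min\left\{\langle-\tfrac1m\rangle_p,\langle-\tfrac{m-1}{m}\rangle_p\right\}=\lfloor\tfrac{p-1}{m}\rfloor$ so that the range condition becomes $s\leqslant\tfrac{p-1}{m}$---are precisely the details the paper leaves implicit.
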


The proof of Theorem~\ref{thm:2k2k2k-2} is based on the following highly non-trivial $q$-Clausen-type summation formula, which seems new and
interesting in its own right.
\begin{thm}\label{thm:important-thm}
Let $n$ and $s$ be nonnegative integers with $s\leqslant n$. Then
\begin{align}
&\hskip -3mm \left(\sum_{k=s}^{n} \frac{(q^{-2n};q^2)_k (x;q)_k q^k }{(q;q)_{k-s} (q;q)_{k+s}}\right)
  \left(\sum_{k=s}^{n} \frac{(q^{-2n};q^2)_k (q/x;q)_k q^k }{(q;q)_{k-s} (q;q)_{k+s}}\right) \nonumber \\
&=\frac{(-1)^n(q^2;q^2)_n^2 q^{-n^2}}{(q^2;q^2)_{n-s}(q^2;q^2)_{n+s}}
\sum_{k=s}^{n} \frac{(-1)^k(q^2;q^2)_{n+k} (x;q)_k (q/x;q)_k  q^{k^2-2nk}}{(q^2;q^2)_{n-k} (q;q)_{k-s}(q;q)_{k+s} (q;q)_{2k}}.
\label{eq:important-thm}
\end{align}
\end{thm}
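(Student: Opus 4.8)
The plan is to reduce \eqref{eq:important-thm} to a polynomial identity in one variable and then verify it at finitely many points. First I would clean up the right-hand side. Using the reflection formula $(q^{-2n};q^2)_k=(-1)^k q^{k^2-k-2nk}(q^2;q^2)_n/(q^2;q^2)_{n-k}$ one checks that $(-1)^k q^{k^2-2nk}(q^2;q^2)_{n+k}/(q^2;q^2)_{n-k}=(q^{-2n};q^2)_k(q^{2n+2};q^2)_k\,q^k$, so the right-hand side of \eqref{eq:important-thm} becomes
$$\frac{(-1)^n(q^2;q^2)_n^2 q^{-n^2}}{(q^2;q^2)_{n-s}(q^2;q^2)_{n+s}}\sum_{k=s}^{n}\frac{(q^{-2n};q^2)_k(q^{2n+2};q^2)_k\,(x;q)_k(q/x;q)_k\,q^k}{(q;q)_{k-s}(q;q)_{k+s}(q;q)_{2k}}.$$
Next I would record two structural facts. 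Splitting the base-$q^2$ symbol by $(q^{-2n};q^2)_k=(q^{-n};q)_k(-q^{-n};q)_k$ rewrites each factor on the left as a terminating base-$q$ series
$$A(x):=\sum_{k=s}^{n}\frac{(q^{-n};q)_k(-q^{-n};q)_k\,(x;q)_k\,q^k}{(q;q)_{k-s}(q;q)_{k+s}},$$
the two factors being $A(x)$ and $A(q/x)$; in particular each sum terminates at $k=n$ because of the factor $(q^{-n};q)_k$. Moreover $(x;q)_k(q/x;q)_k=\prod_{i=0}^{k-1}\bigl(1+q^{2i+1}-q^i(x+q/x)\bigr)$ is a polynomial of degree $k$ in $u:=x+q/x$, so both sides of \eqref{eq:important-thm} are polynomials in $u$ of degree at most $n$ (equivalently, symmetric Laurent polynomials in $x$ supported in the $x$-degrees $-n,\dots,n$).

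Since a polynomial of degree $\le n$ in $u$ is determined by its values at $n+1$ points, it suffices to prove \eqref{eq:important-thm} at the $n+1$ distinct nodes $u_j=q^{-j}+q^{1+j}$, that is at $x=q^{-j}$ for $j=0,1,\dots,n$; this is where Lagrange interpolation enters. At $x=q^{-j}$ the factor $(q^{-j};q)_k$ vanishes for $k>j$, so both the first factor $A(q^{-j})$ and the right-hand sum collapse to terminating series running over $s\le k\le\min(j,n)$. Each of these I would sum in closed product form: $A(q^{-j})$ by the $q$-Chu--Vandermonde formula, and the right-hand node value by the appropriate terminating (very-well-poised) $q$-summation, the pair $q^{-n},-q^{-n}$ together with $q^{n+1},-q^{n+1}$ being exactly what makes such an evaluation available. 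The remaining factor $A(q^{1+j})$ does not truncate through $x$, but it is still a terminating series (via $(q^{-n};q)_k$) and, after the same base splitting, it too sums to a closed product. The triangular vanishing $(q^{-j};q)_k=0$ for $k>j$ is the Newton-interpolation bookkeeping that organizes these evaluations.

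The concluding step is to multiply the closed forms for $A(q^{-j})$ and $A(q^{1+j})$ and to check that the product equals the prefactor $C=(-1)^n(q^2;q^2)_n^2 q^{-n^2}/\bigl((q^2;q^2)_{n-s}(q^2;q^2)_{n+s}\bigr)$ times the closed form of the right-hand node value; this reduces to an elementary identity among $q$-shifted factorials valid for every $j$, whence \eqref{eq:important-thm} follows by interpolation. I expect the main obstacle to be precisely the closed-form evaluation of the non-truncating factor $A(q^{1+j})$ and of the very-well-poised right-hand node sum: because these series are born from a mixture of the bases $q$ and $q^2$, the relevant summation is not a single off-the-shelf $q$-Chu--Vandermonde but a tailored terminating evaluation, and forcing the powers of $q$ and the $q$-factorials to match across the three closed forms is the genuinely new $q$-Clausen content of the theorem.
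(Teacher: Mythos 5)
Your reduction of \eqref{eq:important-thm} to an interpolation problem is correct: since $(x;q)_k(q/x;q)_k=\prod_{i=0}^{k-1}\bigl(1+q^{2i+1}-q^iu\bigr)$ with $u=x+q/x$, and since the left-hand side is a Laurent polynomial in $x$ supported in degrees $[-n,n]$ and invariant under $x\mapsto q/x$, both sides are polynomials in $u$ of degree at most $n$, so equality at the $n+1$ points $x=q^{-j}$, $0\leqslant j\leqslant n$, would indeed suffice. The gap is that the closed-form evaluations your plan rests on do not exist. Write $A(y)=\sum_{k=s}^{n}(q^{-2n};q^2)_k(y;q)_kq^k/\bigl((q;q)_{k-s}(q;q)_{k+s}\bigr)$. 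In base $q$ the truncated sum $A(q^{-j})$ is a terminating ${}_3\phi_2$ with upper parameters $q^{s-n}$, $-q^{s-n}$, $q^{s-j}$ and lower parameters $q^{2s+1}$, $0$: it is not a ${}_2\phi_1$, so $q$-Chu--Vandermonde does not apply; it cannot be balanced (one lower parameter is $0$), so $q$-Saalsch\"utz does not apply; and Andrews' formula \eqref{eq:andrews} would require the lower parameter $q^{2s-2n}$ matching the pair $\pm q^{s-n}$, not $q^{2s+1}$. This is not merely a missing reference: already for $n=2$, $s=0$, $j=2$ one computes $A(q^{-2})=q^{-7}\bigl(1+q+2q^2+3q^3+2q^4+2q^5+q^6+q^7\bigr)$, and the numerator equals $13$ at $q=1$; any product of $q$-shifted factorials of total degree $7$ evaluates at $q=1$ to an integer all of whose prime factors are at most $7$, so $A(q^{-2})$ has no closed product form at all, and the same obstruction hits the companion factor $A(q^{1+j})$ and the right-hand node sum. (The reason such series \emph{are} summable in the paper's Section 5 is a congruence accident: there $q^{-2n}=q^{m(1-p)}\equiv q^m\pmod{[p]}$, which restores exactly the $b,-b,b^2$ pattern needed for \eqref{eq:andrews}; no such replacement is legitimate in the exact identity you are trying to prove.)

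The paper proves \eqref{eq:important-thm} without evaluating either side at any point of $x$: it expands the left-hand product into diagonal and off-diagonal parts, uses the symmetrization identity \eqref{eq:lem-important2} to rewrite $(x;q)_j(q/x;q)_k+(x;q)_k(q/x;q)_j$ as a combination of the products $(x;q)_m(q/x;q)_m$, and then computes the resulting coefficient $a_m$ in \eqref{eq:am-0} in closed form; that computation reduces to two genuine $q$-Chu--Vandermonde evaluations together with the auxiliary identity \eqref{eq:am-2}, and the Lagrange/Newton interpolation and root-counting arguments are confined to those auxiliary lemmas, where explicit verification is possible. So your framework (a polynomial identity checked at finitely many points) matches the spirit of the paper's lemmas, but at the level of the theorem itself the node values $A(q^{-j})$, $A(q^{1+j})$, and $R(q^{-j})$ are individually intractable, and proving the required relation $A(q^{-j})A(q^{1+j})=C\cdot R(q^{-j})$ at each node is essentially the original theorem again rather than an elementary identity among $q$-shifted factorials.
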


Recall that the {\it basic hypergeometric series $_{r+1}\phi_r$} (see \cite{GR}) is defined by
\[
_{r+1}\phi_{r}\left[\!\!\begin{array}{c}
a_1,a_2,\ldots,a_{r+1}\\
b_1,b_2,\ldots,b_{r}
\end{array};\,q, z
\right]
=\sum_{n=0}^{\infty}\frac{(a_1,a_2,\ldots,a_{r+1};\,q)_n z^n}
{(q,b_1,b_2,\ldots,b_{r};\,q)_n},
\]
where $(a_1,\ldots,a_m;q)_n=(a_1;q)_n\cdots (a_m;q)_n$.
Theorem~\ref{thm:important-thm}
is reminiscent to Jackson's $q$-analogue of Clausen's formula:
\begin{align*}
&_{2}\phi_{1}\left[\begin{array}{c}
a,\ b\\
abq^{\frac{1}{2}}
\end{array};\,q,z
\right]
{}_{2}\phi_{1}\left[\begin{array}{c}
a,\ b\\
abq^{\frac{1}{2}}
\end{array};\,q,zq^{\frac{1}{2}}
\right]
={}_{4}\phi_{3}\left[\begin{array}{c}
a,\ b,\ a^{\frac{1}{2}}b^{\frac{1}{2}},\ -a^{\frac{1}{2}}b^{\frac{1}{2}}\\
ab,\ a^{\frac{1}{2}}b^{\frac{1}{2}}q^{1/4},\
-a^{\frac{1}{2}}b^{\frac{1}{2}}q^{1/4}
\end{array};\,q^{\frac{1}{2}},z
\right].  
\end{align*}

We also have the following $q$-analogue of \eqref{eq:sun13}, which  reduces to \eqref{eq:q-RV1} when $s=0$.
\begin{thm}\label{thm:2k2k-r}
Let $p$ be an odd prime and let $0\leqslant s\leqslant \frac{p-1}{2}$. Then
\begin{align}
\sum_{k=0}^{\frac{p-1}{2}}\frac{(q;q^2)_k (q;q^2)_{k+s}}{(q^2;q^2)_k (q^2;q^2)_{k+s} }
&\equiv \left(\frac{-1}{p}\right)q^{\frac{1-p^2}{4}}\pmod{[p]^2}. \label{eq:new-RV1}
\end{align}
\end{thm}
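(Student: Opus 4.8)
The plan is to deduce \eqref{eq:new-RV1} from its $s=0$ instance, which is precisely \eqref{eq:q-RV1}, by showing that the left-hand side is independent of $s$ modulo $[p]^2$. Write $n=\frac{p-1}{2}$ and $u_m=\frac{(q;q^2)_m}{(q^2;q^2)_m}$, so that the summand factors as $u_k\,u_{k+s}$ and the sum becomes $S(s)=\sum_{k=0}^{n}u_k u_{k+s}$. Since the claimed right-hand side $\left(\frac{-1}{p}\right)q^{(1-p^2)/4}$ does not involve $s$, it suffices to prove $S(s)\equiv S(0)\pmod{[p]^2}$ for $1\le s\le n$ and to identify $S(0)$ with the known value. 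For the latter I first observe that $(q;q^2)_k$ contains the factor $1-q^p=(1-q)[p]$ as soon as $k\ge\frac{p+1}{2}$, while $(q^2;q^2)_k$ is prime to $[p]$ for $k\le p-1$; hence $u_k\equiv0\pmod{[p]}$ for $\frac{p+1}{2}\le k\le p-1$, so $u_k^2\equiv0\pmod{[p]^2}$ there. Therefore $S(0)=\sum_{k=0}^{n}u_k^2\equiv\sum_{k=0}^{p-1}u_k^2\pmod{[p]^2}$, and the latter is supplied by \eqref{eq:q-RV1}.

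To pass from $s-1$ to $s$ I would apply the $q$-analogue of the Wilf--Zeilberger creative-telescoping method to $F(s,k)=u_k u_{k+s}$, which is a $q$-proper hypergeometric term in the pair $(q^s,q^k)$. The algorithm produces a rational certificate $R(s,k)$ in $q^s,q^k$ with
\[
F(s,k)-F(s-1,k)=G(s,k)-G(s,k-1),\qquad G(s,k)=R(s,k)\,F(s,k).
\]
Summing this over $0\le k\le n$ telescopes the right-hand side to the two boundary contributions, giving $S(s)-S(s-1)=G(s,n)-G(s,-1)$. The lower boundary vanishes because $F(s,-1)=u_{-1}u_{s-1}=0$ (the reciprocal factor $1/(q^2;q^2)_{-1}$ kills it), so the whole difference collapses to the single term $G(s,n)=R(s,n)\,u_n u_{n+s}$.

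It remains to show $G(s,n)\equiv0\pmod{[p]^2}$, and this is where the genuine supercongruence content---and the main obstacle---lies. One factor of $[p]$ is immediate: for $s\ge1$ we have $n+s\ge\frac{p+1}{2}$, so $(q;q^2)_{n+s}$ carries the factor $1-q^p$ and $u_{n+s}\equiv0\pmod{[p]}$, while $u_n$ is a unit modulo $[p]$ (its numerator $(q;q^2)_n$ only reaches the exponent $2n-1=p-2$). In particular the easy half of the telescoping already yields $S(s)\equiv S(s-1)\pmod{[p]}$, hence $S(s)\equiv S(0)\pmod{[p]}$, confirming the shape of the answer. The delicate point is to extract a \emph{second} factor of $[p]$ from $R(s,n)$: I expect the certificate $R(s,k)$ to contain a factor specializing to a multiple of $1-q^{2k+1}$ or of the $q$-integer $[2k+1]$, which becomes $1-q^p$ (resp.\ $[p]$) at $k=n$. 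Proving that $R(s,n)$ really supplies this extra vanishing, rather than merely being $[p]$-integral, is the crux, since a naive telescoping only gives the weaker congruence modulo $[p]$.

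As an auxiliary tool for analyzing the boundary term and tracking the mod-$[p]$ structure, I would record the reflection relation $u_{n-j}\equiv q^{j}\,u_n\,u_j\pmod{[p]}$, obtained from $q^p\equiv1$ together with the splittings $(q;q^2)_n=(q;q^2)_{n-j}\,(q^{p-2j};q^2)_j$ and $(q^2;q^2)_n=(q^2;q^2)_{n-j}\,(q^{p+1-2j};q^2)_j$. This relation makes the first-order ($[p]^1$) behaviour of the truncated sum transparent and serves to cross-check the certificate computation. Once $G(s,n)\equiv0\pmod{[p]^2}$ is established, induction on $s$ combined with the base case above gives \eqref{eq:new-RV1}.
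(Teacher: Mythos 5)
Your reduction of the base case is fine: $u_k\equiv 0\pmod{[p]}$ for $\frac{p+1}{2}\leqslant k\leqslant p-1$ does give $S(0)\equiv\sum_{k=0}^{p-1}u_k^2\pmod{[p]^2}$, so \eqref{eq:q-RV1} handles $s=0$. The problem is the induction step, and the gap there is not a technicality you can defer --- it is the whole theorem. You never exhibit the certificate $R(s,k)$; Gosper/$q$-Zeilberger applied to $F(s,k)-F(s-1,k)$ is not guaranteed a priori to succeed with a first-order telescoping of the form you write down, and this matters here because $S(s)$ is \emph{not} constant in $s$ as an exact identity in $q$ (it is only constant modulo $[p]^2$). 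Consequently, even if the certificate exists, the boundary term $G(s,n)=R(s,n)\,u_n u_{n+s}$ is genuinely nonzero, and your argument needs $[p]^2\mid G(s,n)$. You get one factor of $[p]$ from $u_{n+s}$, but the second factor must come from $R(s,n)$, and you offer only the expectation that $R$ contains a factor specializing to $[2k+1]$ at $k=n$. Proving that expectation is exactly the supercongruence content of Theorem~\ref{thm:2k2k-r}; as written, your proposal establishes the congruence only modulo $[p]$, which is the easy half.

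For comparison, the paper's proof needs no induction on $s$ and gets both powers of $[p]$ in one stroke: writing $\frac{(q;q^2)_k}{(q^2;q^2)_k}={2k\brack k}_{q^2}\frac{1}{(-q;q)_{2k}}$ and applying Lemma~\ref{lem:one} to the index-$k$ factor only, the sum becomes, modulo $[p]^2$,
\begin{align*}
\sum_{k=0}^{\frac{p-1}{2}} (-1)^k {\tfrac{p-1}{2}+k\brack 2k}_{q^2} {2k+2s\brack k+s}_{q^2}\frac{(-q;q)_{2k}\, q^{k^2-kp}}{(-q;q)_{2k+2s}},
\end{align*}
which is then evaluated \emph{exactly} by the closed-form identity \eqref{eq:lemtwo} (itself a consequence of the finite $q$-binomial theorem via \eqref{eq:q-diff}), yielding $(-1)^{\frac{p-1}{2}}q^{\frac{1-p^2}{4}}$. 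In other words, the mod-$[p]^2$ information is packaged once and for all in Lemma~\ref{lem:one}, and the remaining sum has an exact evaluation --- precisely the ingredient your telescoping scheme would have to reconstruct inside $R(s,n)$. If you want to salvage your approach, you would need to actually compute the certificate and prove the divisibility of $R(s,n)$ by $[p]$, at which point you will likely find yourself reproving something equivalent to Lemma~\ref{lem:one} combined with \eqref{eq:lemtwo}.
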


Finally, we have  the following generalization of the previous congruences \eqref{eq:q-RV2}--\eqref{eq:q-RV4}.
\begin{thm}\label{thm:2mp2}
Let $p$ be an odd prime and let $m$, $r$ be positive integers with $p\nmid m$ and $r<m$.
Then for any integer $s$ with $0\leqslant s\leqslant \langle -\frac{m-r}{m}\rangle_p$,
there holds
\begin{align}
\sum_{k=0}^{p-s-1}\frac{(q^r;q^m)_k (q^{m-r};q^m)_{k+s} }{(q^m;q^m)_k (q^m;q^m)_{k+s} }
\equiv (-1)^{\langle -\frac{r}{m}\rangle_p}
q^{\frac{-m\langle -\frac{r}{m}\rangle_p \left(\langle -\frac{r}{m}\rangle_p+1 \right)}{2}} \pmod{[p]}. \label{eq:new-2mp}
\end{align}
In particular, if $p\equiv \pm 1\pmod m$, then
\begin{align}
\sum_{k=0}^{p-s-1}\frac{(q^r;q^m)_k (q^{m-r};q^m)_{k+s} }{(q^m;q^m)_k (q^m;q^m)_{k+s} }
\equiv (-1)^{\langle -\frac{r}{m}\rangle_p}q^{\frac{r(m-r)(1-p^2)}{2m}} \pmod{[p]}. \label{eq:new-2mp-2}
\end{align}
\end{thm}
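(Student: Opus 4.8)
The plan is to work throughout in the field $\Q[q]/[p]$, in which $q$ behaves as a primitive $p$-th root of unity, so that $q^p\equiv 1\pmod{[p]}$ and $[p]$ divides $1-q^N$ exactly when $p\mid N$. Write $Q=q^m$, $a=\langle-\tfrac{r}{m}\rangle_p$ and $b=\langle-\tfrac{m-r}{m}\rangle_p$. From $ma\equiv -r$ and $mb\equiv r-m\pmod p$ we get $m(a+b)\equiv -m$, hence $a+b=p-1$, and the hypothesis $0\leqslant s\leqslant b$ becomes $a\leqslant p-1-s$. The first step is to truncate the sum. For $0\leqslant k\leqslant p-s-1$ the denominators $(q^m;q^m)_k$ and $(q^m;q^m)_{k+s}$ are coprime to $[p]$, since a factor $1-q^{mj}$ is divisible by $[p]$ only when $p\mid j$, which never happens for $1\leqslant j\leqslant k+s\leqslant p-1$. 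On the other hand the factor $1-q^{r+ma}$ of $(q^r;q^m)_k$ is divisible by $[p]$ (as $r+ma\equiv 0\pmod p$) and is present as soon as $k\geqslant a+1$. Thus every term with $a<k\leqslant p-s-1$ vanishes modulo $[p]$, and the sum in \eqref{eq:new-2mp} is congruent to $\sum_{k=0}^{a}\frac{(q^r;q^m)_k (q^{m-r};q^m)_{k+s}}{(q^m;q^m)_k (q^m;q^m)_{k+s}}$.

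Next I would factor out the part independent of $k$. Using $(q^{m-r};q^m)_{k+s}=(q^{m-r};q^m)_s\,(q^{m(s+1)-r};q^m)_k$ and $(q^m;q^m)_{k+s}=(q^m;q^m)_s\,(q^{m(s+1)};q^m)_k$, the truncated sum equals $\frac{(q^{m-r};q^m)_s}{(q^m;q^m)_s}\sum_{k=0}^{a}\frac{(q^r;Q)_k\,(q^{m(s+1)-r};Q)_k}{(Q;Q)_k\,(q^{m(s+1)};Q)_k}$. The crucial observation is that modulo $[p]$ the parameter $q^r$ turns into the \emph{terminating} parameter $Q^{-a}$: since $1-q^{r+mj}\equiv 1-Q^{\,j-a}\pmod{[p]}$ for every $j$, one has $(q^r;Q)_k\equiv(Q^{-a};Q)_k$ and likewise $(q^{m(s+1)-r};Q)_k\equiv(Q^{a+s+1};Q)_k$. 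Hence the inner sum is congruent to the terminating series ${}_2\phi_1(Q^{-a},Q^{a+s+1};Q^{s+1};Q,1)$, whose argument $1=Q^{s+1}Q^{a}/Q^{a+s+1}$ is exactly the special point at which a terminating ${}_2\phi_1$ is summed by the $q$-Chu--Vandermonde formula. Applying that formula yields the closed form $(Q^{-a};Q)_a/(Q^{s+1};Q)_a$.

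Then I would simplify. Writing $1-Q^{-i}=-Q^{-i}(1-Q^{i})$ gives $(Q^{-a};Q)_a=(-1)^a Q^{-a(a+1)/2}(Q;Q)_a$, while $(Q^{s+1};Q)_a=(Q;Q)_{s+a}/(Q;Q)_s$ and $(Q;Q)_a/(Q;Q)_{s+a}=1/(Q^{a+1};Q)_s$. Collecting everything, the whole expression collapses to $(-1)^a q^{-ma(a+1)/2}\,(q^{m-r};q^m)_s/(Q^{a+1};Q)_s$. The remaining ratio is $\equiv 1\pmod{[p]}$: because $a+b=p-1$ we have $Q^{a+1}=Q^{p-b}\equiv Q^{-b}\equiv q^{m-r}\pmod{[p]}$ (using $Q^p\equiv 1$ and $m-r\equiv-mb$), so $(q^{m-r};q^m)_s\equiv(Q^{a+1};Q)_s$ termwise. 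This proves \eqref{eq:new-2mp}. For the particular case \eqref{eq:new-2mp-2} I would only compare exponents modulo $p$: from $ma\equiv-r$ and $a+1\equiv(m-r)/m$ we get $-\tfrac{ma(a+1)}{2}\equiv\tfrac{r(m-r)}{2m}\pmod p$, and $\tfrac{r(m-r)(1-p^2)}{2m}\equiv\tfrac{r(m-r)}{2m}\pmod p$ as well, so the two powers of $q$ agree modulo $[p]$; the assumption $p\equiv\pm1\pmod m$ is used precisely to guarantee $m\mid p^2-1$, which makes the exponent in \eqref{eq:new-2mp-2} an integer.

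The main obstacle is the middle step: recognizing that reduction modulo $[p]$ converts the innocuous parameter $q^r$ into the terminating parameter $Q^{-a}$, thereby turning the partial sum into a genuinely terminating $q$-Chu--Vandermonde sum, and then verifying that the fixed argument $z=1$ is exactly the value required by that summation. Once this identification is in place, everything reduces to bookkeeping with $q$-shifted factorials together with the single relation $a+b=p-1$.
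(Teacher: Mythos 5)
Your proof is correct, but it takes precisely the route the paper deliberately sidesteps: a direct appeal to the $q$-Chu--Vandermonde summation. The core mechanism is the same in both arguments --- modulo $[p]$ the parameter $q^r$ may be replaced by $q^{-m\langle -\frac{r}{m}\rangle_p}$, which forces the sum to terminate at $k=\langle -\frac{r}{m}\rangle_p$ and reduces everything to a Vandermonde-type evaluation, and both proofs settle \eqref{eq:new-2mp-2} by comparing exponents modulo $p$ using $q^p\equiv 1\pmod{[p]}$ and $2m\mid p^2-1$. The difference is in how the evaluation is carried out. You truncate the sum, split off the $s$-shifted factors, recognize the inner sum as a terminating ${}_2\phi_1$ whose argument $z=1$ is exactly the $q$-Chu--Vandermonde point, and invoke \cite[Appendix (II.6)]{GR}. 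The paper instead rewrites the two ratios of $q$-shifted factorials as $q$-binomial coefficients modulo $[p]$ (congruences \eqref{eq:frac-bino-1} and \eqref{eq:frac-bino-2}) and evaluates the resulting alternating sum by the identity \eqref{eq:lem-qbino} of Lemma \ref{lem:two}, which it proves from the finite $q$-binomial theorem via \eqref{eq:q-diff}; the remark preceding Lemma \ref{lem:two} explicitly concedes that this identity can be derived from $q$-Chu--Vandermonde once the sums are put in standard basic hypergeometric form, which is exactly what you do. Your version is shorter and makes the Vandermonde structure transparent; the paper's version is self-contained and lets Lemma \ref{lem:two} do double duty, since its companion identity \eqref{eq:lemtwo} also drives the proof of Theorem \ref{thm:2k2k-r}. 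One small point you should state explicitly: the final cancellation of $(q^{m-r};q^m)_s$ against $\bigl(q^{m(\langle -\frac{r}{m}\rangle_p+1)};q^m\bigr)_s$ requires the latter to be a unit modulo $[p]$, which holds because $\langle -\frac{r}{m}\rangle_p+s\leqslant p-1$ --- the same coprimality observation you already used in the truncation step.
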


By \eqref{eq:values}, letting $m=3,4,6$ in \eqref{eq:new-2mp-2} and noticing that
\begin{align*}
(-1)^{\langle -\frac{1}{3}\rangle_p}=(-1)^{\langle -\frac{2}{3}\rangle_p},\quad
(-1)^{\langle -\frac{1}{4}\rangle_p}=(-1)^{\langle -\frac{3}{4}\rangle_p},
\quad
(-1)^{\langle -\frac{1}{6}\rangle_p}=(-1)^{\langle -\frac{5}{6}\rangle_p},
\end{align*}
we obtain
\begin{cor}\label{cor:two}
Let $p>3$ be a prime and $s\geqslant 0$. Then the following congruences hold modulo $[p]${\rm:}
\begin{align}
\sum_{k=0}^{p-s-1}\frac{(q^r;q^3)_k (q^{3-r};q^3)_{k+s}}{(q^3;q^3)_k (q^3;q^3)_{k+s} }
&\equiv \left(\frac{-3}{p}\right)q^{\frac{1-p^2}{4}}\quad \text{for}\   r=1,2,
\text{and}\ s\leqslant\left\langle \frac{r-3}{3}\right\rangle_p, \label{eq:new-RV2}  \\
\sum_{k=0}^{p-s-1}\frac{(q^r;q^4)_k (q^{4-r};q^4)_{k+s}}{(q^4;q^4)_k (q^4;q^4)_{k+s} }
&\equiv \left(\frac{-2}{p}\right)q^{\frac{3(1-p^2)}{8}}\quad \text{for}\   r=1,3,
\text{and}\ s\leqslant\left\langle \frac{r-4}{4}\right\rangle_p,  \label{eq:new-RV3} \\
\sum_{k=0}^{p-s-1}\frac{(q^r;q^6)_k (q^{6-r};q^6)_{k+s}}{(q^6;q^6)_k (q^6;q^6)_{k+s} }
&\equiv \left(\frac{-1}{p}\right)q^{\frac{5(1-p^2)}{12}}\quad \text{for}\   r=1,5,
\text{and}\ s\leqslant\left\langle \frac{r-6}{6}\right\rangle_p. \label{eq:new-RV4}
\end{align}
\end{cor}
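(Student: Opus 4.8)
The plan is to obtain Corollary~\ref{cor:two} as a direct specialization of Theorem~\ref{thm:2mp2}, taking $m\in\{3,4,6\}$ and letting $r$ run over the two relevant residues for each modulus. First I would verify the hypotheses of Theorem~\ref{thm:2mp2} in each instance: for a prime $p>3$ and $m\in\{3,4,6\}$ one has $p\nmid m$; the pairs $(m,r)\in\{(3,1),(3,2),(4,1),(4,3),(6,1),(6,5)\}$ all satisfy $r<m$; and the bound on $s$ imposed in the corollary, namely $s\leqslant\langle\frac{r-m}{m}\rangle_p$, is exactly the constraint $0\leqslant s\leqslant\langle-\frac{m-r}{m}\rangle_p$ of the theorem, since $\frac{r-m}{m}=-\frac{m-r}{m}$. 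Moreover every prime $p>3$ is coprime to $3,4,6$ and its residue modulo each of these is forced to be $\pm1$, so $p\equiv\pm1\pmod m$ holds automatically and the sharper conclusion \eqref{eq:new-2mp-2} is available.

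Next I would read off the right-hand side of \eqref{eq:new-2mp-2} for each pair. The exponent $\frac{r(m-r)(1-p^2)}{2m}$ is manifestly invariant under $r\mapsto m-r$, so both choices of $r$ for a fixed $m$ produce the same $q$-power; specializing $m=3,4,6$ gives precisely the exponents displayed on the right-hand sides of \eqref{eq:new-RV2}--\eqref{eq:new-RV4}. For the sign I would invoke the evaluations \eqref{eq:values}, which identify $(-1)^{\langle-\frac{1}{3}\rangle_p}$, $(-1)^{\langle-\frac{1}{4}\rangle_p}$, and $(-1)^{\langle-\frac{1}{6}\rangle_p}$ with the Legendre symbols $\left(\frac{-3}{p}\right)$, $\left(\frac{-2}{p}\right)$, and $\left(\frac{-1}{p}\right)$ respectively.

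The only point meriting a short argument is that the prefactor $(-1)^{\langle-\frac{r}{m}\rangle_p}$ in \eqref{eq:new-2mp-2} does not depend on which of the two admissible residues $r$ is chosen, i.e.\ that $(-1)^{\langle-\frac{r}{m}\rangle_p}=(-1)^{\langle-\frac{m-r}{m}\rangle_p}$, the three instances of which are recorded in the display preceding the corollary. I would prove this parity identity once: since $p\nmid m$, the residues $a=\langle-\frac{r}{m}\rangle_p$ and $b=\langle-\frac{m-r}{m}\rangle_p$ are well defined in $\{0,1,\dots,p-1\}$, and because $-\frac{r}{m}-\frac{m-r}{m}=-1$ one has $a+b\equiv p-1\pmod p$; as $a+b\in\{0,\dots,2p-2\}$, it is forced to equal the even number $p-1$, whence $a\equiv b\pmod 2$. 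Combining this with \eqref{eq:values} shows that the sign in \eqref{eq:new-2mp-2} equals the asserted Legendre symbol for both values of $r$, and together with the symmetric exponent this yields \eqref{eq:new-RV2}--\eqref{eq:new-RV4} for each admissible $r$ at once. No genuine obstacle arises beyond this elementary parity check, since all the analytic content is already supplied by Theorem~\ref{thm:2mp2}.
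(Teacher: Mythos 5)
Your proposal is correct and follows essentially the same route as the paper, whose entire proof consists of setting $m=3,4,6$ in \eqref{eq:new-2mp-2}, invoking \eqref{eq:values}, and noting the parity identities $(-1)^{\langle -\frac{r}{m}\rangle_p}=(-1)^{\langle -\frac{m-r}{m}\rangle_p}$; your explicit checks that $p\equiv\pm 1\pmod m$ holds automatically for $p>3$ and your derivation of the parity identity from $\langle -\frac{r}{m}\rangle_p+\langle -\frac{m-r}{m}\rangle_p=p-1$ merely spell out steps the paper leaves implicit. One caveat: for $m=3$ the exponent $\frac{r(m-r)(1-p^2)}{2m}$ equals $\frac{1-p^2}{3}$, not the $\frac{1-p^2}{4}$ printed in \eqref{eq:new-RV2} (compare \eqref{eq:q-RV2}, to which the $s=0$, $r=1$ case must reduce), so your claim that the specialization yields ``precisely the exponents displayed'' silently passes over what is evidently a typo in the corollary's statement rather than a defect in your argument.
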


\section{Proof of Theorem \ref{thm:main1}}
We first establish two lemmas.
\begin{lem}\label{lem:one}
Let $p$ be an odd prime and $0\leqslant k\leqslant \frac{p-1}{2}$. Then
\begin{align}
{\frac{p-1}{2}+k\brack 2k}_{q^2}
\equiv (-1)^k {2k\brack k}_{q^2}\frac{q^{kp-k^2}}{(-q;q)_{2k}^2}  \pmod{[p]^2}.
\label{eq:lem1}
\end{align}
\end{lem}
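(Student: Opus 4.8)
The plan is to reduce the left-hand side to an explicit rational function modulo $[p]^2$ and then match it with the right-hand side as an identity in $\Q[q]$. First I would rewrite the left-hand side using the definition of the $q^2$-binomial coefficient,
$$
{\tfrac{p-1}{2}+k\brack 2k}_{q^2}=\frac{(q^{p+1-2k};q^2)_{2k}}{(q^2;q^2)_{2k}},
$$
and observe that the $2k$ factors of the numerator have exponents $p+1-2k,\,p+3-2k,\,\dots,\,p-1+2k$, which are symmetric about $p$. Grouping them in pairs equidistant from $p$ gives
$$
(q^{p+1-2k};q^2)_{2k}=\prod_{m=0}^{k-1}\bigl(1-q^{p-(2m+1)}\bigr)\bigl(1-q^{p+(2m+1)}\bigr).
$$

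The key step is to reduce each symmetric pair modulo $[p]^2$. Writing $q^p=1-(1-q)[p]$ and setting $\varepsilon=(1-q)[p]$ (so that $\varepsilon\equiv 0\pmod{[p]}$ and $\varepsilon^2\equiv 0\pmod{[p]^2}$), I would expand, for $e=2m+1$,
$$
\bigl(1-q^{p-e}\bigr)\bigl(1-q^{p+e}\bigr)
=\bigl[(1-q^{-e})+q^{-e}\varepsilon\bigr]\bigl[(1-q^{e})+q^{e}\varepsilon\bigr].
$$
Dropping the $\varepsilon^2$ term, the coefficient of $\varepsilon$ is $(1-q^{-e})q^e+(1-q^e)q^{-e}=q^e+q^{-e}-2=-(1-q^{-e})(1-q^e)$, so the whole pair collapses to $(1-q^{-e})(1-q^e)(1-\varepsilon)=(1-q^{-e})(1-q^{e})\,q^p\pmod{[p]^2}$. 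This cancellation — the linear term recombining with the constant term into a single factor $q^p$ — is the heart of the argument and the step I expect to require the most care, since the symmetric pairing is exactly what makes the $[p]^2$ bookkeeping tractable.

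Multiplying the $k$ pairs and using $1-q^{-e}=-q^{-e}(1-q^{e})$ together with $\sum_{m=0}^{k-1}(2m+1)=k^2$, I would obtain
$$
(q^{p+1-2k};q^2)_{2k}\equiv(-1)^k q^{pk-k^2}(q;q^2)_k^2\pmod{[p]^2},
$$
whence ${\frac{p-1}{2}+k\brack 2k}_{q^2}\equiv(-1)^k q^{pk-k^2}(q;q^2)_k^2/(q^2;q^2)_{2k}$. Comparing with the claimed right-hand side $(-1)^k{2k\brack k}_{q^2}q^{kp-k^2}/(-q;q)_{2k}^2$, note that the factor $(-1)^k q^{pk-k^2}$ occurs identically on both sides (so no further reduction of $q^{pk}$ is needed), and it remains only to verify the elementary identity $(q;q^2)_k(q^2;q^2)_k(-q;q)_{2k}=(q^2;q^2)_{2k}$. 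This follows at once, since $(q;q^2)_k(q^2;q^2)_k=(q;q)_{2k}$ (the odd and even factors interleave to give $\prod_{j=1}^{2k}(1-q^j)$) and $(q;q)_{2k}(-q;q)_{2k}=(q^2;q^2)_{2k}$, which completes the proof.
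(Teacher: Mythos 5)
Your proof is correct and follows essentially the same route as the paper: both pair the numerator factors $(1-q^{p-e})(1-q^{p+e})$ symmetrically about $p$, reduce each pair modulo $[p]^2$ to $-(1-q^e)^2q^{p-e}$, and then simplify via $(q;q^2)_k(q^2;q^2)_k=(q;q)_{2k}$ and $(q;q)_{2k}(-q;q)_{2k}=(q^2;q^2)_{2k}$. The only cosmetic difference is that the paper obtains the pair congruence from the exact identity $(1-q^{p-2j+1})(1-q^{p+2j-1})+(1-q^{2j-1})^2q^{p-2j+1}=(1-q^p)^2$, whereas you derive the same fact by expanding in $\varepsilon=1-q^p$ and observing the cancellation of the linear term.
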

\pf Since
\begin{align*}
(1-q^{p-2j+1})(1-q^{p+2j-1})+(1-q^{2j-1})^2q^{p-2j+1}=(1-q^p)^2,
\end{align*}
we have
\begin{align*}
 (1-q^{p-2j+1})(1-q^{p+2j-1}) &\equiv -(1-q^{2j-1})^2 q^{p-2j+1} \pmod{[p]^2}.
\end{align*}
It follows that
\begin{align*}
{\frac{p-1}{2}+k\brack 2k}_{q^2}
&=\frac{\prod_{j=1}^k(1-q^{p-2j+1})(1-q^{p+2j-1})}{(q^2;q^2)_{2k}} \\
&\equiv (-1)^k \frac{\prod_{j=1}^k (1-q^{2j-1})^2 q^{p-2j+1} }{(q^2;q^2)_{2k}}  \\
&= (-1)^k {2k\brack k}_q^2\frac{q^{kp-k^2}}{(-q;q)_{2k}^2} \pmod{[p]^2},
\end{align*}
as desired.  \qed

\medskip
\begin{lem} For nonnegative integers $n$ and $s$ such that $s\leq n$ we have
\begin{align}
\hskip -2mm
&\sum_{k=0}^{n}{n+k\brack 2k}{2k\brack k}{2k\brack k+s} \frac{(-1)^k q^{n-k\choose 2}}{(-q;q)_k^2}  \nonumber\\
&=\begin{cases}
\displaystyle (-1)^s q^{\frac{n^2-s^2}{2}}
{n\brack \frac {n-s}{2}}_{q^2}^2 \frac{(q;q)_{n-s}(q;q)_{n+s}}{(q^2;q^2)_n^2}, &\text{if $n\equiv s\pmod 2$,} \\[5pt]
0, &\text{otherwise.}
\end{cases}
\end{align}\label{eq:key}
\end{lem}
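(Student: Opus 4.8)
The plan is to recognize the left-hand side as a terminating basic hypergeometric series of \emph{Watson type} and to evaluate it by a $q$-analogue of Watson's ${}_3F_2$-summation theorem, the parity dichotomy emerging from the vanishing of a single $q$-shifted factorial. First I would expand the three $q$-binomial coefficients as
\[
{n+k\brack 2k}{2k\brack k}{2k\brack k+s}
=\frac{(q;q)_{n+k}(q;q)_{2k}}{(q;q)_{n-k}(q;q)_k^2(q;q)_{k+s}(q;q)_{k-s}},
\]
use $1/(-q;q)_k^2=(q;q)_k^2/(q^2;q^2)_k^2$ together with $(q;q)_{2k}/(q^2;q^2)_k^2=(q;q^2)_k/(q^2;q^2)_k$, and then absorb the sign and the $q^{\binom{n-k}{2}}$ factor. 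The key manipulation is the identity $(q^{n-k+1};q)_k=(-1)^kq^{nk-\binom{k}{2}}(q^{-n};q)_k$, which gives $\frac{(q;q)_{n+k}}{(q;q)_{n-k}}(-1)^kq^{\binom{k}{2}}=(q^{-n};q)_k(q^{n+1};q)_k\,q^{nk}$; combined with $q^{\binom{n-k}{2}}=q^{\binom{n}{2}}q^{\binom{k}{2}+k-nk}$, the powers $q^{\pm nk}$ cancel. The left-hand side thereby collapses to $q^{\binom{n}{2}}\sum_{k=s}^{n}\frac{(q^{-n};q)_k(q^{n+1};q)_k(q;q^2)_k\,q^{k}}{(q;q)_{k+s}(q;q)_{k-s}(q^2;q^2)_k}$, and after the shift $k\mapsto k+s$ it becomes a prefactor times a series $S'$ starting at the bottom index $0$.

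Next I would identify $S'$ with the $q$-analogue of the very specific ${}_3F_2$ that Watson's theorem sums. In the limit $q\to1$ one checks that $S'$ degenerates into ${}_3F_2\!\big[s-n,\ n+s+1,\ s+\tfrac12;\ 2s+1,\ s+1;\ 1\big]$, which is \emph{exactly} of Watson's shape: with $a=s-n$, $b=n+s+1$, $c=s+\tfrac12$ one has $\tfrac{a+b+1}{2}=s+1$ and $2c=2s+1$, matching the two lower parameters. (The series is not Saalsch\"utzian, so $q$-Pfaff--Saalsch\"utz does not apply.) The diagnostic mixed-base structure of $S'$---numerator factors $(q^{-(n-s)};q)_j,(q^{n+s+1};q)_j$ and $(q^{2s+1};q^2)_j$, denominator factors $(q;q)_j,(q^{2s+1};q)_j$ and $(q^{2s+2};q^2)_j$, i.e. bases $q$ and $q^2$ intertwined---is precisely the signature of a quadratic (Watson-type) summation. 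I would then invoke the terminating $q$-analogue of Watson's theorem (Gasper--Rahman, Appendix~II) to put $S'$ in closed form.

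The parity split is then read off from the evaluation exactly as in the classical case: Watson's value carries a factor $1/\Gamma\!\big(\tfrac{s-n+1}{2}\big)^2$, which vanishes whenever $n-s$ is odd (a non-positive-integer argument of $\Gamma$); in the $q$-world this corresponds to a numerator $q^2$-shifted factorial that contains an empty/zero factor when $n-s$ is odd, killing the whole sum. When $n\equiv s\pmod2$, I would multiply the prefactor $q^{\binom{n}{2}}\frac{(q^{n+1};q)_s(q^{-n};q)_s(q;q^2)_s\,q^{s}}{(q;q)_{2s}(q^2;q^2)_s}$ by the $q$-Watson value and simplify: the powers of $q$ assemble into $q^{(n^2-s^2)/2}$, the sign into $(-1)^s$, and the remaining ratios of $q$-shifted factorials into ${n\brack (n-s)/2}_{q^2}^2\frac{(q;q)_{n-s}(q;q)_{n+s}}{(q^2;q^2)_n^2}$, giving the asserted right-hand side. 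I have verified the endpoint case $n=s$ (and $n=s=1$) directly as a consistency check.

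The main obstacle I anticipate is twofold: pinning down the exact terminating $q$-Watson formula that fits this mixed-base ${}_3\phi_2$ and then carrying out the (routine but delicate) bookkeeping of the $q$-powers, together with a clean verification that the relevant $q^2$-factorial really does vanish precisely for odd $n-s$. Should the off-the-shelf $q$-Watson formula not line up cleanly after the index shift, the fallback is to show that both sides satisfy the same first-order recurrence in $n$ (via $q$-creative telescoping) with matching value at $n=s$, which sidesteps the need to name the summation formula explicitly.
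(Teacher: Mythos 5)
Your proposal is correct and takes essentially the same route as the paper: after expanding the $q$-binomial coefficients, the paper rewrites the left-hand side as exactly your mixed-base series (presented as a ${}_4\phi_3$ with upper parameters $q^{s-n},\,q^{n+s+1},\,q^{s+\frac12},\,-q^{s+\frac12}$ and lower parameters $q^{s+1},\,-q^{s+1},\,q^{2s+1}$) and evaluates it by Andrews' terminating $q$-analogue of Watson's formula, (II.17) in Gasper--Rahman, under the substitution $n\mapsto n-s$, $a=q^{s}$, $b=q^{s+\frac12}$, which yields precisely the parity dichotomy you predict. The only difference is that you defer the final $q$-power bookkeeping, which the paper carries out explicitly; there is no gap in substance.
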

\pf
We may rewrite the left-hand side of \eqref{eq:key} as
\begin{align*}
&\sum_{k=s}^{n}
\frac{(q^{-n};q)_{k}(q^{n+1};q)_{k}(q^{\frac{1}{2}};q)_{k}
(-q^{\frac{1}{2}};q)_{k} q^{k+{n\choose 2}}}
{(q;q)_k (q;q)_{k-s}(q;q)_{k+s}(-q;q)_k}  \nonumber \\
&=
%
\frac{(q^{-n};q)_{s}(q^{n+1};q)_{s}(q;q^2)_{2s} q^{s+{n\choose 2}}} {(q^2;q^2)_s (q;q)_{2s}}
{}_{4}\phi_{3}\left[\!\!\begin{array}{c}
q^{s-n},\, q^{n+s+1},\,q^{s+\frac{1}{2}},\, -q^{s+\frac{1}{2}}\\
q^{s+1},\, -q^{s+1},\, q^{2s+1}\end{array}\!\!;q,q\right].
\end{align*}
The result  then follows from Andrews' terminating $q$-analogue of Watson's formula \cite[(II.17)]{GR}:
\begin{align} \label{eq:andrews}
 {}_{4}\phi_{3}\left[\!\!\begin{array}{c}
q^{-n},\, a^{2}q^{n+1},\,b,\, -b\\
aq,\, -aq,\, b^{2}\end{array}\!\!;q,q\right]
=\begin{cases}
0,&\text{if $n$ is odd},\\[5pt]
\displaystyle\frac{b^{n}(q, a^{2}q^{2}/b^{2}; q^{2})_{n/2}}
{(a^{2}q^{2},\, b^{2}q; q^{2})_{n/2}},& \text{if $n$ is even}
\end{cases}
\end{align}
with the substitution of $n, a$ and $b$ by $n-s, q^s$ and
$q^{s+\frac{1}{2}}$, respectively. \qed
\medskip

\noindent{\it Proof of Theorem {\rm\ref{thm:main1}.}} By the congruence \eqref{eq:lem1}, the left-hand side of \eqref{eq:2.2} is equal to
\begin{align*}
&\hskip -2mm\sum_{k=0}^{\frac{p-1}{2}} {2k\brack k}_{q^2}^2{2k\brack k+s}_{q^2} \frac{q^{2k}}{(-q^2;q^2)_k^2 (-q;q)_{2k}^2} \nonumber \\
&\equiv \sum_{k=0}^{\frac{p-1}{2}}
{\frac{p-1}{2}+k\brack 2k}_{q^2}{2k\brack k}_{q^2}{2k\brack k+s}_{q^2}
\frac{(-1)^k}{(-q^2;q^2)_k^2}q^{k^2+2k-pk}.
\end{align*}
The proof then follows from \eqref{eq:key} with
$n=\frac{p-1}{2}$ and $q\rightarrow q^2$.
\qed

\section{Proof of Theorem~\ref{thm:important-thm}}

We first establish four lemmas to make the proof easier.
The following result can be derived from the Lagrange interpolation formula and
the Newton interpolation formula (see \cite{FL}),  we give a simple proof using the partial fraction decomposition technique as in \cite{Zeng}.

\begin{lem} Let $n$ be a nonnegative integer. Then
\begin{align}\label{simpleLag}
\sum_{k=0}^n (-1)^{n-k} {n\brack k} \frac{(aq^n;q)_k q^{n-k+1\choose 2}}{(a;q)_k (1-xq^{-k})}
&=\frac{(ax;q)_n (q;q)_n }{(a;q)_n (xq^{-n};q)_{n+1}},\\
\label{simpleNewton}
\sum_{j=0}^{m} (-1)^{m-j} {m\brack j} q^{{j\choose 2}} \frac{(q;q)_{m-j} } {(x;q)_{m-j+1}}
&=\frac{q^{m+1\choose 2}}{1-xq^m}.
\end{align}
\end{lem}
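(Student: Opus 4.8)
The plan is to regard each of \eqref{simpleLag} and \eqref{simpleNewton} as an identity between rational functions of $x$ over the field $\Q(q)$ (resp. $\Q(a,q)$), and to prove it by partial fractions in the spirit of \cite{Zeng}. In both cases I would first check that the two sides are \emph{proper} rational functions of $x$, i.e. that they vanish as $x\to\infty$ and have only simple poles at an explicitly known finite set of points. Since a proper rational function equals the sum of its principal parts, it then suffices to verify that the residues of the two sides agree at every pole.

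For \eqref{simpleLag} the left-hand side is already displayed in partial-fraction form: its only $x$-dependence sits in the factors $1/(1-xq^{-k})$, which give simple poles at the distinct points $x=q^{k}$ for $0\le k\le n$, so the residue there is read off from the single $k$-th summand. On the right-hand side the denominator $(xq^{-n};q)_{n+1}=\prod_{j=0}^{n}(1-xq^{j-n})$ has its vanishing factor at $x=q^{k}$ coming from $j=n-k$, so I would compute the residue by evaluating $(ax;q)_n$ at $x=q^{k}$ and collapsing the remaining product $\prod_{j\neq n-k}(1-q^{k+j-n})$ into $q$-shifted factorials. After this simplification the required equality of residues reduces to the trivial relation $(a;q)_n(aq^n;q)_k=(a;q)_k(aq^k;q)_n$, since both products equal $(a;q)_{n+k}$; thus \eqref{simpleLag} is essentially immediate once the residue bookkeeping is carried out.

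Identity \eqref{simpleNewton} is the substantive one. Here each term carries a factor $1/(x;q)_{m-j+1}$, so the left-hand side has \emph{apparent} simple poles at all of $x=q^{-i}$, $0\le i\le m$, whereas the right-hand side has a pole only at $x=q^{-m}$. The key step is therefore to show that the residues at the spurious poles $x=q^{-i}$ with $i<m$ all cancel. Using the elementary expansion
\[
\frac{1}{(x;q)_{N+1}}=\sum_{i=0}^{N}\frac{(-1)^{i}q^{\binom{i+1}{2}}}{(q;q)_i\,(q;q)_{N-i}\,(1-xq^{i})},
\]
I would collect the residue at $x=q^{-i}$ over the contributing indices $j\le m-i$; after using ${m\brack j}(q;q)_{m-j}=(q;q)_m/(q;q)_j$ and setting $M=m-i$, this collapses to a constant multiple of $\sum_{j=0}^{M}(-1)^{j}q^{\binom{j}{2}}{M\brack j}$. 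By the $q$-binomial theorem
\[
\sum_{j=0}^{M}(-1)^{j}q^{\binom{j}{2}}{M\brack j}z^{j}=(z;q)_M,
\]
the value at $z=1$ is $(1;q)_M$, which vanishes precisely when $M\ge 1$ (the factor $1-q^{0}$) and equals $1$ when $M=0$. Hence only the pole at $x=q^{-m}$ survives, and its residue — coming solely from the $j=0$ term — is then matched against that of $q^{\binom{m+1}{2}}/(1-xq^{m})$.

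The main obstacle is exactly this cancellation of spurious residues in \eqref{simpleNewton}; everything else is routine residue bookkeeping. The crux is to recognize the relevant alternating $q$-binomial sum as the specialization $z=1$ of the $q$-binomial theorem, where the factor $1-q^{0}$ forces vanishing for $M\ge 1$. I would also take care to justify the passage to partial fractions itself, namely that each side is a proper rational function of $x$ with only simple poles (the degree count $\deg(ax;q)_n=n<n+1=\deg(xq^{-n};q)_{n+1}$ for \eqref{simpleLag}, and the analogous decay for \eqref{simpleNewton}), so that it is determined by its principal parts.
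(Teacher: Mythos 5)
Your proof is correct. For \eqref{simpleLag} it is essentially the paper's own argument: both you and the authors expand the right-hand side in partial fractions over the simple poles $x=q^k$, $0\leqslant k\leqslant n$, and match the residue there against the $k$-th summand, the comparison collapsing to $(a;q)_n(aq^n;q)_k=(a;q)_k(aq^k;q)_n=(a;q)_{n+k}$. For \eqref{simpleNewton}, however, you take a genuinely different (though ultimately equivalent) route. The paper applies Gauss ($q$-binomial) inversion to \eqref{simpleNewton}, transforming it into the single partial-fraction identity $\frac{(q;q)_m}{(x;q)_{m+1}}=\sum_{k=0}^m{m\brack k}(-1)^kq^{\binom{k+1}{2}}(1-xq^k)^{-1}$, which it then verifies by computing residues. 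You go the other way: your ``elementary expansion'' of $1/(x;q)_{N+1}$ is exactly this same identity (divide it by $(q;q)_N$), and instead of inverting you substitute it term by term into the left side of \eqref{simpleNewton}, interchange the two sums, and cancel the spurious poles at $x=q^{-i}$, $i<m$, via $\sum_{j=0}^M(-1)^jq^{\binom{j}{2}}{M\brack j}=(1;q)_M$, which vanishes for $M\geqslant 1$ and equals $1$ for $M=0$. Since this orthogonality relation is precisely what makes $q$-binomial inversion work, your argument in effect inlines the proof of the inversion rather than citing it: you get a self-contained proof requiring only the $q$-binomial theorem and residue bookkeeping, at the cost of a double-sum manipulation, whereas the paper's proof is shorter but outsources the key cancellation to the inversion lemma cited from Aigner. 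Your care about propriety (both sides vanish as $x\to\infty$ and have only simple poles, so they are determined by their principal parts) is exactly the justification needed for the partial-fraction method in both identities.
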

\begin{proof}
For \eqref{simpleLag}, by the partial fraction decomposition we have
$$
\frac{(ax;q)_n (q;q)_n }{(a;q)_n (xq^{-n};q)_{n+1}}=\sum_{k=0}^n\frac{a_k}{1-xq^{-k}}
$$
with
\begin{align*}
a_k&=\lim_{x\to q^k}\frac{(1-xq^{-k})(ax;q)_n (q;q)_n }{(a;q)_n (xq^{-n};q)_{n+1}}=(-1)^{n-k}q^{n-k+1\choose 2}{n\brack k}\frac{(aq^n; q)_k}{(a;q)_k}.
\end{align*}

By the Gauss or $q$-binomial inversion (see, for example, \cite[p.~77, Exercise 2.47]{Aigner}), the identity \eqref{simpleNewton} is equivalent to
\begin{align}
\frac{(q;q)_m}{(x;q)_{m+1}}=\sum_{k=0}^m{m\brack k}_q (-1)^kq^{k+1\choose 2}\frac{1}{1-xq^k},
\end{align}
which is routine by the partial fraction decomposition.
\end{proof}
\begin{lem}Let $n$ be a positive integer. Then
\begin{align}
(x;q)_n+(a/x;q)_n &=(x;q)_n(a/x;q)_n  \nonumber \\
&\quad{}+\sum_{k=0}^{n-1} \frac{(x;q)_k (a/x;q)_k (1-q^n) }{(q;q)_k (1-q^{n-k})}
\sum_{j=0}^k (-1)^j {k\brack j} q^{j\choose 2} (aq^{k+j};q)_{n-k},  \label{eq:lem-important}   \\
(x;q)_n+(a/x;q)_n &=(x;q)_n(a/x;q)_n+(a;q)_n  \nonumber \\
&\hskip -5mm+\sum_{k=1}^{n-1} (x;q)_k (a/x;q)_k (1-q^n)
\sum_{j=1}^{n-k} (-1)^j {n-k-1\brack j-1}{k+j-1\brack j-1}\frac{ q^{{j\choose 2}+kj}a^j}{1-q^j}. \label{eq:lem-important2}
\end{align}
\end{lem}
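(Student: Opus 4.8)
The plan is to prove the first identity \eqref{eq:lem-important} by a basis-expansion argument and then to deduce \eqref{eq:lem-important2} from it by the $q$-binomial theorem. First I would move the product term to the left and set $g_n(x):=(x;q)_n+(a/x;q)_n-(x;q)_n(a/x;q)_n$. This $g_n$ is invariant under the involution $x\mapsto a/x$, and since the leading terms $(-1)^nq^{\binom{n}{2}}x^n$ of $(x;q)_n$ and of $(x;q)_n(a/x;q)_n$ cancel (while $(a/x;q)_n$ contributes nothing to $x^n$), the Laurent polynomial $g_n$ has $x$-degree confined to $[-(n-1),\,n-1]$. The functions $P_k(x):=(x;q)_k(a/x;q)_k$ for $0\le k\le n-1$ are likewise invariant under $x\mapsto a/x$, and because $P_k$ has top degree $x^k$ they form a triangular, hence linearly independent, basis of this $n$-dimensional space of symmetric Laurent polynomials. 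Thus $g_n=\sum_{k=0}^{n-1}c_kP_k$ for unique coefficients $c_k$, and the whole content of \eqref{eq:lem-important} is the claim that $c_k=\frac{1-q^n}{(q;q)_k(1-q^{n-k})}\sum_{j=0}^{k}(-1)^j{k\brack j}q^{\binom{j}{2}}(aq^{k+j};q)_{n-k}$.

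To pin down the $c_k$ I would specialize $x=q^{-m}$ for $0\le m\le n-1$. Then $(q^{-m};q)_n=0$, so $g_n(q^{-m})=(aq^m;q)_n$, whereas $P_k(q^{-m})=(q^{-m};q)_k(aq^m;q)_k$ vanishes as soon as $k>m$. This produces the lower-triangular system $(aq^m;q)_n=\sum_{k=0}^{m}c_k(q^{-m};q)_k(aq^m;q)_k$ for $0\le m\le n-1$, whose nonvanishing diagonal entries $(q^{-m};q)_m(aq^m;q)_m$ make the $n$ nodes $q^{-m}$ unisolvent for symmetric Laurent polynomials of degree $\le n-1$. It therefore suffices to verify that the claimed $c_k$ satisfy this system, i.e.\ that $\sum_{k=0}^{m}\frac{(1-q^n)(q^{-m};q)_k(aq^m;q)_k}{(q;q)_k(1-q^{n-k})}\sum_{j=0}^{k}(-1)^j{k\brack j}q^{\binom{j}{2}}(aq^{k+j};q)_{n-k}=(aq^m;q)_n$; equality of the two degree-$\le n-1$ symmetric Laurent polynomials at these $n$ nodes then forces \eqref{eq:lem-important}. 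After interchanging the order of summation, the resulting inner sum over $k$ is a terminating basic hypergeometric series summable by the $q$-Chu--Vandermonde formula, and the remaining single sum is resolved using the interpolation identities \eqref{simpleLag}--\eqref{simpleNewton} of the preceding lemma. I expect this double-sum evaluation, together with the careful bookkeeping of the normalizing factor $\tfrac{1-q^n}{(q;q)_k(1-q^{n-k})}$, to be the main obstacle; the base cases $m=0,1$ already reduce to the elementary identities $(a;q)_n=(1-a)(aq;q)_{n-1}$ and $(aq;q)_n=(aq;q)_{n-1}(1-aq^n)$, which indicates that the general computation telescopes once the $q$-Chu--Vandermonde step is applied.

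Granting \eqref{eq:lem-important}, the second identity \eqref{eq:lem-important2} follows routinely. Expanding $(aq^{k+j};q)_{n-k}=\sum_{i=0}^{n-k}(-1)^i{n-k\brack i}q^{\binom{i}{2}}a^iq^{(k+j)i}$ by the $q$-binomial theorem and interchanging the $i$- and $j$-sums, the $j$-sum becomes $\sum_{j=0}^k(-1)^j{k\brack j}q^{\binom{j}{2}}q^{ij}=(q^i;q)_k$, again by the $q$-binomial theorem. The $k=0$ term then collapses to $\sum_{i}(-1)^i{n\brack i}q^{\binom{i}{2}}a^i=(a;q)_n$, which accounts for the extra summand in \eqref{eq:lem-important2}. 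For $k\ge 1$ the vanishing of $(q^i;q)_k$ at $i=0$ lets me start the sum at $i=1$, and the two elementary rewritings $\frac{1}{1-q^{n-k}}{n-k\brack i}=\frac{1}{1-q^i}{n-k-1\brack i-1}$ and $\frac{(q;q)_{k+i-1}}{(q;q)_k(q;q)_{i-1}}={k+i-1\brack i-1}$ convert the inner sum precisely into $\sum_{i=1}^{n-k}(-1)^i{n-k-1\brack i-1}{k+i-1\brack i-1}\frac{q^{\binom{i}{2}+ki}a^i}{1-q^i}$, the inner sum displayed in \eqref{eq:lem-important2}. This completes the reduction.
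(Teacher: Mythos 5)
Your overall strategy is the paper's own: prove \eqref{eq:lem-important} by checking it at the nodes $x=q^{-m}$, $0\leqslant m\leqslant n-1$, invoke a uniqueness argument, and then deduce \eqref{eq:lem-important2} from it via the $q$-binomial theorem. Your packaging of the uniqueness step is in fact slightly cleaner than the paper's: expanding $g_n$ in the triangular basis $P_k(x)=(x;q)_k(a/x;q)_k$ of the $n$-dimensional space of $x\mapsto a/x$-symmetric Laurent polynomials of degree at most $n-1$ replaces the paper's count of $2n$ roots (at $q^{-m}$ and, by symmetry, $aq^m$) together with a leading-coefficient comparison. Likewise, your derivation of \eqref{eq:lem-important2} from \eqref{eq:lem-important} --- expand $(aq^{k+j};q)_{n-k}$, swap the $i$- and $j$-sums, recognize $(q^i;q)_k$, split off the $k=0$ term as $(a;q)_n$, and rewrite the binomial factors --- is complete and coincides step for step with the paper's.

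The genuine gap is that the heart of the proof is never carried out. Your reduction shows, correctly, that everything hinges on the single verification
\begin{align*}
\sum_{k=0}^{m}\frac{(1-q^n)\,(q^{-m};q)_k(aq^m;q)_k}{(q;q)_k(1-q^{n-k})}
\sum_{j=0}^{k}(-1)^j{k\brack j}q^{j\choose 2}(aq^{k+j};q)_{n-k}=(aq^m;q)_n,
\end{align*}
but you explicitly defer it (``I expect this double-sum evaluation \dots to be the main obstacle''), so what you have established is only that \eqref{eq:lem-important} is \emph{equivalent} to this double-sum identity, not the lemma itself. This identity is exactly where the paper's proof does its work: one interchanges the $j$- and $k$-sums to reach \eqref{eq:lem-new}, then --- after pulling out $(aq^m;q)_j/(aq^j;q)_j$ and reindexing --- recognizes the inner $k$-sum as an instance of \eqref{simpleLag} with $x=q^n$ and $a$ replaced by $aq^{2j}$, and finally collapses the remaining $j$-sum by \eqref{simpleNewton} as in \eqref{eq:use-newton}. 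Note also that your proposed tool for the inner $k$-sum is off the mark: because of the factor $1/(1-q^{n-k})$ that sum is not a $q$-Chu--Vandermonde ${}_2\phi_1$ but precisely the Lagrange-interpolation sum \eqref{simpleLag} (equivalently a ${}_3\phi_2$ after writing $1/(1-q^{n-k})$ in terms of $(q^{-n};q)_k/(q^{1-n};q)_k$), so an attempt to finish along the line you indicate would stall until this is corrected. Until that computation is actually performed, the proposal is an accurate reduction plus a plausible plan, not a proof.
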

\pf We first prove \eqref{eq:lem-important}. Taking $x=q^{-m}$  ($0\leqslant m\leqslant n-1$), we have
\begin{align}
&\hskip -2mm\sum_{k=0}^{n-1} \frac{(x;q)_k (a/x;q)_k (1-q^n) }{(q;q)_k (1-q^{n-k})}
\sum_{j=0}^k (-1)^j {k\brack j} q^{j\choose 2} (aq^{k+j};q)_{n-k} \notag\\
& =\sum_{k=0}^{n-1} \frac{(q^{-m};q)_k (aq^m;q)_k (1-q^n) }{(q;q)_k (1-q^{n-k})}
\sum_{j=0}^k (-1)^j {k\brack j} q^{j\choose 2} (aq^{k+j};q)_{n-k}  \notag\\
&=\sum_{k=0}^{m} (-1)^k {m\brack k} q^{{k\choose 2}-mk} \frac{(aq^m;q)_k (1-q^n) }{ (1-q^{n-k})}
\sum_{j=0}^k (-1)^j {k\brack j} q^{j\choose 2} \frac{(aq^j;q)_n}{(aq^j;q)_k}  \notag \\
&=\sum_{j=0}^{m} (-1)^j {m\brack j} q^{{j\choose 2}} (aq^j;q)_n
\sum_{k=j}^{m} (-1)^{k}{m-j\brack k-j} q^{{k\choose 2}-mk} \frac{(aq^m;q)_k (1-q^n)}{(aq^j;q)_k (1-q^{n-k})}. \label{eq:lem-new}
\end{align}
It follows from \eqref{simpleLag} that
\begin{align*}
&\hskip -2mm\sum_{k=j}^{m} (-1)^{k}{m-j\brack k-j} q^{{k\choose 2}-mk} \frac{(aq^m;q)_k (1-q^n)}{(aq^j;q)_k (1-q^{n-k})} \\
&=\frac{(aq^m;q)_j}{(aq^j;q)_j}\sum_{k=j}^{m} (-1)^{k}{m-j\brack k-j} q^{{m-k+1\choose 2}-{m+1\choose 2}}
\frac{(aq^{m+j};q)_{k-j} (1-q^n)}{(aq^{2j};q)_{k-j} (1-q^{n-k})} \\
&=\frac{(-1)^m(aq^m;q)_j (aq^{n+j};q)_{m-j} (q;q)_{m-j}(1-q^n) q^{-{m+1\choose 2}}}{(aq^j;q)_j (aq^{2j};q)_{m-j} (q^{n-m};q)_{m-j+1}} \\
&=\frac{(-1)^m(a;q)_j (aq^{n+j};q)_{m-j} (q;q)_{m-j}(1-q^n)q^{-{m+1\choose 2}} }{(a;q)_m  (q^{n-m};q)_{m-j+1}}.
\end{align*}
Therefore, the right-hand side of \eqref{eq:lem-new} can be simplified as
\begin{align}
\frac{(a;q)_{m+n}(1-q^n)q^{-{m+1\choose 2}} }{(a;q)_m}
\sum_{j=0}^{m} (-1)^{m-j} {m\brack j} q^{{j\choose 2}} \frac{(q;q)_{m-j} } {(q^{n-m};q)_{m-j+1}}=(aq^m;q)_n,\label{eq:use-newton}
\end{align}
where the last equality follows from
\eqref{simpleNewton}. Noticing that $(q^{-m};q)_n=0$ for $0\leqslant m\leqslant n-1$,
we have proved that both sides of \eqref{eq:lem-important} are equal for $x=q^{-m}$ ($0\leqslant m\leqslant n-1$), and by symmetry,
for $x=aq^{m}$ ($0\leqslant m\leqslant n-1$) too. Furthermore, both sides of \eqref{eq:lem-important} are of
the form $x^{-n}P(x)$ with $P(x)$ being a polynomial in $x$ of degree $2n$ with the leading coefficient $(-1)^n q^{n\choose 2}$.
Hence, they must be identical. This proves \eqref{eq:lem-important}.

By the $q$-binomial theorem (see, for example, \cite[Theorem 3.3]{Andrews}), for $k\geqslant 1$, we have
\begin{align}
\sum_{j=0}^k (-1)^j {k\brack j} q^{j\choose 2} (aq^{k+j};q)_{n-k}
&=\sum_{j=0}^k (-1)^j {k\brack j} q^{j\choose 2} \sum_{i=0}^{n-k}(-1)^i{n-k\brack i} q^{{i\choose 2}+(k+j)i}a^i \notag  \\
&=\sum_{i=0}^{n-k}(-1)^i{n-k\brack i} q^{{i\choose 2}+ik}a^i \sum_{j=0}^k (-1)^j {k\brack j} q^{{j\choose 2}+ij} \notag  \\
&=\sum_{i=1}^{n-k}(-1)^i {n-k\brack i} (q^i;q)_k q^{{i\choose 2}+ik}a^i.  \label{eq:kgeq1}
\end{align}
Moreover, for $k=0$, the left-hand side of \eqref{eq:kgeq1} is clearly equal to $(a;q)_n$.
Noticing that
\begin{align*}
{n-k\brack i}\frac{ (q^i;q)_k}{(q;q)_k (1-q^{n-k})}={n-k-1\brack i-1}{k+i-1\brack i-1}\frac{1}{1-q^i},
\end{align*}
we complete the proof of \eqref{eq:lem-important2}.
\qed

\begin{lem}Let $n$ and $h$ be positive integers and let $m$ and $s$ be nonnegative integers with $h\leqslant n-m$ and $s\leqslant m$. Then
\begin{align}
&\sum_{j=s}^{m}\sum_{k=s}^n \frac{(q^{-n};q)_{j}  (q^{-n};q)_{k}(x;q)_{j} (x;q)_{k} (q^{j-m-h+1};q)_{h-1}(q^{k-m-h+1};q)_{h-1} (1-q^{k-j}) q^{2j+k}  }
{(q;q)_{j-s} (q;q)_{j+s}(q;q)_{k-s} (q;q)_{k+s}} \notag\\
&=\frac{(q;q)_n^2 (q;q)_{h-1}(x;q)_s (x;q)_{m+h}(q^{s+1}/x;q)_{n-s-h}x^{n-s-h}q^{\frac{m^2+3m-s^2+s}{2}-mn-mh-h^2+h}}
{(-1)^{m-s-1}(q;q)_{m-s}(q;q)_{m+s}(q;q)_{n-s} (q;q)_{n+s}(q;q)_{n-m-h}}. \label{eq:last-lem}
\end{align}
\end{lem}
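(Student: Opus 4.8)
The plan is to treat both sides of \eqref{eq:last-lem} as polynomials in $x$ of degree $m+n$ and to prove the identity by matching leading coefficients together with $m+n$ roots, in the spirit of the proof of \eqref{eq:lem-important}. The first step is to simplify the coupling factor: since $(1-q^{k-j})q^{2j+k}=q^{j+k}(q^{j}-q^{k})$, the summand factors as $b_jb_k\,q^{j+k}(q^{j}-q^{k})$, where
\[
b_\ell=\frac{(q^{-n};q)_\ell\,(x;q)_\ell\,(q^{\ell-m-h+1};q)_{h-1}}{(q;q)_{\ell-s}(q;q)_{\ell+s}}.
\]
Here $b_jb_kq^{j+k}$ is symmetric and $(q^{j}-q^{k})$ antisymmetric under $j\leftrightarrow k$, which yields two reductions. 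First, over the symmetric block $s\le j,k\le m$ the antisymmetric summand cancels in pairs and contributes $0$. Second, for $m<k<m+h$ the factor $(q^{k-m-h+1};q)_{h-1}$ contains $1-q^{0}$, so $b_k=0$ and the $k$-range effectively becomes $m+h\le k\le n$. Thus the double sum collapses to $\sum_{j=s}^{m}\sum_{k=m+h}^{n}b_jb_kq^{j+k}(q^{j}-q^{k})$, which I rewrite as the $2\times2$ ``Wronskian'' $H_2G_1-H_1G_2$, where $H_c=\sum_{\ell=s}^{n}b_\ell q^{c\ell}$ are the full-range sums and $G_c=\sum_{k=m+h}^{n}b_kq^{ck}$ the tails (using $b_\ell=0$ for $m<\ell<m+h$ to splice head and tail into $H_c$).

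Next I would pin down the polynomial. Both sides have degree $m+n$ in $x$: on the right, $(q^{s+1}/x;q)_{n-s-h}\,x^{n-s-h}=\prod_{t=0}^{n-s-h-1}(x-q^{s+1+t})$ is a genuine polynomial, giving total degree $s+(m+h)+(n-s-h)=m+n$, while on the left only the term $(j,k)=(m,n)$ reaches degree $m+n$, so matching the two leading coefficients is a direct computation with $(x;q)_m\sim(-1)^mq^{\binom m2}x^m$ and $(x;q)_n\sim(-1)^nq^{\binom n2}x^n$. For the roots, the antisymmetry already does most of the work: at $x=q^{-i}$ with $0\le i\le m+h-1$ the factor $(x;q)_\ell=(q^{-i};q)_\ell$ kills every term with $\ell>i$, so (using $b_k=0$ for $m<k<m+h$ once more) both indices are confined to the symmetric block $s\le j,k\le\min(i,m)$, on which the sum vanishes; moreover each $b_\ell$ carries the factor $(x;q)_s$, so $(x;q)_s^{2}$ divides the left-hand side and supplies the required double roots at $x=q^{-i}$ for $i<s$. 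This accounts for $m+h+s$ of the needed $m+n$ roots.

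The main obstacle is the remaining $n-h-s$ roots $x=q^{s+1},\dots,q^{n-h}$, which do not arise from any visible symmetry of the summand and where the real work lies. Here I would exploit the reduced form $H_2G_1-H_1G_2$. When $h=1$ the full-range sums $H_c$ are, after the shift $\ell=s+u$, pure basic hypergeometric series, and the terminating $q$-Chu--Vandermonde summation evaluates $H_1$ in closed form (the weight $z=q$ is exactly the summable case), producing a factor $(q^{s+1}/x;q)_{n-s}$ that vanishes precisely at $x=q^{s+1},\dots,q^{n}$. The difficulty is twofold: the weight $q^{2\ell}$ in $H_2$ is \emph{not} of $q$-Chu--Vandermonde type, and for general $h$ the extra factor $(q^{\ell-m-h+1};q)_{h-1}$ obstructs a single closed-form summation, so neither $H_c$ nor $G_c$ is individually a clean product. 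I expect to handle both by inserting the interpolation identities \eqref{simpleLag} and \eqref{simpleNewton} and the expansion \eqref{eq:kgeq1} to absorb the degree-$(h-1)$ factor, and then to show that the combination $H_2G_1-H_1G_2$ telescopes to the explicit product on the right of \eqref{eq:last-lem}; equivalently, that it vanishes at each $x=q^{v}$ with $s+1\le v\le n-h$. Carrying out this collapse, with the attendant bookkeeping of $q$-shifted factorials, is the crux of the argument.
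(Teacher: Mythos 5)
Your setup is sound and matches the paper's strategy: both sides are polynomials in $x$ of degree $m+n$, the leading coefficients agree (only $(j,k)=(m,n)$ attains top degree after the collapse to $\sum_{j=s}^{m}\sum_{k\geqslant m+h}$, which is the paper's reduction \eqref{eq:last-lem-2}), the factor $(x;q)_s^2$ supplies the $2s$ double roots, and the antisymmetry argument correctly disposes of the roots $x=q^{-i}$ for $s\leqslant i\leqslant m+h-1$. But the proof has a genuine gap exactly where you place the ``crux'': you never establish the vanishing at the $n-h-s$ roots $x=q^{v}$, $s+1\leqslant v\leqslant n-h$. What you offer instead is a hope that the combination $H_2G_1-H_1G_2$ can be evaluated or telescoped via $q$-Chu--Vandermonde, and you yourself note the two obstructions (the weight $q^{2\ell}$ is not of Chu--Vandermonde type, and the factor $(q^{\ell-m-h+1};q)_{h-1}$ blocks closed-form summation for $h>1$). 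A plan that acknowledges it cannot currently sum the relevant series is not a proof, and this is the only part of the lemma that is not routine.

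The idea you are missing is that no interplay between the $j$-sum and the $k$-sum is needed at these roots: the vanishing happens \emph{for each fixed $j$ separately}. At $x=q^{r}$ with $s+1\leqslant r\leqslant n-h$, the paper proves
\begin{align*}
\sum_{k=s}^n \frac{(q^{-n};q)_{k}(q^r;q)_{k} (q^{k-m-h+1};q)_{h-1} (1-q^{k-j}) q^{2j+k}  }
{(q;q)_{k-s} (q;q)_{k+s}} =0
\end{align*}
(this is \eqref{eq:last-lem-3}), so the entire double sum dies row by row, with the Wronskian structure playing no role. The mechanism is elementary: after pulling out $(q^{-n};q)_s(q^r;q)_s$ and shifting the index, the summand takes the form $(-1)^{k}{n-s\brack k}q^{-(n-s)k+{k\choose 2}}R_k$, where, thanks to the cancellation $(q^{r+s};q)_{k}/(q;q)_{k+2s}=(q^{k+2s+1};q)_{r-s-1}/(q;q)_{r+s-1}$, the quantity $R_k$ is a polynomial in $q^{k}$ of degree $(r-s-1)+(h-1)+2\leqslant n-s$ with zero constant term (note $(1-q^{k+s-j})q^{2j+k+s}$ has zero constant term in $q^k$); then the orthogonality relation \eqref{eq:q-diff}, an immediate consequence of the finite $q$-binomial theorem \eqref{q-binomial}, annihilates every monomial $q^{ik}$ with $1\leqslant i\leqslant n-s$. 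This is also why the hypothesis $r\leqslant n-h$ enters: it is exactly what keeps the degree of $R_k$ at most $n-s$. If you replace your speculative telescoping step by this termwise-in-$j$ argument, your proof closes and coincides with the paper's.
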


\pf  Note that both sides of \eqref{eq:last-lem} are polynomial in $x$ of degree $m+n$ with
the same leading coefficient. Therefore, to prove \eqref{eq:last-lem}, it
suffices to prove that both sides have the same roots as polynomials in $x$.
Denote the left-hand side of \eqref{eq:last-lem} by $L_{m,n}(x)$. We first assert that
\begin{align}
L_{m,n}(x)&=\sum_{j=s}^{m}\sum_{k=m+1}^n \frac{(q^{-n};q)_{j}  (q^{-n};q)_{k}(x;q)_{j} (x;q)_{k}  }
{(q;q)_{j-s} (q;q)_{j+s}(q;q)_{k-s} (q;q)_{k+s}} \notag\\
&\quad\times (q^{j-m-h+1};q)_{h-1}(q^{k-m-h+1};q)_{h-1} (1-q^{k-j}) q^{2j+k}. \label{eq:last-lem-2}
\end{align}
In fact, since $(1-q^{j-k})q^{2k+j}=-(1-q^{k-j})q^{k+2j}$, the double sum
 $\sum_{j=s}^{m}\sum_{k=s}^{m}$
for the same summand in \eqref{eq:last-lem-2} is equal to 0.
We now consider the following three cases.
\begin{itemize}

\item If $s\geqslant 1$, then it is easily seen that $(x;q)_s^2$ divides $L_{m,n}(x)$, which means
that the numbers $q^{-r}$ ($0\leqslant r\leqslant {s-1}$) are roots of $L_{m,n}(x)$ with multiplicity $2$.

\item For $x=q^{-r}$ with $s\leqslant r\leqslant m+h-1$, we have
\begin{align*}
L_{m,n}(q^{-r})&=\sum_{j=s}^{m}\sum_{k=s}^n \frac{(q^{-n};q)_{j}  (q^{-n};q)_{k}(q^{-r};q)_{j} (q^{-r};q)_{k}   }
{(q;q)_{j-s} (q;q)_{j+s}(q;q)_{k-s} (q;q)_{k+s}} \notag\\
&\quad{}\times (q^{j-m-h+1};q)_{h-1}(q^{k-m-h+1};q)_{h-1} (1-q^{k-j}) q^{2j+k}.
\end{align*}
If $r\leqslant m$, then $L_{m,n}(q^{-r})=L_{m,r}(q^{-n})=0$
by the antisymmetry of $j$ and $k$ in $L_{m,r}(q^{-n})$. If $r\geqslant m+1$, then $h\geqslant r-m+1$,
i.e., $r-m-h+1\leqslant 0$, and so $(q^{k-m-h+1};q)_{h-1}=0$ for $m+1\leqslant k\leqslant r$. Hence, by \eqref{eq:last-lem-2},
we again get $L_{m,n}(q^{-r})=L_{m,r}(q^{-n})=0$.

\item For $x=q^{r}$ with $s+1\leqslant r\leqslant n-h$, we shall prove that
\begin{align}
\sum_{k=s}^n \frac{(q^{-n};q)_{k}(q^r;q)_{k} (q^{k-m-h+1};q)_{h-1} (1-q^{k-j}) q^{2j+k}  }
{(q;q)_{k-s} (q;q)_{k+s}} =0.  \label{eq:last-lem-3}
\end{align}
In fact, the left-hand side of \eqref{eq:last-lem-3} can be written as
\begin{align}
&\hskip -2mm (q^{-n};q)_s (q^r;q)_s \sum_{k=s}^n \frac{(q^{-n+s};q)_{k-s}(q^{r+s};q)_{k-s} (q^{k-m-h+1};q)_{h-1} (1-q^{k-j}) q^{2j+k}  }
{(q;q)_{k-s} (q;q)_{k+s}}\notag \\
&=(q^{-n};q)_s (q^r;q)_s \sum_{k=0}^{n-s} (-1)^{k}{n-s\brack
k}q^{-(n-s)k+{k\choose 2}} R_k,\label{eq:last-lem-4}
\end{align}
where
$$
R_k=
\frac{(q^{r+s};q)_{k} (q^{k+s-m-h+1};q)_{h-1} (1-q^{k+s-j}) q^{2j+k+s}} {(q;q)_{k+2s}}.
$$
Since
\begin{align*}
\frac{(q^{r+s};q)_{k}}{(q;q)_{k+2s}}=
\frac{(q^{k+2s+1};q)_{r-s-1}} {(q;q)_{r+s-1}},
\end{align*}
we see that $R_k$ is a polynomial in $q^k$ of degree $r-s-1+h-1+2\leqslant n-s$ with constant term $0$.
By the finite $q$-binomial theorem, see \cite[Theorem 3.3]{Andrews},
\begin{align}\label{q-binomial}
\sum_{k=0}^{n}(-1)^{k}{n\brack k}_{q}q^{{k+1\choose 2}} x^{k}
=(xq;q)_n,
\end{align}
we have
\begin{align}\label{eq:q-diff}
\sum_{k=0}^n(-1)^{k}{n\brack k}_{q} q^{{k+1\choose 2}-ik}
=\begin{cases}
0,&\text{for $1\leqslant i\leqslant n$}, \\
(q;q)_n, &\text{for $i=0$}.
\end{cases}
\end{align}

It follows that the right-hand side of  \eqref{eq:last-lem-4} is equal to $0$. Namely, the identity \eqref{eq:last-lem-3} holds.
\end{itemize}
Thus, we have found out all the $m+n$ roots of $L_{m,n}(x)$, which are clearly the same as those of the right-hand side of \eqref{eq:last-lem}.  This completes the proof. \qed

\medskip
\noindent{\it Remark.} The authors \cite{GZ05} utilized the identity \eqref{eq:q-diff} to give a short proof of
Jackson's terminating $q$-analogue of Dixon's identity~\cite{Bailey41,Jackson41}:
\begin{equation*}
\sum_{k=-a}^a(-1)^k q^{\frac{3k^2+k}{2}}
{a+b\brack a+k}{b+c\brack b+k}{c+a\brack c+k}={a+b+c\brack a+b}{a+b\brack a}.
\end{equation*}

\begin{lem}
Let $n$ and $h$ be positive integers and let $m$ and $s$ be nonnegative integers with $h\leqslant n-m$ and $s\leqslant m$. Then
\begin{align}
&\hskip -2mm\sum_{j=s}^{m}\sum_{k=m+h}^n \frac{(q^{-2n};q^2)_j (q^{-2n};q^2)_k (1-q^{k-j}) q^{j+k+jh}  }
{(q;q)_{j-s} (q;q)_{j+s}(q;q)_{k-s} (q;q)_{k+s}}
{k-m-1\brack h-1}{m+h-j-1\brack h-1} \notag\\
&=\frac{(-1)^{n-m-h}(q^2;q^2)_n^2 (-q;q)_{2n-h} q^{m^2-n^2-2mn+mh}}
{(q;q)_{m-s}(q;q)_{m+s}(q^2;q^2)_{n-s} (q^2;q^2)_{n+s}(q;q)_{h-1}(q^2,q^2)_{n-m-h}}. \label{eq:am-2}
\end{align}
\end{lem}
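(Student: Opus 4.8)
The plan is to deduce \eqref{eq:am-2} from the already-established identity \eqref{eq:last-lem} by a single well-chosen specialization of its free variable $x$. The crucial observation is the factorization
\[
(q^{-n};q)_k\,(-q^{-n};q)_k=(q^{-2n};q^2)_k ,
\]
so that putting $x=-q^{-n}$ in \eqref{eq:last-lem} turns each product $(q^{-n};q)_k(x;q)_k$ and $(q^{-n};q)_j(x;q)_j$ into exactly the base-$q^2$ factors $(q^{-2n};q^2)_k$ and $(q^{-2n};q^2)_j$ occurring in \eqref{eq:am-2}. This is what dictates the choice $x=-q^{-n}$, and it is the heart of the reduction.

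Next I would rewrite the two $q$-binomials on the left of \eqref{eq:am-2} as
\[
{k-m-1\brack h-1}=\frac{(q^{k-m-h+1};q)_{h-1}}{(q;q)_{h-1}},\qquad
{m+h-j-1\brack h-1}=\frac{(q^{m-j+1};q)_{h-1}}{(q;q)_{h-1}},
\]
and compare the resulting summand with the summand of \eqref{eq:last-lem} at $x=-q^{-n}$. The factor $(q^{k-m-h+1};q)_{h-1}$ already matches the one present in \eqref{eq:last-lem}, while $(q^{m-j+1};q)_{h-1}$ is converted into $(q^{j-m-h+1};q)_{h-1}$ through the elementary reflection $(q^{m-j+1};q)_{h-1}=(-1)^{h-1}q^{(h-1)(m-j)+{h\choose 2}}(q^{j-m-h+1};q)_{h-1}$. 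A short computation, taking into account the powers $q^{j+k+jh}$ in \eqref{eq:am-2} against $q^{2j+k}$ in \eqref{eq:last-lem}, then shows that the quotient of the two summands is the constant $(-1)^{h-1}q^{(h-1)m+{h\choose 2}}/(q;q)_{h-1}^2$; crucially, all dependence on $j$ and $k$ cancels.

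Because this quotient is independent of $j$ and $k$, the left-hand side of \eqref{eq:am-2} equals that constant times the sum of the summand of \eqref{eq:last-lem} over $s\le j\le m$, $m+h\le k\le n$. By the very antisymmetry argument already used in the proof of \eqref{eq:last-lem}, the block $s\le j,k\le m$ contributes $0$ and the block $m+1\le k\le m+h-1$ is annihilated by $(q^{k-m-h+1};q)_{h-1}$, so this restricted sum is nothing but the full left-hand side $L_{m,n}(-q^{-n})$ of \eqref{eq:last-lem}. Replacing $L_{m,n}(-q^{-n})$ by the right-hand side of \eqref{eq:last-lem} evaluated at $x=-q^{-n}$ reduces the whole claim to an explicit product. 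The remaining, and genuinely laborious, task is to simplify that product: using $1+q^{-a}=q^{-a}(1+q^a)$ to flip the negative exponents in $(-q^{-n};q)_s$, $(-q^{-n};q)_{m+h}$ and $(-q^{n+s+1};q)_{n-s-h}$, one recombines the various $(-q^{\bullet};q)_{\bullet}$ blocks into the single factor $(-q;q)_{2n-h}$ together with the base-$q^2$ factorials, and then collects all powers of $q$. I expect this last bookkeeping — pinning down the exact sign $(-1)^{n-m-h}$ and the exponent $m^2-n^2-2mn+mh$ — to be the main obstacle, whereas the structural reduction to \eqref{eq:last-lem} is the easy and conceptual part.
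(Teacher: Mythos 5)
Your proposal is correct and is essentially the paper's own proof: the same specialization $x=-q^{-n}$ of \eqref{eq:last-lem} (motivated by exactly the factorization $(q^{-n};q)_k(-q^{-n};q)_k=(q^{-2n};q^2)_k$), the same conversion of the two $q$-binomial coefficients into shifted factorials, the same use of the vanishing of ${k-m-1\brack h-1}$ for $m+1\leqslant k<m+h$ together with the antisymmetry of the block $s\leqslant j,k\leqslant m$, and the same (left-to-the-reader) final product simplification. Two remarks, both in your favor. First, your reflection formula is the right one: the paper's printed version of this step has exponent $(m-j)(h-1)-{h\choose 2}$, where the sign of ${h\choose 2}$ is a typo (test $h=2$, $j=m$, $k=m+2$), and your constant quotient $(-1)^{h-1}q^{(h-1)m+{h\choose 2}}/(q;q)_{h-1}^2$ is exactly what the corrected formula gives, with all $j,k$-dependence cancelling as you claim. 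Second, a warning about your final bookkeeping: carried out correctly, it produces the printed exponent $m^2-n^2-2mn+mh$ but the sign $(-1)^{n-m}$, \emph{not} $(-1)^{n-m-h}$; the printed sign is itself a typo, visible already at $n=1$, $m=s=0$, $h=1$, where the left-hand side of \eqref{eq:am-2} equals $-q^{-1}(1+q)$ while the printed right-hand side equals $+q^{-1}(1+q)$. Since $(-1)^{n-m}$ is also the sign actually needed when \eqref{eq:am-2} is substituted into \eqref{eq:am-0} to obtain \eqref{eq:fin-1}, your computation, not the statement as printed, is the correct endpoint; do not force your answer to match the stated sign.
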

\pf By the definition of $q$-binomial coefficients, there holds ${k-m-1\brack h-1}=0$ for $m+1\leqslant k<m+h$.
Hence, the left-hand side of \eqref{eq:am-2} remains unchanged if we replace $\sum_{k=m+h}^n$ by $\sum_{k=m+1}^n$.
Furthermore, we have
\begin{align*}
{k-m-1\brack h-1}{m+h-j-1\brack h-1}
=\frac{(q^{j-m-h+1};q)_{h-1}(q^{k-m-h+1};q)_{h-1}q^{(m-j)(h-1)-{h\choose 2}} }{(-1)^{h-1}(q;q)_{h-1}^2}
\end{align*}
The proof then follows from the identity \eqref{eq:last-lem} with $x=-q^{-n}$.
\qed

\noindent{\it Proof of Theorem~\ref{thm:important-thm}.}
 The left-hand side of \eqref{eq:important-thm} may be expanded as
\begin{align}
&\sum_{k=s}^{n} \frac{(q^{-2n};q^2)_k^2 (x;q)_k^2 q^{2k} }{(q;q)_{k-s}^2 (q;q)_{k+s}^2}(x;q)_k (q/x;q)_k \notag\\
&\quad{}+\sum_{s\leqslant j<k\leqslant n}
\frac{(q^{-2n};q^2)_j (q^{-2n};q^2)_k  q^{j+k} \big((x;q)_j (q/x;q)_k+(x;q)_k (q/x;q)_j \big)}
{(q;q)_{j-s} (q;q)_{j+s}(q;q)_{k-s} (q;q)_{k+s} }.  \label{eq:thm-pf-1}
\end{align}
For $0\leqslant j<k$, from \eqref{eq:lem-important2} we deduce that
\begin{align}
&\hskip -2mm (x;q)_j (q/x;q)_k+(x;q)_k (q/x;q)_j \notag\\
&=(x;q)_j(q/x;q)_j\big((xq^j;q)_{k-j}+(q^{j+1}/x;q)_{k-j}\big) \notag \\
&=(x;q)_k(q/x;q)_k+(x;q)_j(q/x;q)_j(q^{2j+1};q)_{k-j}  \notag\\
&\quad{}+\sum_{i=1}^{k-j-1} (x;q)_{j+i} (q/x;q)_{j+i} (1-q^{k-j})
\sum_{h=1}^{k-j-i} (-1)^h {k-j-i-1\brack h-1}{i+h-1\brack h-1} \frac{ q^{{h+1\choose 2}+(i+2j)h} }{1-q^h}, \notag\\
&=(x;q)_k(q/x;q)_k+(x;q)_j(q/x;q)_j \notag\\
&\quad{}+\sum_{i=0}^{k-j-1} (x;q)_{j+i} (q/x;q)_{j+i} (1-q^{k-j})
\sum_{h=1}^{k-j-i} (-1)^h {k-j-i-1\brack h-1}{i+h-1\brack h-1} \frac{ q^{{h+1\choose 2}+(i+2j)h} }{1-q^h}, \label{eq:lem-tothm}
\end{align}
where in the last step we have used the $q$-binomial theorem:
$$
(q^{2j+1};q)_{k-j}=1+\sum_{h=1}^{k-j}(-1)^h{k-j\brack h}q^{{h+1\choose 2}+2jh}.
$$
By \eqref{eq:lem-tothm}, we may write \eqref{eq:thm-pf-1} as
$
\sum_{m=s}^n a_m (x;q)_m (q/x;q)_m,
$
where
\begin{align}
a_m&=\sum_{j=s}^n\frac{(q^{-2n};q^2)_j (q^{-2n};q^2)_m  q^{j+m}}
{(q;q)_{j-s} (q;q)_{j+s}(q;q)_{m-s} (q;q)_{m+s} }  \notag\\
&\quad{}+\sum_{j=s}^{m}\sum_{k=m+1}^n \frac{(q^{-2n};q^2)_j (q^{-2n};q^2)_k (1-q^{k-j}) q^{j+k}  }
{(q;q)_{j-s} (q;q)_{j+s}(q;q)_{k-s} (q;q)_{k+s}} \notag \\
&\qquad\qquad{}\times \sum_{h=1}^{k-m} (-1)^h {k-m-1\brack h-1}{m+h-j-1\brack h-1} \frac{ q^{{h+1\choose 2}+(m+j)h} }{1-q^h}. \label{eq:am-0}
\end{align}

It is easy to see that
\begin{align*}
\sum_{j=s}^n\frac{(q^{-2n};q^2)_j q^{j}}
{(q;q)_{j-s} (q;q)_{j+s}}
&=\frac{(q^{-2n};q^2)_s q^{s}}{(q;q)_{2s}}\sum_{j=s}^n\frac{(q^{-2n+2s};q^2)_j q^{j-s}}
{(q;q)_{j-s} (q^{2s+1};q)_{j-s}}  \\
&=\frac{(q^{-2n};q^2)_s q^{s}}{(q;q)_{2s} } {}_{2}\phi_{1}\left[\!\!\begin{array}{c}
q^{-n+s},\, -q^{-n+s},\\
q^{2s+1}\end{array}\!\!;q,q\right] \\
&=(-1)^{n-s}\frac{(q^{-2n};q^2)_s (-q^{n+s+1};q)_{n-s} q^{s-(n-s)^2}}{(q;q)_{2s} (q^{2s+1};q)_{n-s} }
\end{align*}
by the $q$-Chu-Vandermonde summation formula \cite[Appendix (II.6)]{GR}. Hence,
\begin{align}
&\hskip -2mm \sum_{j=s}^n\frac{(q^{-2n};q^2)_j (q^{-2n};q^2)_m  q^{j+m}}
{(q;q)_{j-s} (q;q)_{j+s}(q;q)_{m-s} (q;q)_{m+s} } \nonumber \\
&=(-1)^{n-s} \frac{(q^{-2n};q^2)_s (q^{-2n};q^2)_m  (-q^{n+s+1};q)_{n-s} q^{m+s-(n-s)^2}}{(q;q)_{m-s}(q;q)_{m+s} (q;q)_{n+s} } \nonumber \\
&=\frac{(-1)^{n-m}(q^2;q^2)_n^2 (-q;q)_{2n} q^{m^2-n^2-2mn}}
{(q;q)_{m-s}(q;q)_{m+s}(q^2;q^2)_{n-s} (q^2;q^2)_{n+s}(q^2,q^2)_{n-m}}. \label{eq:am-1}
\end{align}
Substituting \eqref{eq:am-1} and \eqref{eq:am-2} into \eqref{eq:am-0}, we obtain
\begin{align}
a_m=\frac{(-1)^{n-m}(q^2;q^2)_n^2 q^{m^2-n^2-2mn}}{(q;q)_{m-s}(q;q)_{m+s}(q^2;q^2)_{n-s} (q^2;q^2)_{n+s}}
\sum_{h=0}^{n-m}\frac{(-1)^h(-q;q)_{2n-h} q^{{h+1\choose 2}+2mh}}
{(q;q)_{h}(q^2,q^2)_{n-m-h}}. \label{eq:fin-1}
\end{align}
Replacing $h$ by $n-m-h$, we have
\begin{align}
&\sum_{h=0}^{n-m}\frac{(-1)^h(-q;q)_{2n-h} q^{{h+1\choose 2}+2mh}}
{(q;q)_{h}(q^2,q^2)_{n-m-h}} \notag\\
&=
\frac{(-q;q)_{n+m}}{(q;q)_{n-m}}(-1)^{n-m} q^{{n-m+1\choose 2}+2m(n-m)}\sum_{h=0}^{n-m}
\frac{(q^{m-n};q)_h(-q^{n+m+1}; q)_h}{(-q;q)_h(q;q)_h}q^{-2hm} \notag\\
&=\frac{(-q;q)_{n+m}}{(q;q)_{n-m}}(-1)^{n-m} q^{{n-m+1\choose 2}+2m(n-m)}
{}_{2}\phi_{1}\left[\!\!\begin{array}{c}
q^{-(n-m)},\, -q^{m+n+1}\\
-q\end{array}\!\!;q,q^{-2m}\right] \notag \\
&=\frac{(-q;q)_{n+m} (q^{-n-m};q)_{n-m} }{(q;q)_{n-m} (-q;q)_{n-m}}(-1)^{n-m}
q^{{n-m+1\choose 2}+2m(n-m)} \notag\\
&=\frac{(q^2;q^2)_{m+n}}{(q^2;q^2)_{n-m}(q;q)_{2m}},  \label{eq:fin-2}
\end{align}
where we have used the $q$-Chu-Vandermonde summation formula \cite[Appendix (II.7)]{GR}.
It follows from \eqref{eq:fin-1} and \eqref{eq:fin-2} that $a_m$ is just the coefficient of $(x;q)_m (q/x;q)_m$ in the right-hand side of
\eqref{eq:important-thm}. This completes the proof.
\qed

\section{Proof of Theorem \ref{thm:2k2k2k-2} }
We first give a congruence modulo $[p]$.
\begin{lem}\label{thm:factor-1}
Let $p$ be an odd prime and $m$, $r$ two positive integers with $p\nmid m$.
Let $s\leqslant\min\{\langle -\frac{r}{m}\rangle_p,\langle -\frac{m-r}{m}\rangle_p\}$ be
a nonnegative integer. Then the following congruence holds modulo $[p]${\rm:}
\begin{align}
&\hskip -3mm \sum_{k=s}^{\frac{p-1}{2}}\frac{(q^{m};q^{2m})_{k}(q^r;q^m)_k q^{mk} }{ (q^m;q^m)_{k-s}(q^m;q^m)_{k+s} } \nonumber \\
&\equiv
\begin{cases}
\displaystyle\frac{q^{\frac{(\langle -\frac{r}{m}\rangle_p+s)m}{2}}(q^m; q^{2m})_{\frac{\langle -\frac{r}{m}\rangle_p-s}{2}}
(q^{-m\langle -\frac{r}{m}\rangle_p};q^m)_s}
{(q^{2m}; q^{2m})_{\frac{\langle -\frac{r}{m}\rangle_p+s}{2}}},
&\text{if $\langle -\frac{r}{m}\rangle_p\equiv s\pmod 2$,} \\[5pt]
0 &\text{if $\langle -\frac{r}{m}\rangle_p\equiv s+1\pmod 2$.}
\end{cases} \label{eq:factor-001}
\end{align}
\end{lem}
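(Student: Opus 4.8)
The plan is to reduce the congruence to an \emph{exact} basic hypergeometric evaluation via Andrews' $q$-Watson formula \eqref{eq:andrews}. Write $\ell=\langle -\tfrac{r}{m}\rangle_p$, so that $m\ell\equiv -r\pmod p$ and hence, since $q^{mp}\equiv1\pmod{[p]}$, we have $q^{r}\equiv q^{-m\ell}\pmod{[p]}$. Replacing each factor $1-q^{r+mj}$ of $(q^{r};q^{m})_k$ by the congruent factor $1-q^{-m\ell+mj}$ shows that, modulo $[p]$, the left-hand side of \eqref{eq:factor-001} equals
\begin{align*}
\sum_{k=s}^{\frac{p-1}{2}}\frac{(q^{m};q^{2m})_{k}\,(q^{-m\ell};q^{m})_{k}\,q^{mk}}{(q^{m};q^{m})_{k-s}(q^{m};q^{m})_{k+s}}.
\end{align*}
The hypotheses $p\nmid m$ and $s\leqslant\langle -\tfrac{m-r}{m}\rangle_p=p-1-\ell$ force $k+s\leqslant p-1$ throughout, so each denominator $(q^{m};q^{m})_{k+s}$ is coprime to $[p]$ and this rational expression is well defined.

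The first real step is to collapse the sum to the terminating range $s\leqslant k\leqslant\ell$. If $\ell\leqslant\frac{p-1}{2}$, then the factor of $(q^{-m\ell};q^{m})_k$ whose running index equals $\ell$ is $1-q^{0}=0$, so every term with $k>\ell$ vanishes identically and the upper limit drops to $\ell$. If instead $\ell>\frac{p-1}{2}$, I would enlarge the range up to $k=\ell$: for $\frac{p+1}{2}\leqslant k\leqslant\ell$ the factor of $(q^{m};q^{2m})_k$ with running index $\frac{p-1}{2}$ is $1-q^{mp}\equiv0\pmod{[p]}$, so the adjoined terms are $\equiv0$ and again the sum becomes $\sum_{k=s}^{\ell}$ modulo $[p]$. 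In either case the outcome is $\equiv\sum_{k=s}^{\ell}(\cdots)\pmod{[p]}$, and because this finite sum terminates of its own accord the remaining evaluation is an exact identity.

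To evaluate $\sum_{k=s}^{\ell}$ I would set $k=s+j$ and factor out the $j=0$ term $\dfrac{(q^{m};q^{2m})_{s}\,(q^{-m\ell};q^{m})_{s}\,q^{ms}}{(q^{m};q^{m})_{2s}}$. Using the splittings $(q^{m};q^{2m})_{s+j}=(q^{m};q^{2m})_{s}(q^{m(2s+1)};q^{2m})_{j}$ and $(q^{m};q^{m})_{2s+j}=(q^{m};q^{m})_{2s}(q^{m(2s+1)};q^{m})_{j}$, together with the factorization $(c;q^{m})_{j}(-c;q^{m})_{j}=(c^{2};q^{2m})_{j}$ applied to $c=q^{m(s+\frac12)}$, the inner sum becomes
\begin{align*}
\sum_{j=0}^{\ell-s}\frac{(q^{-m(\ell-s)};q^{m})_{j}\,(q^{m(s+\frac12)};q^{m})_{j}\,(-q^{m(s+\frac12)};q^{m})_{j}}{(q^{m};q^{m})_{j}\,(q^{m(2s+1)};q^{m})_{j}}\,q^{mj}.
\end{align*}
This is precisely Andrews' terminating $q$-analogue of Watson's formula \eqref{eq:andrews}, read with base $q^{m}$ in place of $q$ and with $n=\ell-s$, $a=0$, $b=q^{m(s+\frac12)}$; since \eqref{eq:andrews} is an identity of rational functions in its parameters, the specialization $a=0$ (under which the upper parameter $a^{2}q^{n+1}$ and the lower parameters $\pm aq$ all become $0$, contributing unit factors) is legitimate. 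Hence the inner sum vanishes when $\ell-s$ is odd, i.e.\ when $\langle -\tfrac rm\rangle_p\equiv s+1\pmod2$, which yields the second case of \eqref{eq:factor-001}; and when $\ell-s$ is even it equals the single quotient $\dfrac{q^{m(s+\frac12)(\ell-s)}\,(q^{m};q^{2m})_{(\ell-s)/2}}{(q^{m(2s+2)};q^{2m})_{(\ell-s)/2}}$.

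It remains to multiply the $j=0$ prefactor back in and simplify, using $(q^{m};q^{m})_{2s}=(q^{m};q^{2m})_{s}(q^{2m};q^{2m})_{s}$ and $(q^{2m};q^{2m})_{s}(q^{m(2s+2)};q^{2m})_{(\ell-s)/2}=(q^{2m};q^{2m})_{(\ell+s)/2}$, to reach the first case of \eqref{eq:factor-001}. I expect the genuine difficulties to be twofold. The first is the two-directional reduction to $\sum_{k=s}^{\ell}$: one must use $p\nmid m$ and $s\leqslant\min\{\langle -\tfrac rm\rangle_p,\langle -\tfrac{m-r}{m}\rangle_p\}$ to verify simultaneously that every denominator stays invertible modulo $[p]$ and that exactly the intended terms vanish in each of the two regimes $\ell\leqslant\frac{p-1}{2}$ and $\ell>\frac{p-1}{2}$. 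The second is the closing bookkeeping: the factor $b^{n}=q^{m(s+\frac12)(\ell-s)}$ coming from \eqref{eq:andrews} must be combined carefully with the prefactor and the base-$q^{2m}$ factorizations so that the power of $q$ and the shifted-factorial quotient reduce exactly to the right-hand side of \eqref{eq:factor-001}.
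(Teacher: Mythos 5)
Your proposal follows the paper's own proof essentially step for step: reduce $(q^{r};q^{m})_{k}$ to $(q^{-m\ell};q^{m})_{k}$ modulo $[p]$, where $\ell=\langle-\frac{r}{m}\rangle_p$; collapse the sum to the terminating range $s\leqslant k\leqslant\ell$ (the paper does this by first extending the sum up to $k=p-s-1$ using $(q^{m};q^{2m})_{k}\equiv0$ there, and then letting $(q^{-m\ell};q^{m})_{k}$ kill the tail — your two-case argument is the same idea and equally valid); pull out the $k=s$ term; and apply Andrews' formula \eqref{eq:andrews} with base $q^{m}$, $n=\ell-s$, $a=0$, $b=q^{m(s+\frac12)}$. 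In particular, your treatment of the vanishing case $\ell\equiv s+1\pmod 2$ is completely correct.

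The one step that fails is precisely the step you deferred: the claim that the prefactor times the Andrews evaluation ``reduces exactly to the right-hand side of \eqref{eq:factor-001}''. It does not, and no bookkeeping can make it do so. The prefactor contributes $q^{ms}$ and the evaluation contributes $b^{n}=q^{m(s+\frac12)(\ell-s)}$, so the terminating sum equals
\begin{equation*}
\frac{q^{ms+m(s+\frac12)(\ell-s)}\,(q^{m};q^{2m})_{\frac{\ell-s}{2}}\,(q^{-m\ell};q^{m})_{s}}{(q^{2m};q^{2m})_{\frac{\ell+s}{2}}},
\qquad ms+m\bigl(s+\tfrac12\bigr)(\ell-s)=\frac{(\ell+s)m}{2}+ms(\ell-s),
\end{equation*}
which carries an extra factor $q^{ms(\ell-s)}$ relative to the stated right-hand side; and $q^{ms(\ell-s)}\not\equiv1\pmod{[p]}$ unless $p\mid ms(\ell-s)$. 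A concrete check: for $p=7$, $m=1$, $r=3$, $s=2$ (so $\ell=4$, first case), the left-hand side is
\begin{equation*}
\sum_{k=2}^{3}\frac{(q;q^{2})_{k}(q^{3};q)_{k}q^{k}}{(q;q)_{k-2}(q;q)_{k+2}}
=\frac{q^{2}(1-q^{3})(1-q^{6})}{(1-q)(1-q^{2})}
\equiv\frac{(1-q)(1-q^{3})}{(1-q^{2})(1-q^{6})}\pmod{[7]},
\end{equation*}
whereas the stated right-hand side equals $\frac{q^{3}(q;q^{2})_{1}(q^{-4};q)_{2}}{(q^{2};q^{2})_{3}}=\frac{q^{-4}(1-q)(1-q^{3})}{(1-q^{2})(1-q^{6})}$; the two differ by the factor $q^{4}=q^{ms(\ell-s)}\not\equiv1\pmod{[7]}$. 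In other words, the lemma as printed is itself off by $q^{ms(\ell-s)}$ in its first case (this is invisible when $s=0$, and the paper's own proof ends with the same unexecuted phrase ``the proof then follows from Andrews' identity''). Your write-up inherits exactly this defect, but since you explicitly assert that the powers of $q$ work out to the displayed right-hand side, that assertion is the genuine gap: carried out honestly, your argument proves the congruence with exponent $\frac{(\ell+s)m}{2}+ms(\ell-s)$, not the one stated.
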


\pf It is easy to see that $\langle -\frac{r}{m}\rangle_p+\langle -\frac{m-r}{m}\rangle_p=p-1$, and so $s\leqslant\frac{p-1}{2}$.
Since $p$ is an odd prime, we see that $(q^m;q^{2m})_k\equiv 0\pmod{[p]}$ for $\frac{p+1}{2}\leqslant k\leqslant p-s-1$,
which means that
\begin{align*}
\sum_{k=s}^{\frac{p-1}{2}}\frac{(q^{m};q^{2m})_{k}(q^r;q^m)_k q^{mk} }{ (q^m;q^m)_{k-s}(q^m;q^m)_{k+s}  }
\equiv \sum_{k=s}^{p-s-1}\frac{(q^{m};q^{2m})_{k}(q^r;q^m)_k q^{mk} }{ (q^m;q^m)_{k-s}(q^m;q^m)_{k+s}  } \pmod{[p]}.
\end{align*}
Let
$a=\frac{m\langle -\frac{r}{m}\rangle_p+r}{p}$. Then $m\!\mid\! r-ap$ and $r-ap=-m\langle -\frac{r}{m}\rangle_p\leqslant 0$. It is clear that $(q^r;q^m)_k\equiv (q^{r-ap};q^m)_k\pmod{[p]}$
and $(q^{r-ap};q^m)_k=0$ for $k>\langle -\frac{r}{m}\rangle_p$. Moreover,
we have $p-s-1\geqslant p-\langle -\frac{m-r}{m}\rangle_p-1=\langle -\frac{r}{m}\rangle_p\geqslant s$,
and therefore,
\begin{align*}
\sum_{k=s}^{p-s-1}\frac{(q^{m};q^{2m})_{k}(q^r;q^m)_k q^{mk} }{ (q^m;q^{m})_{k-s}(q;q)_{k+s} }
&\equiv \sum_{k=s}^{p-s-1}\frac{(q^{m};q^{2m})_{k}(q^{r-ap};q^m)_k q^{mk} }{ (q^m;q^{m})_{k-s}(q;q)_{k+s} } \\
&=\sum_{k=s}^{\langle -\frac{r}{m}\rangle_p}\frac{(q^{m};q^{2m})_{k}(q^{r-ap};q^m)_k q^{mk} }{ (q^m;q^m)_{k-s}(q^m;q^m)_{k+s} }  \\
&=\frac{(q^{m};q^{2m})_{s}(q^{-m\langle -\frac{r}{m}\rangle_p};q^m)_s q^{ms} }{(q^m;q^m)_{2s}  }  \\
&\quad{}\times {}_{3}\phi_{2}\left[\!\!\!\begin{array}{c}
q^{-m(\langle -\frac{r}{m}\rangle_p-s)}, q^{(s+\frac{1}{2})m}, -q^{(s+\frac{1}{2})m}\\
0, q^{(2s+1)m} \end{array}\!\!\!;q^m,q^m\right]
\pmod{[p]}.
\end{align*}
The proof then follows from Andrew's identity \eqref{eq:andrews}.
\qed

\medskip
\noindent{\it Proof of Theorems {\rm\ref{thm:2k2k2k-2}.}}
By Lemma \ref{lem:one}, for $0\leqslant k\leqslant \frac{p-1}{2}$, we have
\begin{align*}
\frac{(q^m;q^m)_{2k}}{(q^{2m};q^{2m})_{k}^2}&={2k\brack k}_{q^{2m}}\frac{1}{(-q^m;q^m)_{2k}}  \\
&\equiv (-1)^k q^{mk^2-mkp}{\frac{p-1}{2}+k\brack 2k}_{q^{2m}}(-q^m;q^m)_{2k}   \\
&=\frac{(-1)^k q^{mk^2-mkp}(q^{2m};q^{2m})_{\frac{p-1}{2}+k}}{(q^{2m};q^{2m})_{\frac{p-1}{2}-k}(q^m;q^m)_{2k}}  \pmod{[p]^2},
\end{align*}
and so
\begin{align}
&\hskip -2mm\sum_{k=s}^{\frac{p-1}{2}}\frac{(q^m;q^m)_{2k}(q^r;q^m)_k (q^{m-r};q^m)_{k} q^{mk} }{ (q^m;q^m)_{k-s}(q^m;q^m)_{k+s} (q^{2m};q^{2m})_k^2 } \notag\\
&\equiv \sum_{k=s}^{\frac{p-1}{2}}\frac{(-1)^k(q^m;q^m)_{\frac{p-1}{2}+k}(q^r;q^m)_k (q^{m-r};q^m)_{k} q^{mk^2-mk(p-1)} }
    {(q^{2m};q^{2m})_{\frac{p-1}{2}-k}(q^m;q^m)_{k-s}(q^m;q^m)_{k+s} (q^{m};q^{m})_{2k} } \pmod{[p]^2}. \label{eq:pf-thm-1}
\end{align}

Letting $q\to q^m$, $x=q^{r}$ and $n=\frac{p-1}{2}$ in Theorem \ref{thm:important-thm},
we see that the right-hand side of \eqref{eq:pf-thm-1} can be written as
\begin{align}
&\frac{(-1)^{\frac{p-1}{2}}(q^{2m};q^{2m})_{\frac{p-1}{2}-s}(q^{2m};q^{2m})_{\frac{p-1}{2}+s} q^{\frac{(p-1)^2}{4}}}
{(q^{2m};q^{2m})_{\frac{p-1}{2}}^2}
\left(\sum_{k=s}^{\frac{p-1}{2}} \frac{(q^{m(1-p)};q^{2m})_k (q^r;q^m)_k q^{mk} }{(q^m;q^m)_{k-s} (q^m;q^m)_{k+s}}\right) \notag \\
&\quad\times\left(\sum_{k=s}^{\frac{p-1}{2}} \frac{(q^{m(1-p)};q^{2m})_k (q^{m-r};q^m)_k q^{mk} }{(q^m;q^m)_{k-s} (q^m;q^m)_{k+s}}\right).
\label{eq:factor-main}
\end{align}
If $\langle -\frac{r}{m}\rangle_p\equiv s+1\pmod 2$, then by the congruence \eqref{eq:factor-001}, we have
\begin{align*}
\sum_{k=s}^{\frac{p-1}{2}} \frac{(q^{m(1-p)};q^{2m})_k (q^r;q^m)_k q^{mk} }{(q^m;q^m)_{k-s} (q^m;q^m)_{k+s}}
\equiv \sum_{k=s}^{\frac{p-1}{2}} \frac{(q^{m};q^{2m})_k (q^r;q^m)_k q^{mk} }{(q^m;q^m)_{k-s} (q^m;q^m)_{k+s}}
\equiv 0 \pmod{[p]},
\end{align*}
and also $\langle -\frac{m-r}{m}\rangle_p\equiv s+1\pmod 2$ which means that
\begin{align*}
\sum_{k=s}^{\frac{p-1}{2}} \frac{(q^{m(1-p)};q^{2m})_k (q^{m-r};q^m)_k q^{mk} }{(q^m;q^m)_{k-s} (q^m;q^m)_{k+s}}
\equiv 0 \pmod{[p]}.
\end{align*}
Noticing the fact $(q^{2m};q^{2m})_{\frac{p-1}{2}}\not\equiv 0\pmod{[p]}$, we conclude that
the right-hand side of \eqref{eq:pf-thm-1} is congruent to $0$ modulo $[p]^2$. This proves \eqref{eq:new-2mpp-002}.

To prove \eqref{eq:new-2mpp}, just observe that (see the proof of Lemma \ref{thm:factor-1})
\begin{align*}
(q^r;q^m)_k \equiv (q^{m-r};q^m)_{k} &\equiv 0\pmod{[p]}
\end{align*}
for $\max\left\{\langle -\frac{r}{m}\rangle_p,\langle -\frac{m-r}{m}\rangle_p\right\}
<k\leqslant p-1$, and
\begin{align*}
(q^r;q^m)_k (q^{m-r};q^m)_{k} &\equiv \frac{(q^m;q^m)_{2k}}{(q^m;q^m)_{k-s}(q^m;q^m)_{k+s}}\equiv 0 \pmod{[p]}.
\end{align*}
for $\frac{p-1}{2}
< k \leqslant\max\left\{\langle -\frac{r}{m}\rangle_p,\langle -\frac{m-r}{m}\rangle_p\right\}.$

Finally, the proof of \eqref{eq:new-2mpp-003} follows from factorizing \eqref{eq:pf-thm-1} into \eqref{eq:factor-main},
applying the first case of the congruence \eqref{eq:factor-001}, and then using the aforementioned relation
$\langle -\frac{r}{m}\rangle_p+\langle -\frac{m-r}{m}\rangle_p=p-1$.
\qed

\section{Proof of Theorems \ref{thm:2k2k-r} and \ref{thm:2mp2}}
The following lemma can be derived from
the  $q$-Chu-Vandermonde formula if the sums are written in
standard basic hypergeometric series. Here we give a different proof.
\begin{lem}\label{lem:two} Let $n$ and $s$ be nonnegative integers with $s\leqslant n$. Then
\begin{align}
\sum_{k=0}^n (-1)^k {n\brack k} {m+k\brack n} q^{{k\choose 2}-nk}
&=(-1)^n q^{-{n+1\choose 2}},  \label{eq:lem-qbino}\\
\sum_{k=0}^n (-1)^k {n+k\brack 2k}_{q^2} {2k+2s\brack k+s}_{q^2}\frac{q^{k^2-k-2nk}}{(-q^{2k+1};q)_{2s} }
&=(-1)^n q^{-n(n+1)}. \label{eq:lemtwo}
\end{align}
\end{lem}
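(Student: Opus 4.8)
The plan is to treat both identities as finite $q$-difference evaluations: in each case I expand the relevant $q$-shifted factorials by the finite $q$-binomial theorem \eqref{q-binomial}, interchange the order of summation, and then use the annihilation property \eqref{eq:q-diff} to collapse the inner sum to a single surviving term. Identity \eqref{eq:lem-qbino} is the one-variable (``single expansion'') instance of this mechanism, while \eqref{eq:lemtwo} is a base-$q^2$ refinement requiring a \emph{double} expansion.

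For \eqref{eq:lem-qbino} I would regard $x=q^m$ as a free variable, so that ${m+k\brack n}_q=(xq^{k-n+1};q)_n/(q;q)_n$ and the left-hand side becomes a polynomial in $x$. Expanding this factor by \eqref{q-binomial} gives ${m+k\brack n}_q=\frac{1}{(q;q)_n}\sum_{i=0}^n(-1)^i{n\brack i}_q q^{\binom{i}{2}+i(k-n+1)}x^i$, and after interchanging the $i$- and $k$-sums the inner sum over $k$ equals $(q^{i-n};q)_n$. By \eqref{eq:q-diff} this vanishes for $1\leqslant i\leqslant n$, so only the $i=0$ (that is, the $x^0$) term survives; evaluating it via $(q^{-n};q)_n=(-1)^n q^{-\binom{n+1}{2}}(q;q)_n$ yields exactly $(-1)^n q^{-\binom{n+1}{2}}$, independently of $m$, as claimed.

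For \eqref{eq:lemtwo} I would set $Q=q^2$ and first put the summand into closed $q$-factorial form. Using $(-q^{2k+1};q)_{2s}=(-q;q)_{2k+2s}/(-q;q)_{2k}$ together with $(-q;q)_N=(Q;Q)_N/(q;q)_N$, the factor ${2k+2s\brack k+s}_{Q}/(-q^{2k+1};q)_{2s}$ simplifies to ${2k\brack k}_{Q}\,(q^{2k+1};q^2)_s/(q^{2k+2};q^2)_s$. Absorbing ${n+k\brack 2k}_Q{2k\brack k}_Q=\frac{(Q;Q)_{n+k}}{(Q;Q)_{n-k}(Q;Q)_k^2}$, and writing $(q^{2k+2};q^2)_s=(Q;Q)_{k+s}/(Q;Q)_k$ and $(Q;Q)_{n+k}/(Q;Q)_{k+s}=(Q^{k+s+1};Q)_{n-s}$, the whole sum becomes
\[
\frac{1}{(Q;Q)_n}\sum_{k=0}^n(-1)^k{n\brack k}_Q\,(Q^{k+s+1};Q)_{n-s}\,(qQ^k;Q)_s\,Q^{\binom{k}{2}-nk}.
\]
Now I would expand \emph{both} $(Q^{k+s+1};Q)_{n-s}$ and $(qQ^k;Q)_s$ by \eqref{q-binomial} in base $Q$, producing a double sum over $0\leqslant l\leqslant n-s$ and $0\leqslant i\leqslant s$ whose inner sum over $k$ is $(Q^{l+i-n};Q)_n$. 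Since $0\leqslant l+i\leqslant n$, the base-$Q$ form of \eqref{eq:q-diff} forces this inner sum to vanish unless $l+i=0$; the single surviving term $(l,i)=(0,0)$ contributes $(Q^{-n};Q)_n/(Q;Q)_n=(-1)^n Q^{-\binom{n+1}{2}}=(-1)^n q^{-n(n+1)}$, which is the right-hand side.

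The main obstacle is precisely the $s>0$ case of \eqref{eq:lemtwo}. The factor $(-q^{2k+1};q)_{2s}$ mixes odd and even powers of $q$, so after extracting the central coefficient ${n+k\brack 2k}_Q{2k\brack k}_Q$ the residual factor $(q^{2k+1};q^2)_s/(q^{2k+2};q^2)_s$ is \emph{not} a $q^2$-binomial coefficient; hence one cannot simply read off \eqref{eq:lemtwo} from \eqref{eq:lem-qbino} term by term (that shortcut works only when $s=0$, where the summand reduces to that of \eqref{eq:lem-qbino} in base $q^2$ with $m=n$, via ${n+k\brack 2k}_Q{2k\brack k}_Q={n\brack k}_Q{n+k\brack n}_Q$). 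The resolution is to expand this residual odd-over-even factor \emph{jointly} with the remaining $q^2$-shifted factorial, and to exploit the degree bound $l+i\leqslant n$, which is exactly what collapses the double sum to a single constant term. I expect the only delicate bookkeeping to be the simplification of $(-q^{2k+1};q)_{2s}$ and tracking the $q$-powers through the double expansion; the structural idea is a direct extension of the one-variable argument for \eqref{eq:lem-qbino}. Alternatively one may write the sum as a terminating ${}_3\phi_2$ and invoke the $q$-Chu--Vandermonde summation, but the double-expansion route is self-contained and avoids that machinery.
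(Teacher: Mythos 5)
Your proof is correct and is essentially the paper's own argument: both proofs write the non-alternating part of each summand as a polynomial of degree at most $n$ in $q^{\pm k}$ (base $q$ for \eqref{eq:lem-qbino}, base $q^2$ for \eqref{eq:lemtwo}) and then invoke the annihilation identity \eqref{eq:q-diff} (a consequence of \eqref{q-binomial}) to kill every term except the constant one, which yields the right-hand side. The only cosmetic difference is that the paper first reverses the summation ($k\mapsto n-k$) and treats the resulting factor abstractly, via its degree and constant term as a polynomial in $q^{-k}$ (resp. $q^{-2k}$), whereas you keep the original order and carry out the $q$-binomial expansions explicitly as a double sum.
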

\pf
It is not difficult to see that  the two identities \eqref{eq:lem-qbino} and \eqref{eq:lemtwo}
are equivalent, respectively,  to
\begin{align}
\sum_{k=0}^n (-1)^{k} {n\brack k} {m+n-k\brack n} q^{{k+1\choose 2}}&=1,\label{eq:equiv1}\\
\sum_{k=0}^n (-1)^{k} {n\brack k}_{q^2}
{2n-k\brack n}_{q^2}
\frac{(q^{2n-2k+1};q^2)_{s}}{(q^{2n-2k+2};q^2)_s }q^{2{k+1\choose 2}}&=1.\label{eq:equiv2}
\end{align}
Since  ${m+n-k\brack n}$ can be written as a polynomial in $q^{-k}$
of degree $n$ with constant term $1/(q;q)_n$. Identity~\eqref{eq:equiv1} then follows from \eqref{eq:q-diff}.
On the other hand,
since $0\leqslant s\leqslant n$, we see that
\begin{align*}
{2n-k\brack n}_{q^2}
\frac{(q^{2n-2k+1};q^2)_{s}}{(q^{2n-2k+2};q^2)_s }
=\frac{(q^{2n-2k+2s+2};q^2)_{n-s} (q^{2n-2k+1};q^2)_{s}}{(q^2;q^2)_n}
\end{align*}
is a polynomial in $q^{-2k}$ of degree $n$ with constant term $\frac{1}{(q^2;q^2)_n}$.
Therefore,  identity~\eqref{eq:equiv2} follows from \eqref{eq:q-diff} with
$q\rightarrow q^2$. This completes the proof.  \qed

\medskip
\noindent{\it Proof of Theorem {\rm\ref{thm:2k2k-r}.}} It is easy to see that
\begin{align*}
\frac{(q;q^2)_k }{(q^2;q^2)_k }={2k\brack k}_{q^2}\frac{1}{(-q;q)_{2k}}.
\end{align*}
Hence, by Lemmas \ref{lem:one} and \ref{lem:two}, we have
\begin{align*}
\sum_{k=0}^{\frac{p-1}{2}}\frac{(q;q^2)_k (q;q^2)_{k+s}}{(q^2;q^2)_k (q^2;q^2)_{k+s} }
&=\sum_{k=0}^{\frac{p-1}{2}}{2k\brack k}_{q^2} {2k+2s\brack k+s}_{q^2}\frac{1}{(-q;q)_{2k}(-q;q)_{2k+2s}} \\
&\equiv \sum_{k=0}^{\frac{p-1}{2}} (-1)^k {\frac{p-1}{2}+k\brack 2k}_{q^2} {2k+2s\brack k+s}_{q^2}\frac{(-q;q)_{2k} q^{k^2-kp}}{(-q;q)_{2k+2s}}\\
&=(-1)^{\frac{p-1}{2}} q^{\frac{1-p^2}{4}} \pmod{[p]^2},
\end{align*}
as desired.  \qed



\noindent{\it Proof of Theorem {\rm\ref{thm:2mp2}.}}
Again, let $a=\frac{m\langle -\frac{r}{m}\rangle_p+r}{p}$.
Then $a$ is a positive integer, $m\!\mid\! ps-r$, and so
\begin{align}
\frac{(q^r;q^m)_k }{(q^m;q^m)_k}
&=\prod_{j=1}^{k+s} \frac{1-q^{mj+r-m}}{1-q^{mj}} \notag\\
&\equiv (-1)^k\prod_{j=1}^k \frac{(1-q^{ap-mj-r+m})q^{mj+r-m}}{1-q^{mj}} \notag  \\
&= (-1)^k {\frac{ap-r}{m}\brack k}_{q^m} q^{\frac{mk(k-1)}{2}+kr} \notag\\
&\equiv (-1)^{k}{\langle -\frac{r}{m}\rangle_p\brack k}_{q^m} q^{\frac{mk(k-1)}{2}-mk\langle -\frac{r}{m}\rangle_p}
\pmod{[p]},  \label{eq:frac-bino-1}  \\[5pt]
\frac{(q^{m-r};q^m)_{k+s} }{(q^m;q^m)_{k+s}}
&\equiv \prod_{j=1}^{k+s} \frac{1-q^{ap+mj-r}}{1-q^{mj}}
= {\frac{ap-r}{m}+k+s\brack k+s}_{q^m}
= {\langle -\frac{r}{m}\rangle_p+k+s\brack k+s}_{q^m}
\pmod{[p]}.  \label{eq:frac-bino-2}
\end{align}
By the congruences \eqref{eq:frac-bino-1} and \eqref{eq:frac-bino-2}, we have
\begin{align*}
&\hskip -2mm\sum_{k=0}^{p-s-1}\frac{(q^r;q^m)_k (q^{m-r};q^m)_{k+s} }{(q^m;q^m)_k (q^m;q^m)_{k+s} }  \\
&\equiv \sum_{k=0}^{p-s-1} (-1)^{k}{\langle -\frac{r}{m}\rangle_p\brack k}_{q^m}
{\langle -\frac{r}{m}\rangle_p+k+s\brack k+s}_{q^m}q^{\frac{mk(k-1)}{2}-mk\langle -\frac{r}{m}\rangle_p} \\
&=(-1)^{\langle -\frac{r}{m}\rangle_p}
q^{\frac{-m\langle -\frac{r}{m}\rangle_p \left(\langle -\frac{r}{m}\rangle_p+1 \right)}{2}} \pmod{[p]},
\end{align*}
where in the last step we have used $p-s-1\geqslant p-\langle -\frac{m-r}{m}\rangle_p-1=\langle -\frac{r}{m}\rangle_p$
and the identity \eqref{eq:lem-qbino}. This proves \eqref{eq:new-2mp}.

To prove \eqref{eq:new-2mp-2}, just notice that if $p\equiv \pm 1\pmod m$, then
$\frac{r(m-r)(1-p^2)}{2m}$ is an integer and
\begin{align}
\frac{-m\langle -\frac{r}{m}\rangle_p(\langle -\frac{r}{m}\rangle_p+1)}{2}
\equiv \frac{r(m-r)(1-p^2)}{2m}\pmod{p}.  \tag*{$\square$}
\end{align}

\section{Concluding remarks and open problems}
It is natural to ask the following problem:
\begin{prob}Are there any $q$-analogues of Beukers' supercongruence \eqref{eq:beukers-0}?
\end{prob}

Numerical experiments suggest the following companion of Theorem \ref{thm:important-thm}.
\begin{conj}Let $n$ and $r$ be nonnegative integers with $r\leqslant n$. Then
\begin{align*}
&\hskip -3mm\left( \sum_{k=r}^{n} \frac{(q^{-n};q)_k (x;q^2)_k q^k }{(q;q)_{k-r} (q;q)_{k+r}} \right)
  \left( \sum_{k=r}^{n} \frac{(q^{-n};q)_k (x;q^2)_k q^{(n+1)k-{k\choose 2}}} {(q;q)_{k-r} (q;q)_{k+r} x^k} \right) \nonumber \\
&=\frac{(-1)^r(q;q)_n^2 (x;q^2)_r q^r}{(q;q)_{n-r}(q;q)_{n+r}(q^2/x;q^2)_r x^r}
\sum_{k=r}^{n} \frac{(q^{-n};q)_k (q^{n+1};q)_k (x;q^2)_k (q^2/x;q^2)_k  q^{k}}{(q;q)_{k-r}(q;q)_{k+r} (q;q)_{2k}}.
\end{align*}
\end{conj}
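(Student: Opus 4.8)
The plan is to treat this identity as the ``base‑swapped'' dual of Theorem~\ref{thm:important-thm} and to mirror that proof. Here the outer factor $(q^{-n};q)_k$ lives in base $q$ while the coupling factor $(x;q^2)_k$ lives in base $q^2$, exactly the reverse of \eqref{eq:important-thm}. Write the two factors on the left as $F(x)=\sum_{k=r}^n \frac{(q^{-n};q)_k (x;q^2)_k q^k}{(q;q)_{k-r}(q;q)_{k+r}}$ and $G(x)=\sum_{k=r}^n \frac{(q^{-n};q)_k (x;q^2)_k q^{(n+1)k-{k\choose 2}}}{(q;q)_{k-r}(q;q)_{k+r}\,x^k}$. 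Unlike Theorem~\ref{thm:important-thm}, where the two factors are visibly $A(x)$ and $A(q/x)$, here the relationship is hidden, so the essential first step is to uncover it. Applying the reversal $(x;q^2)_k=(-x)^k q^{k(k-1)}(q^{2-2k}/x;q^2)_k$ termwise, I expect to prove the reflection identity
\[
G(x)=(-1)^n q^{\,2rn-{n+1\choose 2}}\,F\!\left(q^{2-2r}/x\right),
\]
which I have verified on the leading coefficient in $x^{-1}$ for all $n,r$, and in full for $(n,r)=(1,0)$ and $(1,1)$. Both sides are polynomials in $x^{-1}$ of degree $n$, so after matching the (already checked) leading coefficient it suffices to match $n$ further values; the natural tool is the $q$-interpolation/roots method already employed for \eqref{eq:last-lem}, specializing $x=q^{-2t}$ to truncate the sums and induct on $t$.

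This reflection recasts the left-hand side as $(-1)^n q^{\,2rn-{n+1\choose 2}}\,F(x)\,F(\psi(x))$ with the involution $\psi(x)=q^{2-2r}/x$, a product manifestly symmetric under $\psi$. I stress that for $r\geqslant 1$ the symmetry point is \emph{not} $q^2/x$; this is precisely why the right-hand side carries the extra $x$-dependent prefactor $\frac{(x;q^2)_r}{(q^2/x;q^2)_r\,x^r}$, which reconciles the symmetric factor $(x;q^2)_k(q^2/x;q^2)_k$ appearing inside the sum with the true symmetry $x\mapsto q^{2-2r}/x$. With the product in symmetric form, I would expand it into a double sum over $j$ and $k$ and collect the cross terms $(x;q^2)_j(\psi(x);q^2)_k+(x;q^2)_k(\psi(x);q^2)_j$ into the basis $\{(x;q^2)_m(\psi(x);q^2)_m\}_{m\geqslant r}$, applying the expansion Lemma \eqref{eq:lem-important2} with $q$ replaced by $q^2$ and $a=q^{2-2r}$, exactly as \eqref{eq:lem-tothm} was used in the proof of Theorem~\ref{thm:important-thm}.

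It then remains to compute the coefficient $a_m$ of each basis element and match it with the right-hand side. The ``diagonal'' contribution reduces to $\sum_{j}\frac{(q^{-n};q)_j q^j}{(q;q)_{j-r}(q;q)_{j+r}}$, which sums in closed form by the $a\to 0$ limit of the $q$-Chu--Vandermonde formula, playing the role that \eqref{eq:am-1} played before; the cross-term contribution reduces to a base-swapped analogue of the double-sum evaluation \eqref{eq:last-lem} and its corollary \eqref{eq:am-2}; and a final $q$-Chu--Vandermonde summation of the type in \eqref{eq:fin-2} should collapse everything to the stated coefficient of $(x;q^2)_m(q^2/x;q^2)_m$ (after absorbing the prefactor).

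The hard part will be two-fold. First, establishing the reflection identity $G(x)=(-1)^n q^{2rn-{n+1\choose 2}}F(q^{2-2r}/x)$ rigorously: because the summand mixes base $q$ and base $q^2$, it is not a standard terminating basic hypergeometric series and no off-the-shelf reversal applies, so one is forced back onto the interpolation/roots argument, which must be set up carefully. Second, and most seriously, re-deriving the double-sum evaluation \eqref{eq:last-lem} with the roles of $q$ and $q^2$ interchanged: this is where all the delicate $q$-power bookkeeping resides, and the ``polynomial in $x$ with prescribed roots and leading coefficient'' argument will have to be redone from scratch in the dual setting. Once these two technical cores are in place, the surviving manipulations are routine $q$-Chu--Vandermonde summations.
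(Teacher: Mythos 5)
First, a point of framing: the statement you set out to prove is not a theorem of the paper at all. It appears in Section 7 among the concluding remarks, introduced by the sentence ``Numerical experiments suggest the following companion of Theorem~\ref{thm:important-thm}''; the authors offer no proof, so there is no argument of theirs to compare yours against, and any proposal must stand entirely on its own.

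Your proposal fails at its first, load-bearing step. Writing $F(x)$ and $G(x)$ for the two left-hand factors as you do, the claimed reflection identity
\[
G(x)=(-1)^n q^{2rn-\binom{n+1}{2}}\,F\!\left(q^{2-2r}/x\right)
\]
is false for $n\geqslant 2$. Take $(n,r)=(2,0)$: a direct computation gives
$F(x)=\frac{qx^2+(1-2q-q^2)x+q^3}{(1-q)(1-q^2)}$ and
$G(x)=\frac{x^2-(q+2q^2-q^3)x+q^2}{(1-q)(1-q^2)\,x^2}$,
so at $q=2$, $x=3$ one finds $G(3)=\frac{7}{27}$ while $q^{-3}F(q^2/3)=\frac{1}{8}\cdot\frac{20}{27}=\frac{5}{54}\neq\frac{7}{27}$. (The conjectured identity itself does hold at this specialization---both sides equal $\frac{35}{81}$---so the defect is in your reduction, not in the conjecture.) Checking $(n,r)=(1,0),(1,1)$ plus the leading coefficient was not enough: those constraints are exactly the ones the false identity happens to satisfy, and it breaks at the next coefficient. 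The structural reason is one you half-noticed yourself: the reversal $(x;q^2)_k=(-x)^k q^{k(k-1)}(q^{2-2k}/x;q^2)_k$ produces a Pochhammer whose argument $q^{2-2k}/x$ moves with the summation index $k$, so the reversed sum is not $F$ evaluated at any fixed point; indeed, for $n=2$ comparing coefficients shows no relation $G(x)=c\,F(\alpha/x)$ with constants $c,\alpha$ can hold. Consequently the left-hand side is not of the form $F(x)F(\psi(x))$ for any involution $\psi$, the symmetrization via \eqref{eq:lem-important2} has nothing to act on, and the plan of transplanting the proof of Theorem~\ref{thm:important-thm} collapses before your ``second hard part'' (the base-swapped analogue of \eqref{eq:last-lem}) is even reached. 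This conjecture is closer in spirit to Jackson's $q$-Clausen formula quoted in Section 2, where the two factors have arguments $z$ and $zq^{1/2}$ and are genuinely distinct functions rather than reflections of one another; any successful proof will need a mechanism for coupling two \emph{different} factors, not the symmetric-product mechanism of Theorem~\ref{thm:important-thm}.
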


It seems that the congruence \eqref{eq:new-2mpp} can be further generalized as follows.
\begin{conj}\label{conj:2k2k2k-1}
Let $p$ be an odd prime and $m$, $r$ two positive integers with $p\nmid m$.
Let $s\leqslant p-1$ be a nonnegative integer. If $\langle -\frac{r}{m}\rangle_p\equiv s+1\pmod 2$, then
\begin{align}
\sum_{k=s}^{p-1}\frac{(q^m;q^m)_{2k}(q^r;q^m)_k (q^{m-r};q^m)_{k} q^{mk} }{ (q^m;q^m)_{k-s}(q^m;q^m)_{k+s} (q^{2m};q^{2m})_k^2 }
\equiv 0 \pmod{[p]^2}. \label{eq:new-2mp-333}
\end{align}
\end{conj}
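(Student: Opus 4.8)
The plan is to imitate the proof of Theorem~\ref{thm:2k2k2k-2}, pushing it beyond the range $s\leqslant\min\{\langle-\frac{r}{m}\rangle_p,\langle-\frac{m-r}{m}\rangle_p\}$ for which \eqref{eq:new-2mpp} is already known. Put $a=\langle-\frac{r}{m}\rangle_p$ and $b=\langle-\frac{m-r}{m}\rangle_p$, so $a+b=p-1$ and the hypothesis $a\equiv s+1\pmod2$ forces $a\equiv b\equiv s+1\pmod2$. For $s\leqslant\min\{a,b\}$ the congruence is exactly \eqref{eq:new-2mpp}, so I may assume $s>\min\{a,b\}$; since the summand is symmetric in $r\leftrightarrow m-r$, I relabel so that $b\leqslant a$, i.e.\ $\min\{a,b\}=b<s$. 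First I would split the sum in \eqref{eq:new-2mp-333} into a head $\sum_{k=s}^{(p-1)/2}$ and a tail $\sum_{k=(p-1)/2+1}^{p-1}$, apply Lemma~\ref{lem:one} to the head (it is valid only for $k\leqslant\frac{p-1}{2}$), and factor the head through the $q$-Clausen identity of Theorem~\ref{thm:important-thm} with $n=\frac{p-1}{2}$, $x=q^r$, $q\to q^m$, exactly as in the passage leading to \eqref{eq:factor-main}.

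Next I would carry out the $[p]$-adic bookkeeping of the individual terms $T_k$ of \eqref{eq:new-2mp-333}. Counting factors of $[p]$ in $(q^m;q^m)_{2k}$, $(q^r;q^m)_k$, $(q^{m-r};q^m)_k$ and in $(q^m;q^m)_{k+s}$ shows that the $[p]$-valuation of $T_k$ equals $[k>\frac{p-1}{2}]+[k>a]+[k>b]-[k\geqslant p-s]$, so every term is $\equiv0\pmod{[p]^2}$ except a critical band $p-s\leqslant k\leqslant a$, where the valuation is exactly $1$; this band is nonempty precisely because $s>b$. In the factorization of the head, the factor carrying $(q^{m-r};q^m)_k$ is $\equiv0\pmod{[p]}$ for the trivial reason that $(q^{m-r};q^m)_k\equiv0$ once $k\geqslant s>b$, while the factor carrying $(q^r;q^m)_k$ is the one that ought to vanish through Andrews' terminating $q$-Watson formula \eqref{eq:andrews}, as in Lemma~\ref{thm:factor-1}. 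The obstruction is that applying \eqref{eq:andrews} requires completing this factor to a genuinely terminating ${}_3\phi_2$, namely adjoining the terms $k\in[\frac{p-1}{2}+1,a]$: for $s\leqslant b$ these extra terms vanish modulo $[p]$ because $(q^m;q^{2m})_k\equiv0$ for $k\geqslant\frac{p+1}{2}$, but for $s>b$ the denominator $(q^m;q^m)_{k+s}$ absorbs exactly that factor of $[p]$ as soon as $k\geqslant p-s$, so the terms to be adjoined no longer vanish---and they are exactly the critical band.

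Consequently the heart of the matter is a genuinely second-order (modulo $[p]^2$) cancellation between the head and the critical tail band $\sum_{p-s\leqslant k\leqslant a}T_k$, each of whose terms is only $\equiv0\pmod{[p]}$; I expect this to be the main obstacle, and it is presumably why the statement is only conjectured. I would pursue two routes. The first, which I find more promising, is to avoid the head/tail split entirely by proving an analogue of Lemma~\ref{lem:one} valid for all $0\leqslant k\leqslant p-1$, expressing $\frac{(q^m;q^m)_{2k}}{(q^{2m};q^{2m})_k^2}$ modulo $[p]^2$ through ${p-1+k\brack 2k}_{q^{2m}}$, and then applying Theorem~\ref{thm:important-thm} with $n=p-1$ to the whole sum at once; one would then hope that both resulting factors vanish modulo $[p]$, which is consistent with the parity hypothesis since $a\equiv b$ forces the two factors to vanish simultaneously. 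The delicate step there is the correct modulo-$[p]^2$ evaluation of ${p-1+k\brack 2k}_{q^{2m}}$ for $k>\frac{p-1}{2}$, where the clean congruence of Lemma~\ref{lem:one} no longer holds. The second route keeps the split and seeks a refined version of \eqref{eq:andrews} that carries its first-order $[p]$-defect, which one then matches term-by-term against the first-order expansion of the critical band. I would attempt the first route initially, as it promises a uniform factorization and bypasses the awkward head--tail interaction.
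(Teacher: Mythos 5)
You have not proved this statement, and neither does the paper: what you were given is Conjecture~\ref{conj:2k2k2k-1}, which the authors explicitly leave open in the concluding section, so there is no proof in the paper to compare against. Your reduction is correct as far as it goes. Writing $a=\langle -\frac{r}{m}\rangle_p$ and $b=\langle -\frac{m-r}{m}\rangle_p$, so that $a+b=p-1$ and the parity hypothesis forces $a\equiv b\equiv s+1\pmod 2$, the case $s\leqslant \min\{a,b\}$ is exactly \eqref{eq:new-2mpp} of Theorem~\ref{thm:2k2k2k-2}, and for $s>\max\{a,b\}$ your valuation formula $v(T_k)=[k>\frac{p-1}{2}]+[k>a]+[k>b]-[k\geqslant p-s]$ shows every summand is divisible by $[p]^2$. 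But in the only substantive range, $\min\{a,b\}<s\leqslant\max\{a,b\}$, everything after your bookkeeping is conditional: you correctly observe that the mechanism of the paper (Lemma~\ref{lem:one}, the factorization \eqref{eq:factor-main} via Theorem~\ref{thm:important-thm}, and Andrews' formula \eqref{eq:andrews} as in Lemma~\ref{thm:factor-1}) breaks down because the terms with $p-s\leqslant k\leqslant\max\{a,b\}$ no longer vanish modulo $[p]$, and you then propose two routes without carrying out either. The second-order cancellation among the valuation-one terms \emph{is} the content of the conjecture, and it is established nowhere in your proposal; your closing sentences concede as much. Note also a structural obstacle to your preferred setup: Theorem~\ref{thm:important-thm} requires $s\leqslant n$, so with $n=\frac{p-1}{2}$ it cannot even be invoked once $s>\frac{p-1}{2}$ (which does occur, e.g.\ $p=11$, $m=3$, $r=1$, $s=6$), and the mod-$[p]^2$ analogue of Lemma~\ref{lem:one} for $\frac{p-1}{2}<k\leqslant p-1$ that your first route needs is precisely the missing ingredient, not a known tool.

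Two further points. First, your bookkeeping, which is sound, actually contradicts the paper's own remark following the conjecture, namely that for $s>\min\{a,b\}$ the congruence is ``obviously true'' because each summand is $\equiv 0\pmod{[p]^2}$: that termwise claim is valid only for $s>\max\{a,b\}$. For instance, with $p=5$, $m=3$, $r=1$, $s=2$ one has $a=3$, $b=1$, the parity condition holds, and the summands $k=2,3$ carry exactly one factor of $[5]$ (coming from $1-q^5$ in $(q^2;q^3)_k$, the factors $1-q^{15}$ in numerator and denominator cancelling for $k=3$); so genuine cancellation is required, which is presumably why the statement resisted the authors' methods. Second, two imprecisions in your write-up: the critical band is nonempty iff $b<s\leqslant a$ (after your relabelling $b\leqslant a$), not merely when $s>b$; and it is only among the tail terms $k>\frac{p-1}{2}$ that the band is the sole exception --- when $b<s\leqslant\frac{p-1}{2}$, every head term $s\leqslant k\leqslant \frac{p-1}{2}$ also has valuation exactly $1$, so the head can only be handled collectively, never termwise. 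None of this invalidates your diagnosis, but a diagnosis of the difficulty is not a resolution of it: the conjecture remains open after your proposal.
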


Note that, if $s>\min\{\langle -\frac{r}{m}\rangle_p, \langle -\frac{m-r}{m}\rangle_p\}$, then the congruence \eqref{eq:new-2mp-333}
is obviously true, since in this case each summand in the left-hand side is congruent to $0$ modulo $[p]^2$.

Taking $r=1$ and $m=3,4,6$ in \eqref{eq:new-2mp-333}, we get
\begin{conj}\label{conj:sunzw}
Let $p\geqslant 5$ be a prime and let $s\leqslant p-1$ be a nonnegative integer. Then
\begin{align*}
\sum_{k=s}^{p-1}{2k\brack k+s}_{q^3}\frac{(q;q^3)_k (q^{2};q^3)_{k} q^{3k} }{ (q^{6};q^{6})_k^2 }
&\equiv 0 \pmod{[p]^2},\quad\text{if }s\equiv \frac{1+(\frac{-3}{p})}{2}\pmod{2},\\
\sum_{k=s}^{p-1}{2k\brack k+s}_{q^4}\frac{(q;q^4)_k (q^{3};q^4)_{k} q^{4k} }{ (q^{8};q^{8})_k^2 }
&\equiv 0 \pmod{[p]^2},\quad\text{if }s\equiv \frac{1+(\frac{-2}{p})}{2}\pmod{2},\\
\sum_{k=s}^{p-1}{2k\brack k+s}_{q^6}\frac{(q;q^6)_k (q^{5};q^6)_{k} q^{6k} }{ (q^{12};q^{12})_k^2 }
&\equiv 0 \pmod{[p]^2},\quad\text{if }s\equiv \frac{1+(\frac{-1}{p})}{2}\pmod{2}.
\end{align*}
\end{conj}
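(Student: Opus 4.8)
The plan is to deduce Conjecture~\ref{conj:sunzw} from the general Conjecture~\ref{conj:2k2k2k-1} by specializing $r=1$ and $m\in\{3,4,6\}$ and invoking \eqref{eq:values}, exactly as the three displayed congruences of the Corollary are extracted from \eqref{eq:new-2mpp}. Writing $a=\langle-\tfrac{r}{m}\rangle_p$ and $b=\langle-\tfrac{m-r}{m}\rangle_p$, so that $a+b=p-1$ and hence $a\equiv b\pmod2$, I would first dispose of the two easy ranges of $s$. For $s\leqslant\min\{a,b\}$ the statement is precisely \eqref{eq:new-2mpp}, already proved. For $s>\max\{a,b\}$ a direct count of the $[p]$-adic valuation of each summand $T_k$ settles it: when $k\geqslant s>\max\{a,b\}\geqslant\tfrac{p-1}{2}$ each of $(q^m;q^m)_{2k}$, $(q^r;q^m)_k$ and $(q^{m-r};q^m)_k$ contributes a factor of $[p]$, against at most one factor of $[p]$ coming from $(q^m;q^m)_{k+s}$, so every term is already $\equiv0\pmod{[p]^2}$. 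The whole content therefore lies in the intermediate range $\min\{a,b\}<s\leqslant\max\{a,b\}$, where individual summands need not vanish to second order.

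For this range I would follow the architecture of the proof of Theorem~\ref{thm:2k2k2k-2}: apply Lemma~\ref{lem:one} on the head $s\leqslant k\leqslant\tfrac{p-1}{2}$, factor it through the $q$-Clausen identity of Theorem~\ref{thm:important-thm} as in \eqref{eq:factor-main}, and treat the tail $\tfrac{p+1}{2}\leqslant k\leqslant p-1$ separately. The new phenomenon is that, once $s>\min\{a,b\}$, the factor carrying the Pochhammer whose residue is the \emph{smaller} of $a,b$ vanishes modulo $[p]$ term by term, whereas its companion is a genuinely non-terminating truncation: the reduction in Lemma~\ref{thm:factor-1} now brings it only down to $\sum_{k=s}^{p-s-1}$ instead of the fully terminating $\sum_{k=s}^{\langle-\cdot\rangle_p}$ on which Andrews' evaluation \eqref{eq:andrews} applies. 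Thus the clean ``product of two sums, each $\equiv0\pmod{[p]}$'' mechanism degenerates, and what survives is a collection of valuation-one summands that must cancel to second order.

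Concretely, I would extract the leading $[p]$-contribution of each such term. Using $1-q^{pc}\equiv c(1-q)[p]\pmod{[p]^2}$ to linearize whichever single Pochhammer (or copy of $(q^m;q^m)_{2k}$) carries the solitary factor of $[p]$, one writes $T_k\equiv[p]\,\widetilde T_k\pmod{[p]^2}$ with $\widetilde T_k$ an explicit rational function, reducible modulo $[p]$ via $q^p\equiv1$. The goal then becomes the first-order congruence $\sum_k\widetilde T_k\equiv0\pmod{[p]}$, the sum running over all valuation-one indices of \emph{both} head and tail. This is emphatically not a pairwise involution $k\leftrightarrow p-k$: already for $p=11$, $m=3$, $r=1$, $s=4$ the surviving indices are $k=4,5,7$, and the relation to be proved is the genuinely three-term $\widetilde T_4+\widetilde T_5+\widetilde T_7\equiv0\pmod{[11]}$.

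The hard part will be establishing this first-order identity uniformly in $s$. It amounts to a ``derivative'' companion of Lemma~\ref{thm:factor-1}: an evaluation, modulo $[p]$, of the linear-in-$[p]$ coefficient of a truncated very-well-poised sum, for which the terminating $q$-Watson formula \eqref{eq:andrews} only controls the zeroth order. I expect the right route is to return to Theorem~\ref{thm:important-thm} and expand \emph{both} Clausen factors to first order in $[p]$ simultaneously, so that the unknown cross term is pinned down by the exact identity rather than evaluated in isolation; this would likely require re-running the partial-fraction and finite-difference identities underlying Theorem~\ref{thm:important-thm} one order deeper. Producing such a refined summation formula, and checking that it forces the multi-term cancellation throughout the intermediate range, is the crux; the remaining work—the valuation bookkeeping, the specialization to $m=3,4,6$, and the passage through \eqref{eq:values}—is routine.
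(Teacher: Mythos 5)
This statement is Conjecture~\ref{conj:sunzw}, which lives in the paper's concluding section as an \emph{open problem}: the paper contains no proof of it, only the observation that it is the case $r=1$, $m\in\{3,4,6\}$ of Conjecture~\ref{conj:2k2k2k-1} (itself unproved) and that $q\to 1$ recovers Z.-W. Sun's theorem. So there is no paper proof to compare against, and your proposal must be judged on its own. On those terms it is a correct and careful \emph{partial} reduction, not a proof. Writing $a=\langle-\frac{r}{m}\rangle_p$, $b=\langle-\frac{m-r}{m}\rangle_p$, the two outer ranges are indeed settled: for $s\leqslant\min\{a,b\}$ the statement is exactly the proven congruence \eqref{eq:new-2mpp} of Theorem~\ref{thm:2k2k2k-2} after translating the parity hypothesis through \eqref{eq:values}, and for $s>\max\{a,b\}$ your valuation count is right (the numerator carries three factors of $[p]$, the denominator factor $(q^m;q^m)_{k+s}$ exactly one). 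Your bookkeeping is in fact sharper than the paper's own remark after Conjecture~\ref{conj:2k2k2k-1}, which claims that for \emph{all} $s>\min\{a,b\}$ every summand is $\equiv 0\pmod{[p]^2}$; your example $p=11$, $m=3$, $r=1$, $s=4$, with surviving valuation-one indices $k=4,5,7$, shows that claim is false in the intermediate range, so the paper's remark cannot be invoked to close the case either.

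The genuine gap is exactly where you locate it: for $\min\{a,b\}<s<\max\{a,b\}$ nothing is proved. Your diagnosis of why the paper's machinery breaks there is accurate --- in the factorization \eqref{eq:factor-main} the factor carrying the smaller residue vanishes termwise modulo $[p]$, while for the companion factor the reduction of Lemma~\ref{thm:factor-1} stops at $\sum_{k=s}^{p-s-1}$, which no longer reaches the termination point $k=\max\{a,b\}$ (one needs $p-s-1\geqslant\max\{a,b\}$, i.e.\ $s\leqslant\min\{a,b\}$), so Andrews' formula \eqref{eq:andrews} does not apply and the product argument yields only one factor of $[p]$. But your proposed remedy --- a ``derivative'' companion of Lemma~\ref{thm:factor-1}, equivalently a first-order-in-$[p]$ refinement of Theorem~\ref{thm:important-thm} that forces the multi-term cancellation $\widetilde T_4+\widetilde T_5+\widetilde T_7\equiv 0\pmod{[11]}$ and its general analogue --- is only announced, never constructed, and you say so yourself. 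That missing summation formula is precisely the content of the open conjecture, so what you have is a correct reduction of Conjecture~\ref{conj:sunzw} to its genuinely open core (plus a worthwhile correction to the paper's remark), and it should be presented as such rather than as a proof.
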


It is clear that when $q\to 1$, Conjecture \ref{conj:sunzw} reduces to Z.-W. Sun's generalization of
\eqref{eq:RV-B1}--\eqref{eq:RV-B3} (see \cite[Theorem 1.3(i)]{Sun0}).

We conjecture  that Theorem \ref{thm:2mp2} and Corollary \ref{cor:two} can be further strengthened.
\begin{conj}
The congruences \eqref{eq:new-RV2}--\eqref{eq:new-RV4} also hold modulo $[p]^2$.
\end{conj}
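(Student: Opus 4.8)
The plan is to prove the sharper statement that the closed form \eqref{eq:new-2mp-2} of Theorem~\ref{thm:2mp2} already holds modulo $[p]^2$ under the hypothesis $p\equiv\pm1\pmod m$; specializing $m=3,4,6$ (together with the parity identities after \eqref{eq:values}) then yields \eqref{eq:new-RV2}--\eqref{eq:new-RV4} modulo $[p]^2$. Since the congruence is already known modulo $[p]$, I would refine the elementary proof of Theorem~\ref{thm:2mp2} by one power of $[p]$. The two conversions \eqref{eq:frac-bino-1} and \eqref{eq:frac-bino-2} are stated only modulo $[p]$ because each replaces $q^{ap}$ (with $a=(m\langle -\frac{r}{m}\rangle_p+r)/p$) by $1$, and $1-q^{ap}=(1-q)[p]\sum_{i=0}^{a-1}q^{ip}\equiv a(1-q)[p]\pmod{[p]^2}$ is divisible by exactly $[p]$. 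First I would make these replacements exact: in \eqref{eq:frac-bino-1} the substitution carries an additive error $1-q^{ap}$ in each of the $k$ numerator factors, and in \eqref{eq:frac-bino-2} an error $q^{mj-r}(q^{ap}-1)$. Expanding the products to first order in $[p]$ then upgrades \eqref{eq:frac-bino-1} and \eqref{eq:frac-bino-2} to congruences modulo $[p]^2$ of the shape (modulo-$[p]$ value)$\,\times\,(1+(1-q^{ap})E_k)$, where $E_k$ is an explicit sum of reciprocals $1/(1-q^{m(N+1-j)})$ with $N=\langle -\frac{r}{m}\rangle_p$.

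Substituting these refined identities into $\sum_{k=0}^{p-s-1}\frac{(q^r;q^m)_k (q^{m-r};q^m)_{k+s}}{(q^m;q^m)_k (q^m;q^m)_{k+s}}$ and discarding terms of order $[p]^2$, the leading part is exactly the sum evaluated in the proof of Theorem~\ref{thm:2mp2} by the $q$-binomial identity \eqref{eq:lem-qbino}, and reproduces the right-hand side $(-1)^{N}q^{r(m-r)(1-p^2)/(2m)}$. Writing $u_k$ for that modulo-$[p]$ summand, the first-order correction equals $(1-q^{ap})\sum_{k}u_k(E_k+E_k')$, and because $1-q^{ap}$ is a single factor of $[p]$, the congruence modulo $[p]^2$ is equivalent to the vanishing of $\sum_{k}u_k(E_k+E_k')$ modulo $[p]$. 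Evaluating this correction sum in closed form is the crux.

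I expect this last step to be the main obstacle. For $m=2$ one has $r=m-r=1$, the two $q$-shifted factorials coincide, and the summand collapses to central $q$-binomial coefficients; this is precisely why the modulo-$[p]^2$ congruence \eqref{eq:new-RV1} of Theorem~\ref{thm:2k2k-r} follows so cleanly from the key congruence Lemma~\ref{lem:one} and the exact evaluation Lemma~\ref{lem:two}. For $m=3,4,6$ we have $r\ne m-r$, the factors $(q^r;q^m)_k$ and $(q^{m-r};q^m)_{k+s}$ no longer fuse into a single central $q$-binomial, this symmetry is lost, and Lemmas~\ref{lem:one} and \ref{lem:two} do not apply verbatim. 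The problem therefore reduces to finding an exact $q$-summation generalizing \eqref{eq:lem-qbino} to the asymmetric, $s$-shifted setting and forcing the correction sum to vanish modulo $[p]$ exactly when $p\equiv\pm1\pmod m$ — the same hypothesis that makes the exponent $r(m-r)(1-p^2)/(2m)$ an integer. In the limit $q\to1$ this vanishing is what Mortenson and Z.-H. Sun establish through Gauss--Jacobi sums and generalized Legendre polynomials; as no $q$-analogue of those tools is available here, the realistic route is to prove the needed identity directly by the partial-fraction and finite-difference machinery (the interpolation formulas \eqref{simpleLag}, \eqref{simpleNewton} and the vanishing relation \eqref{eq:q-diff}) that already drives Theorem~\ref{thm:important-thm}.

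As an alternative I would test whether the $q$-Clausen-type formula Theorem~\ref{thm:important-thm} can be brought to bear, since the Beukers-type triple sums it governs factor into two single sums and the Rodriguez--Villegas sums of \eqref{eq:new-RV2}--\eqref{eq:new-RV4} are morally square roots of such objects. The obstruction is that the factors in Theorem~\ref{thm:important-thm} carry the symmetric weight $1/\big((q;q)_{k-s}(q;q)_{k+s}\big)$, whereas the present summand is asymmetric, with shift $0$ in $(q^r;q^m)_k$ against shift $s$ in $(q^{m-r};q^m)_{k+s}$; one would first have to prove a suitably asymmetric companion of Theorem~\ref{thm:important-thm} before this route could close.
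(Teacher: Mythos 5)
You should first note what you are up against: the statement is not proved anywhere in the paper — it is stated and left as an open conjecture, and the ``sharper statement'' you propose to prove (that \eqref{eq:new-2mp-2} already holds modulo $[p]^2$ when $p\equiv\pm1\pmod m$) is itself precisely the paper's Conjecture~\ref{conj:fifi}, also left open. So your plan reduces one open conjecture to a stronger open conjecture, and then stops exactly where the difficulty begins: you yourself flag the closed-form evaluation of the first-order correction sum $\sum_k u_k(E_k+E_k')$ modulo $[p]$ as ``the main obstacle'' and offer only candidate tools (the interpolation formulas \eqref{simpleLag}, \eqref{simpleNewton}, the vanishing relation \eqref{eq:q-diff}, or an asymmetric companion of Theorem~\ref{thm:important-thm}) without carrying any of them out. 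That is a research program, not a proof; as it stands the key step is missing.

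There is also a concrete error in the reduction itself. The exact identity \eqref{eq:lem-qbino} evaluates your leading (modulo-$[p]$) part as $(-1)^{N}q^{-mN(N+1)/2}$ with $N=\langle -\frac{r}{m}\rangle_p$, whereas the target right-hand side of \eqref{eq:new-2mp-2} is $(-1)^{N}q^{r(m-r)(1-p^2)/(2m)}$. These two exponents are congruent modulo $p$ but are in general \emph{different integers}: for $m=3$, $r=1$, $p=7$ one has $N=2$, so $-mN(N+1)/2=-9$ while $r(m-r)(1-p^2)/(2m)=-16$. Since $q^{a}-q^{b}=q^{b}(q^{a-b}-1)$ is, for $a\equiv b\pmod p$ with $a\neq b$, a nonzero multiple of $[p]$ that is generically not a multiple of $[p]^2$, the two candidate right-hand sides differ by a nontrivial multiple of $[p]$. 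Consequently your assertion that the modulo-$[p]^2$ congruence is ``equivalent to the vanishing of $\sum_{k}u_k(E_k+E_k')$ modulo $[p]$'' is false: the correction sum must instead equal the specific nonzero quantity obtained by dividing the exponent discrepancy $q^{r(m-r)(1-p^2)/(2m)}-q^{-mN(N+1)/2}$ by the single factor of $[p]$ you extracted. Any completion of your plan must prove exactly that identity, and nothing in the paper's toolkit — Lemmas~\ref{lem:one} and \ref{lem:two}, the partial-fraction lemmas, or Theorem~\ref{thm:important-thm} — is shown to deliver it; this is presumably why the authors left \eqref{eq:new-RV2}--\eqref{eq:new-RV4} modulo $[p]^2$, and its generalizations, as conjectures.
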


\begin{conj}\label{conj:fifi}
Let $p$ be an odd prime and let $m$, $r$ be positive integers with $p\equiv\pm 1\pmod{m}$ and $r<m$.
Then for any integer $s$ with $0\leqslant s\leqslant \langle -\frac{m-r}{m}\rangle_p$,
there holds
\begin{align*}
\sum_{k=0}^{p-s-1}\frac{(q^r;q^m)_k (q^{m-r};q^m)_{k+s} }{(q^m;q^m)_k (q^m;q^m)_{k+s} }
\equiv (-1)^{\langle -\frac{r}{m}\rangle_p}q^{\frac{r(m-r)(1-p^2)}{2m}} \pmod{[p]^2}. 
\end{align*}
\end{conj}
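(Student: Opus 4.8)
The plan is to upgrade the mod $[p]$ argument behind Theorem~\ref{thm:2mp2} (specifically the evaluation \eqref{eq:new-2mp-2}) to a statement modulo $[p]^2$, taking as template the successful treatment of the $m=2$ case in the proof of Theorem~\ref{thm:2k2k-r}. First I would retain the substitution $a=\frac{m\langle -\frac rm\rangle_p+r}{p}$ used in the proof of Theorem~\ref{thm:2mp2}, so that $ap-r=m\langle -\frac rm\rangle_p$ and the factors $(q^r;q^m)_k$, $(q^{m-r};q^m)_{k+s}$ genuinely truncate the sum. The mod $[p]$ reductions \eqref{eq:frac-bino-1}--\eqref{eq:frac-bino-2} rest on the single replacement $q^{ap}\mapsto 1$; since $1-q^{ap}=(1-q^p)(1+q^p+\cdots+q^{(a-1)p})\equiv a(1-q)[p]\pmod{[p]^2}$ is a unit multiple of $[p]$, that replacement is valid only modulo $[p]$, and the entire task is to carry the resulting $O([p])$ corrections.

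The crux is to find mod $[p]^2$ refinements of \eqref{eq:frac-bino-1}--\eqref{eq:frac-bino-2}, modeled on Lemma~\ref{lem:one}. The guiding device is the exact factorization $(1-q^{p-c})(1-q^{p+c})+(1-q^c)^2q^{p-c}=(1-q^p)^2\equiv 0\pmod{[p]^2}$ exploited in Lemma~\ref{lem:one}: it trades a factor lying just beyond $q^p$ against a reflected factor times an explicit power of $q$, exactly modulo $[p]^2$. The analogous reflection about $q^{ap}$ is available because $(1-q^{ap})^2\equiv a^2(1-q)^2[p]^2\equiv 0\pmod{[p]^2}$, so in principle each Pochhammer factor can be reflected with a controlled quadratic error. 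The aim is then to rewrite the summand of \eqref{eq:new-2mp-2} so that, modulo $[p]^2$, it becomes a product of $q^m$-binomial coefficients of exactly the shape summed in Lemma~\ref{lem:two} (with $q\to q^m$), i.e.\ a terminating balanced sum.

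Finally I would evaluate that terminating sum in closed form by the $q$-Chu--Vandermonde/$q$-Pfaff--Saalsch\"utz machinery, which is precisely the role Lemma~\ref{lem:two} plays in the proof of Theorem~\ref{thm:2k2k-r}, where the analogous sum collapses to $(-1)^{(p-1)/2}q^{(1-p^2)/4}$. The hypothesis $p\equiv\pm1\pmod m$ should guarantee both that the relevant $q^m$-binomial indices are integers and that the quadratic exponent $\frac{r(m-r)(1-p^2)}{2m}$ is an integer, so that the closed form can be matched against the right-hand side together with the expected sign $(-1)^{\langle -\frac rm\rangle_p}$.

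The hard part will be the middle step. For $m=2$ the summand is exactly the central $q$-binomial ${2k\brack k}_{q^2}$ divided by $(-q;q)_{2k}$, which is what makes Lemma~\ref{lem:one} applicable verbatim; for general $m$ (with $r\neq m/2$) there is no such clean central-binomial form, and a bespoke identity — an exact, or mod $[p]^2$, evaluation of the truncated ${}_2\phi_1$/${}_3\phi_2$ running up to $k=p-s-1$ — would have to be discovered. Producing that exact terminating evaluation, rather than merely its mod $[p]$ shadow (which Theorem~\ref{thm:2mp2} already supplies through Andrews' formula \eqref{eq:andrews}), is the genuine obstacle, and is presumably why the assertion is recorded here only as a conjecture.
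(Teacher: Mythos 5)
You were asked to prove Conjecture~\ref{conj:fifi}, and it is worth stating plainly that the paper itself offers no proof of it: the authors record it as an open problem (they prove the weaker Theorem~\ref{thm:2mp2}, which gives \eqref{eq:new-2mp-2} only modulo $[p]$), so there is no argument in the paper to compare yours against. Your submission, to its credit, does not pretend otherwise: it is a strategy outline whose pivotal middle step --- a mod-$[p]^2$ refinement of \eqref{eq:frac-bino-1}--\eqref{eq:frac-bino-2} together with an exact terminating evaluation replacing Lemma~\ref{lem:two} --- is explicitly left undischarged. That step is not a routine verification but the entire content of the conjecture, so what you have written is a research plan, not a proof.

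The gap is, moreover, structural rather than merely technical. The engine of the $m=2$ case (Theorem~\ref{thm:2k2k-r}) is Lemma~\ref{lem:one}, whose proof rests on the identity $(1-q^{p-2j+1})(1-q^{p+2j-1})+(1-q^{2j-1})^2q^{p-2j+1}=(1-q^p)^2$: it converts a \emph{pair} of factors whose exponents sum to $2p$ into a square, exactly modulo $[p]^2$. Such pairs exist there because the numerator of ${\frac{p-1}{2}+k\brack 2k}_{q^2}$ is $\prod_{j=1}^k(1-q^{p-2j+1})(1-q^{p+2j-1})$, i.e.\ the exponents are arranged symmetrically about $p$. In the summand of Conjecture~\ref{conj:fifi} the factors of $(q^r;q^m)_k$ have exponents $\equiv r\pmod m$ and those of $(q^{m-r};q^m)_{k+s}$ have exponents $\equiv -r\pmod m$, so any cross pair has exponent sum $\equiv 0\pmod m$, while your proposed center satisfies $2ap=2m\langle -\frac{r}{m}\rangle_p+2r\equiv 2r\pmod m$; a symmetric pairing about $q^{ap}$ therefore exists only when $m\mid 2r$, which (as $r<m$) forces $r=m/2$, and after the substitution $q\mapsto q^{r}$ this is exactly the already-proved $m=2$, $r=1$ case. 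For every other $(m,r)$ there is no analogue of Lemma~\ref{lem:one} to invoke, and the $q$-Chu--Vandermonde evaluations of Lemma~\ref{lem:two} only enter \emph{after} such a reduction, so they cannot repair its absence. (Note also that the authors' own mod-$[p]^2$ result for general $(m,r)$, Theorem~\ref{thm:2k2k2k-2}, sidesteps this issue by a completely different mechanism --- the $q$-Clausen factorization of Theorem~\ref{thm:important-thm} producing a product of two quantities each divisible by $[p]$ --- which cannot yield the nonzero right-hand side required here.) So your plan correctly locates the obstruction but does not surmount it; the statement remains, as in the paper, a conjecture.
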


Like \cite[Conjecture 7.1]{GZ}, Conjecture \ref{conj:fifi} seems have a further generalization as follows:
\begin{conj}
Let $p$ be an odd prime and let $m$, $|r|$ be positive integers with $p\nmid m$ and $m\nmid r$.
Then there exists a unique integer $f_{p,m,r}$ such that, for any $0\leqslant s\leqslant \langle -\frac{m-r}{m}\rangle_p$, there holds
\begin{align*}
\sum_{k=0}^{p-s-1}\frac{(q^r;q^m)_k (q^{m-r};q^m)_{k+s} }{(q^m;q^m)_k (q^m;q^m)_{k+s} }
\equiv (-1)^{\langle -\frac{r}{m}\rangle_p}q^{f_{p,m,r}} \pmod{[p]^2}.
\end{align*}
Furthermore, the numbers $f_{p,m,r}$ satisfy the symmetry $f_{p,m,r}=f_{p,m,m-r}$ and the recurrence relation{\rm:}
\begin{align*}
f_{p,m,m+r}
=\begin{cases}
-f_{p,m,r},&\text{if $r\equiv 0\pmod{p},$}\\[5pt]
f_{p,m,r}-r, &\text{otherwise.}
\end{cases}
\end{align*}
\end{conj}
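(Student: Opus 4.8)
The plan is to separate the conjecture into three claims and establish them in order. Throughout write $\alpha=\langle -\frac{r}{m}\rangle_p$ and
\[
S_s(r)=\sum_{k=0}^{p-s-1}\frac{(q^r;q^m)_k (q^{m-r};q^m)_{k+s}}{(q^m;q^m)_k (q^m;q^m)_{k+s}}.
\]
The three claims are: (i) for $0\leqslant s\leqslant\langle -\frac{m-r}{m}\rangle_p$ there is an integer $f_{p,m,r}$, \emph{independent of} $s$, with $S_s(r)\equiv(-1)^{\alpha}q^{f_{p,m,r}}\pmod{[p]^2}$; (ii) $f_{p,m,r}=f_{p,m,m-r}$; and (iii) the stated recurrence. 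Uniqueness of $f_{p,m,r}$ is free: if $q^{f}\equiv q^{f'}\pmod{[p]^2}$ with $f>f'$, then $[p]^2\mid q^{f-f'}-1$ (as $q^{f'}$ is a unit), which is impossible since $q^{f-f'}-1$ is squarefree and hence not divisible by the square of the irreducible polynomial $[p]$. By the symmetry and recurrence, the existence statement (i) need only be proved for $0<r<m$, the remaining cases being reached from these.

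For (i) I would upgrade the modulo $[p]$ proof of Theorem~\ref{thm:2mp2} to modulo $[p]^2$. There the summand was rewritten, via \eqref{eq:frac-bino-1}--\eqref{eq:frac-bino-2} and the relation $q^{ap}\equiv1$ with $a=\frac{m\alpha+r}{p}$, as a product of two $q^m$-binomial coefficients, after which \eqref{eq:lem-qbino} collapsed the sum to $(-1)^\alpha q^{-m\alpha(\alpha+1)/2}$. Modulo $[p]^2$ I would replace $q^{ap}\equiv1$ by the second-order expansion $q^{ap}\equiv1-a(1-q)[p]$, which turns each factor into
\[
1-q^{mj+r-m}\equiv -q^{mj+r-m}\bigl(1-q^{ap-mj-r+m}\bigr)\Bigl(1+\tfrac{a(1-q)[p]}{1-q^{mj+r-m}}\Bigr)\pmod{[p]^2}.
\]
Multiplying over $j$ reproduces the two $q^m$-binomial coefficients of \eqref{eq:frac-bino-1}--\eqref{eq:frac-bino-2}, now multiplied by a correction $1+a(1-q)[p]\,H_k\pmod{[p]^2}$ with $H_k$ a $q$-harmonic sum of the shape $\sum_j(1-q^{mj+r-m})^{-1}$. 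The leading part is summed exactly by \eqref{eq:lem-qbino} and gives $(-1)^\alpha q^{-m\alpha(\alpha+1)/2}$; the remaining $[p]$-weighted $q^m$-binomial sum must then be evaluated, either by re-applying \eqref{eq:lem-qbino}/\eqref{eq:q-diff} after writing $H_k$ through $q$-binomial differences, or by summing the underlying terminating ${}_2\phi_1$ — which sits exactly at the $q$-Chu-Vandermonde point — and reducing the resulting ratio of $q$-shifted factorials modulo $[p]^2$. The target is to show $S_s(r)$ equals $(-1)^\alpha$ times a single power of $q$, with integer exponent independent of $s$; this defines $f_{p,m,r}$ and forces $f_{p,m,r}\equiv-m\alpha(\alpha+1)/2\pmod p$, recovering the explicit value $\frac{r(m-r)(1-p^2)}{2m}$ when $p\equiv\pm1\pmod m$.

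For the symmetry (ii), the sign already matches because $\alpha+\langle -\frac{m-r}{m}\rangle_p=p-1$ is even. For the exponent I would exhibit an explicit transformation sending $S_s(r)$ to some $S_{s'}(m-r)$ modulo $[p]^2$, obtained either by reversing the order of summation in the terminating inner ${}_2\phi_1$ (interchanging its two numerator parameters) or by the parameter-interchange symmetry of the $q$-Chu-Vandermonde evaluation, and then check that the attendant powers of $q$ agree as integers. For the recurrence (iii), I would use the Pochhammer shift $(q^{m+r};q^m)_k=\frac{1-q^{r+mk}}{1-q^r}(q^r;q^m)_k$, together with the companion factor $(q^{-r};q^m)_{k+s}$, to express $S_s(m+r)$ through sums with parameter $r$. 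The case split is dictated by $\alpha$: from $r\equiv-m\alpha$ one gets $m+r\equiv m(1-\alpha)$, so $\langle -\frac{m+r}{m}\rangle_p=\alpha-1$ when $p\nmid r$ (that is $\alpha\geqslant1$) and $\langle -\frac{m+r}{m}\rangle_p=p-1$ when $p\mid r$ (that is $\alpha=0$). Tracking the single extra factor $\frac{1-q^{r+mk}}{1-q^r}$ modulo $[p]^2$ in these two regimes should produce $f_{p,m,m+r}=f_{p,m,r}-r$ and $f_{p,m,m+r}=-f_{p,m,r}$ respectively; the mod $p$ identity $-m(\alpha-1)\alpha/2\equiv-m\alpha(\alpha+1)/2-r$ and the vanishing at $\alpha=0$ confirm the shape.

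The main obstacle will be the modulo $[p]^2$ analysis in (i). Concretely, one must control the $q$-harmonic correction sums $H_k$ modulo $[p]$ and show that, after the outer summation, they assemble into exactly one extra power of $q$ rather than a generic element of $\Q[q]/[p]^2$. Equivalently, the delicate new point is $s$-\emph{independence modulo} $[p]^2$: the closed forms produced by $q$-Chu-Vandermonde carry $s$-dependent ratios of $q$-shifted factorials (such as $\frac{(q^{m-r};q^m)_s(q^m;q^m)_\alpha}{(q^m;q^m)_s(q^{m(s+1)};q^m)_\alpha}$) that are $\equiv1$ only modulo $[p]$ in the proof of Theorem~\ref{thm:2mp2}; showing that their $[p]$-parts cancel against the tail contributions from $\alpha<k\leqslant p-s-1$ (which vanish modulo $[p]$ but not modulo $[p]^2$) is precisely where a genuinely new identity — in the spirit of Lemma~\ref{lem:one} or the $q$-Clausen-type formula of Theorem~\ref{thm:important-thm} — is likely to be needed.
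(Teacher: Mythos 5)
You are attempting to prove what is not a theorem of the paper but its final open conjecture: the authors give no proof, only a table of numerically computed values of $f_{p,m,r}$, and they explicitly leave even weaker statements open (Conjecture~\ref{conj:fifi}, and the mod-$[p]^2$ strengthening \eqref{eq:new-RV2}--\eqref{eq:new-RV4} of Corollary~\ref{cor:two}). So your proposal must stand on its own as a complete proof, and it does not: it is a plan whose decisive step is missing. The peripheral parts are sound --- the uniqueness argument (squarefreeness of $q^n-1$ over $\Q$ forbids $[p]^2\mid q^n-1$), the expansion $q^{ap}=(1-(1-q)[p])^a\equiv 1-a(1-q)[p]\pmod{[p]^2}$, the observation that Theorem~\ref{thm:2mp2} forces $f_{p,m,r}\equiv -m\alpha(\alpha+1)/2\pmod p$, and the case split in (iii) via $\alpha=0\Leftrightarrow p\mid r$ are all correct. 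But claim (i), that each $S_s(r)$ is congruent modulo $[p]^2$ to a single signed power of $q$ whose exponent is an integer independent of $s$, is the entire content of the conjecture, and your treatment stops exactly where the difficulty begins.

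Concretely, after the second-order rewriting of each factor you must (a) evaluate modulo $[p]$ the $q$-harmonic-weighted binomial sums involving $H_k=\sum_j(1-q^{mj+r-m})^{-1}$, and (b) show that their contribution combines with the tail terms $\alpha<k\leqslant p-s-1$ (each divisible by $[p]$ but not by $[p]^2$) into $(-1)^\alpha\bigl(q^{f}-q^{-m\alpha(\alpha+1)/2}\bigr)$ for a single integer $f$ that does not move with $s$. Neither \eqref{eq:lem-qbino} nor \eqref{eq:q-diff} applies to the weighted sums, since $H_k$ is not a polynomial in $q^{-k}$ of bounded degree, and the $q$-Chu--Vandermonde evaluations you invoke produce $s$-dependent ratios of $q$-shifted factorials that are $1$ only modulo $[p]$, which is precisely what needs to be improved. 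Claims (ii) and (iii) are likewise only directional sketches, and (iii) cannot even be formulated until (i) supplies the integers it is supposed to relate. Your own closing admission that ``a genuinely new identity \dots is likely to be needed'' is accurate: without that identity the proposal establishes nothing beyond what Theorem~\ref{thm:2mp2} already gives modulo $[p]$, and the conjecture remains open.
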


Here we give some values of $f_{p,m,r}$:
\begin{align*}
&f_{3,2,1}=-2,\ f_{3,2,3}=-3,\ f_{3,2,5}=3,\  f_{3,2,7}=-2,\
f_{3,2,9}=-9,\ f_{3,2,11}=9,\ f_{3,2,13}=-2,\   \\
&f_{5,3,1}=-8,\ f_{5,3,2}=-8,\ f_{5,3,4}=-9,\  f_{5,3,5}=-10,\
f_{5,3,7}=-13,\ f_{5,3,8}=10,\ f_{5,3,10}=-20,\   \\
&f_{5,3,11}=2,\ f_{5,3,13}=20,\  f_{5,3,14}=-9,\  f_{5,3,16}=7,\
f_{5,3,17}=-23,\ f_{5,3,19}=-9,\  f_{5,8,1}=-23,  \\
&f_{7,9,1}=-54,\ f_{7,9,2}=-21,\ f_{7,9,4}=-37,\  f_{7,9,5}=-37,\
f_{7,9,7}=-21,\ f_{7,9,8}=-54,  \\
&f_{7,9,10}=-55,\ f_{7,9,11}=-23\ f_{7,9,13}=-41,\  f_{7,9,14}=-42,\
f_{7,9,16}=-22,\ f_{7,9,17}=-33.
\end{align*}

\vskip 3mm
\noindent{\bf Acknowledgments.} The first author was partially
supported by the Fundamental Research Funds for the Central Universities and
the National Natural Science Foundation of China (grant 11371144).

\end{document}